\documentclass[letterpaper,article,oneside,11pt]{memoir}

\usepackage{amsmath}
\usepackage{amssymb}
\usepackage{amsthm}
\usepackage{mathrsfs}
\usepackage{lmodern}
\usepackage[T1]{fontenc}
\usepackage[utf8]{inputenc}
\usepackage{enumitem}

\usepackage{tikz-cd}
\usetikzlibrary{cd}
\usepackage{pgfplots}
\pgfplotsset{compat=1.5.1}

\settrims{0pt}{0pt}
\setheadfoot{0.5in}{0.5in}
\setulmarginsandblock{1.0in}{1.0in}{*}
\setlrmarginsandblock{1.0in}{1.0in}{*}
\checkandfixthelayout

\usepackage[backend=biber,style=alphabetic,giveninits,url=false,doi=false,maxbibnames=50,maxcitenames=50,maxnames=50]{biblatex}
\addbibresource{ietSkewProducts.bib}
\defbibheading{bibliography}[Bibliography]{\section*{#1}\markboth{#1}{#1}}

\counterwithout{section}{chapter}
\counterwithout{equation}{chapter}
\numberwithin{equation}{section}

\renewcommand{\maketitle}{
 {\centering\textbf{\thetitle}\par
  \vspace{0.5em}
  \centering{\theauthor}\par 
  \vspace{2em}
 }
}

\setsecheadstyle{\bfseries}

\setsecnumdepth{subsubsection}
\setsecnumformat{\csname the#1\endcsname. }
\setsubsecheadstyle{\normalfont\bfseries}
\setaftersubsecskip{-0.5em}
\setsubsubsecheadstyle{\normalfont\bfseries}
\setaftersubsubsecskip{-0.5em}

\OnehalfSpacing
\setlength{\intextsep}{0.5em}
\setlength{\floatsep}{0.5em}
\setlength{\textfloatsep}{0.5em}

\newtheorem{theorem}[equation]{Theorem}

\newtheorem*{claim}{\textit{Claim}}
\newtheorem{proposition}[equation]{Proposition}
\newtheorem{corollary}[equation]{Corollary}
\newtheorem{lemma}[equation]{Lemma}
\newtheorem{fact}[equation]{Fact}

\theoremstyle{definition}

\newtheorem{question}[equation]{Question}

\newcommand{\metric}{\mathsf{d}}
\newcommand{\nbar}{|\!|}
\newcommand{\intd}{\,\mathrm{d}}
\newcommand{\haar}{\mathsf{m}}
\newcommand{\ess}{\mathsf{Ess}}
\newcommand{\nudge}{\mathsf{nudge}}
\newcommand{\coll}{\mathsf{coll}}
\DeclareMathOperator{\symdiff}{\triangle}
\DeclareMathOperator{\lp}{L}
\DeclareMathOperator{\cont}{\mathsf{C}}
\DeclareMathOperator{\contc}{\cont_\mathsf{c}}

\newcommand{\Cesaro}{Ces\`{a}ro}
\newcommand{\define}[1]{\textbf{#1}}
\renewcommand{\emptyset}{\varnothing}

\title{Ergodicity of skew products over linearly recurrent IETs}
\author{Jon Chaika \and Donald Robertson}

\begin{document}

\maketitle

\begin{abstract}
We prove that the skew product over a linearly recurrent interval exchange transformation defined by almost any real-valued, mean-zero linear combination of characteristic functions of intervals is ergodic with respect to Lebesgue measure.
\end{abstract}

\section{Introduction}
\label{sec:introduction}

Let $T$ be an ergodic measure-preserving transformation on a probability space $(X,\mathscr{B},\mu)$.
Given a measurable function $f : X \to \mathbb{R}$ one can consider the \define{skew product} $T_f$ on $X \times \mathbb{R}$ defined by $T_f(x,t) = (Tx,t + f(x))$ for all $x \in X$ and all $t \in \mathbb{R}$.
It is immediate that $T_f$ preserves $\mu \otimes \nu$ where $\nu$ is Lebesgue measure on $\mathbb{R}$.
Atkinson~\cite{MR0419727} proved that $T_f$ is recurrent with respect to $\mu \otimes \nu$ if and only if $f$ has zero mean and Schmidt~\cite[Theorem~5.5]{MR0578731} proved that $T_f$ is conservative if and only if $f$ has zero mean.
It is therefore natural to ask whether $T_f$ is ergodic with respect to $\mu \otimes \nu$ when $f$ has zero mean.

In this paper we are interested in the situation where $T$ is an interval exchange transformation.
We remind the reader that an \define{interval exchange transformation} is specified by a permutation $\pi$ of $\{1,\dots,b\}$ for some $b \in \mathbb{N}$ and by positive lengths $\lambda_1,\dots,\lambda_b$ that sum to 1.
Given such data one defines a map $T : [0,1) \to [0,1)$ by
\[
Tx = x - \sum_{j < i} \lambda_j + \sum_{\pi j < \pi i} \lambda_j
\]
for all $x \in I_i$ where $I_i = [\lambda_0 + \cdots + \lambda_{i-1},\lambda_0 + \cdots + \lambda_i)$ for each $1 \le i \le b$ and $\lambda_0 = 0$.
All interval exchange transformations preserve Lebesgue measure on $[0,1)$.
A permutation $\pi$ on $\{1,\dots,b\}$ is \define{irreducible} if there is not $1 \le k < b$ such that $\pi(\{1,\dots,k\}) = \{1,\dots,k\}$.
Throughout, we only consider interval exchange transformations defined by permutations that are irreducible.

For interval exchange transformations with $b = 2$ (i.e.\ circle rotations) and varying classes of skewing function $f$ the associated skew product $T_f$ was shown to be ergodic by Oren~\cite{MR693356}, Hellekalek and Larcher~\cite{MR853454}, Pask~\cite{MR1046174} and Conze and Pi\k{e}kniewska \cite{MR3265295}.
For special IETs on more intervals Conze and Fr\k{a}czek \cite{MR2770454} proved ergodicity for skew products by certain piecewise linear functions.
Negative results also occur, as Fr\k{a}czek and Ulcigrai \cite{MR3232007} showed typical non-ergodicity for a family of IETs with skewing functions that depend on the intervals of the IETs (which were considered for their relation to certain billiards). 

In this paper we prove that, for linearly recurrent interval exchange transformations (an analogue of badly approximable rotations for interval exchange transformations) the skew product $T_f$ is ergodic for almost every step function $f$ with zero mean.
(By a \define{step function} we mean a linear combination of characteristic function of intervals.)

\begin{theorem}
\label{thm:mainTheorem}
Let $T$ be a linearly recurrent interval exchange transformation.
For almost every mean-zero step function $f : [0,1) \to \mathbb{R}$ the skew product $T_f$ is ergodic.
\end{theorem}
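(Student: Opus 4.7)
The strategy is to invoke Schmidt's characterization: $T_f$ is ergodic with respect to $\mu \otimes \nu$ if and only if its group of essential values $E(T_f) \subset \mathbb{R}$ equals all of $\mathbb{R}$. Since any closed subgroup of $\mathbb{R}$ is either $\{0\}$, a cyclic subgroup $\alpha\mathbb{Z}$, or all of $\mathbb{R}$, it suffices to show that for almost every mean-zero step function $f$ the group $E(T_f)$ is dense in $\mathbb{R}$; equivalently, that for every $t \in \mathbb{R}$, every set $A \subset [0,1)$ of positive Lebesgue measure, and every $\epsilon > 0$, there is some $n \in \mathbb{Z}$ with
\[
\haar(A \cap T^{-n} A \cap \{x : |f^{(n)}(x) - t| < \epsilon\}) > 0,
\]
where $f^{(n)} = f + f \circ T + \cdots + f \circ T^{n-1}$.

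To produce such $n$ I would exploit the Rokhlin-tower structure coming from linear recurrence. Via Rauzy-Veech induction, linear recurrence produces a sequence of nested subintervals $J_n$ whose first-return maps to $J_n$ partition $[0,1)$ into columns of a Rokhlin tower with heights distorted only by a uniformly bounded factor. Over each tower, the Birkhoff sum $f^{(r(x))}(x)$ up to the first return time $r(x)$ is piecewise constant on a bounded-complexity partition of $J_n$, and each constant value depends linearly on the heights of $f$, with coefficients recording how many times the corresponding tower column visits each interval of constancy of $f$. The bounded distortion ensures that these columns have non-negligible measure, giving ready-made candidate sets for verifying the essential-value condition.

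The heart of the argument is to pass from this qualitative picture to the quantitative claim that, for almost every choice of step-function parameters, the countable family of Birkhoff-sum values $\{S_{n,i}\}$ accumulates densely in $\mathbb{R}$. I would parameterize mean-zero step functions by their discontinuity locations $a_1 < \cdots < a_{k-1}$ together with heights $(c_1,\ldots,c_k)$ restricted to the hyperplane $\sum c_i (a_i - a_{i-1}) = 0$, and run a Fubini argument: fix the discontinuity locations generically and consider, for each Rauzy-Veech stage $n$ and each column $i$, the linear map $\Phi_{n,i} : (c_1,\ldots,c_k) \mapsto S_{n,i}$. The task reduces to showing that for almost every height vector in the mean-zero hyperplane the orbit $\{\Phi_{n,i}(c)\}$ is dense in $\mathbb{R}$, which is a Diophantine-type statement amenable to a Weyl- or Borel-Cantelli-type argument once one knows the coefficient vectors are not asymptotically trapped in any low-dimensional subspace.

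The main obstacle is exactly this last non-degeneracy condition. Birkhoff sums along Rauzy-Veech return times are governed by the Kontsevich-Zorich cocycle, and ruling out near-coboundary behavior of the coefficient vectors is normally done via full hyperbolicity of that cocycle, whereas linear recurrence delivers only bounded combinatorics. I expect the core technical step to be showing that, under mere linear recurrence, the coefficient vectors coming from successive Rauzy-Veech stages visit the coefficient simplex in a sufficiently rich way to force density of $\{\Phi_{n,i}(c)\}$ for Lebesgue-almost every height vector, and a parallel genericity step verifying that the discontinuity locations $a_1 < \cdots < a_{k-1}$ can be chosen so that the orbit of $\{a_i\}$ under $T$ is disjoint enough from the Rauzy-Veech cut points to make the above coefficients well-behaved. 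Once both are in hand, the closed-subgroup trichotomy closes out the proof that $E(T_f) = \mathbb{R}$, hence ergodicity.
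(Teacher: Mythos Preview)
Your proposal is a strategy sketch rather than a proof, and it leaves open precisely the step that carries all the weight. You reduce to showing that the countable family of tower Birkhoff sums $\{S_{n,i}\}$ is dense in $\mathbb{R}$ for almost every height vector, and you correctly identify that this requires a non-degeneracy statement about how the coefficient vectors $\Phi_{n,i}$ distribute under the induction; but you then defer that statement to the Kontsevich--Zorich cocycle and concede that linear recurrence gives only bounded combinatorics rather than hyperbolicity. That is the whole difficulty, and nothing in the proposal addresses it. There is also a second, more conceptual gap: density of the \emph{values} $\{S_{n,i}\}$ is not the same as density of the \emph{essential values}. The essential-value criterion must be verified for every positive-measure set $A$, not just for tower columns; you would need an additional argument (e.g.\ a rigidity sequence along which $f^{(n_k)}$ is nearly constant on a set of definite measure and converges to the target $t$) to upgrade ``some Birkhoff sum is near $t$'' to ``$t \in E(T_f)$''.

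The paper proceeds quite differently. Rather than aiming at an arbitrary dense set of essential values, it proves that each \emph{jump} $y_{i+1}-y_i$ of $f$ is an essential value (Theorem~\ref{thm:jumpsEssential}); since for almost every $f$ the jumps generate a dense subgroup, this suffices. The mechanism is a Ratner-style drift argument: one follows two nearby base points $x$ and $z$ whose orbits straddle a single discontinuity of $f$, so that their cocycle values differ by exactly one jump for a definite proportion of times (the ``friends'' conditions \ref{fr:1}--\ref{fr:5}). This is carried out on the induced system $S_{f,B}$ on $[0,1)\times[-B,B]$, and one shows the resulting ergodic components are not mutually singular under the vertical shift $V^v$ (Theorem~\ref{thm:gettingInvariance}). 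The substitute for Denjoy--Koksma is a power-saving bound $\bigl|\sum_{n<N} f(T^n x)\bigr| \le N^\gamma$ coming from linear recurrence (Theorem~\ref{thm:powerSaving}), fed into a quantitative Atkinson theorem to guarantee \ref{fr:5}. The ``almost every $f$'' enters through a Lebesgue density-point argument in the parameter space $\mathcal{C}_d$: one nudges the discontinuities of $f$ to achieve \ref{fr:3}--\ref{fr:4} and checks the nudge is small relative to the scale at which \ref{fr:5} persists. None of this uses Rauzy--Veech induction or the cocycle; it relies only on the elementary consequences of linear recurrence (orbit separation and density at scale $1/n$).
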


The terms ``linear recurrent'' and ``almost every'' in Theorem~\ref{thm:mainTheorem} require some explanation.
We first recall the definition of linear recurrence for interval exchange transformations.
Let $T$ be an interval exchange transformation and let $\beta_i = \lambda_1 + \cdots + \lambda_i$ for all $1 \le i \le b-1$.
Put $D = \{ \beta_1,\dots, \beta_{b-1} \}$.
One says that $T$ satisfies the \define{infinite distinct orbits condition} if $D \cap (T^n)^{-1} D = \emptyset$ for all $n \in \mathbb{N}$.
Keane~\cite{MR0357739} proved that the infinite distinct orbits condition implies $T$ is minimal in the sense that every point has dense orbit.
An interval exchange transformation $T$ satisfying the infinite distinct orbits condition is said to be \define{linearly recurrent} if
\[
c_3 = \inf \{ n \eta(n) : n \in \mathbb{N} \} > 0
\]
where $\eta(n)$ is the length of the smallest interval in the partition $D \cup \cdots \cup (T^n)^{-1} D$ of $[0,1)$.
Linear recurrence implies the following statement: that there are constants $c_1,c_2 > 0$ such that every finite orbit $x,Tx,\dots,T^{n-1}x$ is $c_1/n$ dense and $c_2/n$ separated.

The condition ``almost every'' in Theorem~\ref{thm:mainTheorem} refers to a particular parameterization of mean-zero step functions we now describe.
Every step function $f : [0,1) \to \mathbb{R}$ with $d > 0$ discontinuities can be written in the form
\begin{equation}
\label{eqn:simpleFunction}
f = y_1 1_{[0,x_1)} + \cdots + y_{d+1} 1_{[x_1 + \cdots + x_d, 1)}
\end{equation}
for some $y_1,\dots,y_{d+1}$ in $\mathbb{R}$ with $y_i \ne y_{i+1}$ for all $1 \le i \le d$ and some $x_1,\dots,x_{d+1}$ in $(0,1)$ that sum to 1.
By the \define{jumps} of any such $f$ we mean the values $y_2 - y_1,\dots,y_{d+1} - y_d$.
The manifold
\[
\mathcal{C}_d
=
\left\{
(x_1,\dots,x_{d+1},y_1,\dots,y_{d+1}) \in (0,1)^{d+1} \times \mathbb{R}^{d+1}
:
\begin{aligned}
x_1 y_1 + \cdots + x_{d+1} y_{d+1} = 0\\
x_1 + \cdots + x_{d+1} = 1\\
y_i \ne y_{i+1} \textrm{ for all } 1 \le i \le d
\end{aligned}
\right\}
\]
parameterizes all mean-zero step functions $f : [0,1) \to \mathbb{R}$ with $d$ discontinuities.
We equip $\mathcal{C}_d$ with the metric $\metric$ induced by the $\ell^\infty$ metric on $\mathbb{R}^{2d+2}$.
In Theorem~\ref{thm:mainTheorem} and throughout the paper, a statement is true for almost every mean-zero step function $f$ if, for every $d \in \mathbb{N}$ the points in $\mathcal{C}_d$ for which the statement is false is a null set for the Lebesgue measure class on $\mathcal{C}_d$.

Our methods also apply to mean-zero step functions $f : [0,1) \to \mathbb{Z}$.
The necessary modifications are to (i) replace $\mathbb{R}^{d+1}$ with $\mathbb{Z}^{d+1}$ in the definition of $\mathcal{C}_d$ and equip $(0,1)^{d+1} \times \mathbb{Z}^{d+1}$ with the natural Lebesgue measure; (ii) redefine nudges (cf.\ Section~\ref{sec:f3f4byNudging} below) to remain $\mathbb{Z}$ valued.
As well as being of intrinsic interest, the resulting skew products are related with $\mathbb{Z}$ covers of compact translation surfaces.
Given such a cover $p : \tilde{M} \to M$ and a direction $\theta$ the first return map on any line segment $\Lambda$ transverse to $\theta$ is an interval exchange transformation $T$ on $\Lambda$.
The first return dynamics on $p^{-1}(\Lambda)$ is equivalent to a skew-product $T_f$ where $f : \Lambda \to \mathbb{Z}$ is a step function.
It follows from \cite{MR2049831} that, for a zero Lebesgue measure but full Hausdorff dimension set of $\theta$ the corresponding interval exchange transformation $T$ is linearly recurrent.
Given such a direction $\theta$ the induced skew product $T_f$ and in turn the flow on $\tilde{M}$ in the direction $\theta$ are both recurrent provided $f$ has mean zero (cf.\ \cite{MR3025751}).

The proof of Theorem~\ref{thm:mainTheorem} is outlined in Section~\ref{sec:skewProducts} and the details are given in the subsequent sections.
We mention here the following questions, in which we are very interested.
\begin{question}
Is Theorem~\ref{thm:linearRecurrenceFriends} true with the assumption of linear recurrence weakened to unique ergodicity (or maybe even just minimality)? 
\end{question}
\begin{question}
Let $f = 1_{[0,\frac 1 2)} - 1_{[\frac{1}{2}, 1)}$.
Is $T_f$ ergodic as a $\mathbb{Z}$-valued skew product for almost every $T$?
\end{question}
\begin{question}
Let $f(x)=\cos(2\pi x)$.
Is $T_f$ ergodic for almost every interval exchange transformation on at least three intervals?
\end{question}

J.\ Chaika is supported in part by NSF grants DMS-135500 and
DMS-1452762, the Sloan foundation and a Warnock chair.
D.\ Robertson is grateful for the support of the NSF via grants DMS-1246989 and DMS-1703597.
This project (in fact a more ambitious one) began in 2005 as a joint project between the first named author and Pascal Hubert. 
We thank Pascal Hubert for many helpful conversations.

\section{Proving ergodicity}
\label{sec:skewProducts}

Fix a minimal interval exchange transformation $T$ on $[0,1)$ and a measurable function $f : [0,1) \to \mathbb{R}$.
As with all skew products $T_f$ is said to be \define{recurrent} if, for every $B \in \mathscr{B}$ with $\mu(B) > 0$ and every $\epsilon > 0$ one has
\[
\mu( B \cap (T^n)^{-1} B \cap \{ x \in [0,1) : T_f^n (x,0) \in [0,1) \times (-\epsilon,\epsilon) \} ) > 0
\]
for some $n \in \mathbb{Z} \setminus \{0\}$.
Atkinson~\cite{MR0419727} proved that $T_f$ is recurrent if and only if $f$ has zero mean.
Since $T$ is minimal it then follows from \cite[Theorem~5.5]{MR0578731} that if $f$ has zero mean then $T_f$ is conservative.
It is therefore reasonable to ask -- assuming $f$ has zero mean -- whether $T_f$ is ergodic with respect to Lebesgue measure $\haar$ on $[0,1) \times \mathbb{R}$.
To answer this question one considers the $\mathbb{R}$ action $V$ defined on $[0,1) \times \mathbb{R}$ by $V^v(x,t) = (x,t+v)$, which preserves $\haar$ and commutes with $T_f$.
Write $\mathscr{Z}_f$ for the $\sigma$-algebra of $T_f$ invariant Borel subsets of $[0,1) \times \mathbb{R}$.
As a consequence of \cite[Theorem~5.2]{MR0578731} and \cite[Corollary~5.4]{MR0578731} the measure $\haar$ is ergodic for $T_f$ if and only if the closed subgroup
\[
\ess(f) = \{ v \in \mathbb{R} : \haar(B \symdiff (V^v)^{-1} B) = 0 \textrm{ for all } B \in \mathscr{Z}_f \}
\]
is all of $\mathbb{R}$.
The members of $\ess(f)$ are the \define{essential values} of $f$.
Our main result is therefore a consequence of the following theorem.

\begin{theorem}
\label{thm:jumpsEssential}
Let $T$ be a linearly recurrent interval exchange transformation.
For almost every mean-zero step function $f : [0,1) \to \mathbb{R}$ each of its jumps is an essential value of $T_f$.
\end{theorem}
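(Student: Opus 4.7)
The plan is to verify the standard criterion for essential values: $v \in \ess(f)$ if and only if, for every Borel $B \subseteq [0,1)$ with $\mu(B) > 0$ and every $\epsilon > 0$, there exists $n \in \mathbb{Z} \setminus \{0\}$ with
\[
\mu(B \cap T^{-n}B \cap \{x : |S_n f(x) - v| < \epsilon\}) > 0,
\]
where $S_n f = \sum_{k=0}^{n-1} f \circ T^k$. Fix $d \geq 1$ and a jump index $j \in \{1,\dots,d\}$ of some $f \in \mathcal{C}_d$, and write $v = y_{j+1} - y_j$. First I would reduce matters to a countable family of pairs $(B,\epsilon)$, with $B$ ranging over dyadic intervals and $\epsilon$ over a rational null sequence, so that it suffices to show that, for each fixed such pair, the set of $f \in \mathcal{C}_d$ failing the criterion with that specific $(B,\epsilon,v)$ is Lebesgue null.

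The geometric input is that, for a step function $f$ with discontinuities $\xi_i = x_1 + \cdots + x_i$, the Birkhoff sum $S_n f$ is constant on the atoms of the partition generated by the preimages $\{T^{-k}\xi_i : 0 \leq k < n,\ 1 \leq i \leq d\}$, and crossing a single preimage $T^{-k}\xi_i$ changes $S_n f$ by $\pm(y_{i+1} - y_i)$. Linear recurrence forces each of these atoms to have length at least a constant multiple of $c_3/n$. The heart of the argument is then: choose a return time $n$ large enough that some small subinterval $J \subseteq B$ satisfies $T^n J \subseteq B$ with $T^n$ acting as a translation on $J$; two adjacent atoms separated by a single preimage of $\xi_j$ then differ in their constant value of $S_n f$ by exactly $v$. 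By \emph{nudging} the parameter $x_j$ of $f$ (with compensating perturbations in the remaining coordinates to stay on $\mathcal{C}_d$), one can arrange the relevant preimage $T^{-k}\xi_j$ to lie precisely where it realises this configuration on a set of positive measure inside $B \cap T^{-n}B$.

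To deduce an almost-every statement from this pointwise construction I would apply Fubini on $\mathcal{C}_d$: slicing by fixing all parameters other than those being nudged, the set of slices for which the essential-value criterion fails at $(B,\epsilon,v)$ is covered by slices each missing an open neighbourhood produced by the nudge, hence has measure zero on each slice. A countable intersection over the family of $(B,\epsilon)$ pairs and a finite union over jumps $j$ gives the full statement. The hard part will be the nudge itself: one must ensure simultaneously that after perturbation the target atom retains positive intersection with both $B$ and $T^{-n}B$, and that the combinatorial structure of the cocycle $S_n f$ on that atom is preserved, so that the Birkhoff sum there still equals the expected value to within $\epsilon$. Linear recurrence, expressed by the lower bound $c_3 \leq n\eta(n)$, is exactly what prevents these atoms from collapsing faster than the scale of the allowed perturbation; obtaining a quantitative version of this control that is uniform over the relevant parameters is the technical core of the proof.
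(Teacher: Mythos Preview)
Your proposal has a genuine gap at its core. You correctly observe that two adjacent atoms of the partition by preimages of discontinuities differ in their constant value of $S_n f$ by exactly the jump $v$. But the essential-value criterion requires you to find a positive-measure set where $|S_n f - v| < \epsilon$, i.e.\ where $S_n f$ is \emph{close to $v$ in absolute terms}. Knowing the difference between adjacent atoms is $v$ tells you nothing about the actual value of $S_n f$ on either of them: you would need one of the two atoms to satisfy $|S_n f| < \epsilon$, and your argument supplies no mechanism for this. For circle rotations this is exactly what the Denjoy--Koksma inequality provides at the denominators $q_n$ of the continued-fraction expansion, and the paper explicitly remarks that no analogue is expected for general IETs. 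Nudging the discontinuity location does not help here, because a perturbation of $f$ of size $O(2^{-k})$ can change $S_n f$ by $O(1)$ over $n \sim 2^k$ steps, so you cannot simultaneously (i) position the preimage where you want it and (ii) keep $S_n f$ pinned near zero on the neighbouring atom.

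The paper takes a fundamentally different route that sidesteps this obstruction. Rather than seeking a single good time $n$, it works with the transformation $S_{f,B}$ induced by $T_f$ on $[0,1) \times [-B,B]$ and studies its ergodic decomposition. The key criterion (Theorem~\ref{thm:gettingInvariance}) is that $v$ is essential once a positive-measure set of points is generic for an ergodic component measure $\mu_{f,B,(x,t)}$ not singular to its vertical translate $V^v \mu_{f,B,(x,t)}$. This non-singularity is established via a Ratner-style shadowing argument: one follows a pair of nearby points $(x,t)$ and $(z,t)$, arranged so that exactly one preimage of the discontinuity separates them, and shows their $S_{f,B}$-orbits stay $V^v$-close for a \emph{definite proportion} of an orbit segment (property~\ref{fr:5}). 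That proportion is obtained from a quantitative Atkinson theorem combined with a power-saving Birkhoff-sum bound $|S_N f| \le N^\gamma$ (Appendix~\ref{apdx:quantitativeUniqueErgodicity}), not from any pointwise smallness of $S_n f$. The nudging and Fubini ideas you describe do appear in the paper, but only to arrange the combinatorial conditions \ref{fr:3}--\ref{fr:4}; the analytic content is carried by \ref{fr:5} and its stability under perturbation, which your outline lacks.
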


\begin{proof}[Proof of Theorem~\ref{thm:mainTheorem} assuming Theorem~\ref{thm:jumpsEssential}]
For almost every mean-zero step function $f$ its jumps generate a dense subgroup of $\mathbb{R}$.
Therefore $\ess(f)$ is dense in $\mathbb{R}$.
\end{proof}

Other works, for example  \cite{MR3265295}, also prove that the jumps of the step function are essential values.
They consider step function skew products over rotations $R$ of the circle, for which one can always find infinitely many times $q_n$ such that
\begin{itemize}
\item
(Denjoy-Koksma) $\left| \displaystyle\sum_{i=0}^{q_n-1} f(R^ix) \right| \leq \mathsf{Var}(f)$
\item
$\underset{n \to \infty}{\lim}d(R^{q_n}x,x)=0$
\end{itemize}
both hold for all $x$.
One then seeks to show there are pairs of level sets of $\sum_{i=0}^{q_n-1}f(R^ix)$ of definite measure where the values of $\sum_{i=0}^{q_n-1}f(R^ix)$ differ by the size of particular jump discontinuities of $f$.
In short, one obtains invariance by looking at sets of definite measure at particular times.
We do not suspect that something like the Denjoy-Koksma inequality holds in our context.
As a substitute, we show that the size of the jumps of the skewing function are essential values by following a pair of nearby points whose values under the skew differ by the size of a jump discontinuity of $f$ for a defnite proportion of an orbit segment.
This approach is outlined below and carried out in Section~\ref{sec:friendsGivesInvariance}.
Such arguments go back at least to Ratner \cite{MR717825}.

In order to prove Theorem~\ref{thm:jumpsEssential} we study for some $B > 0$ the transformation $S_{f,B}$ induced by $T_f$ on the space $X_B := [0,1) \times [-B,B]$.
This is defined almost everywhere because $T_f$ is recurrent.
Normalized Lebesgue measure $\haar_B$ on $X_B$ is $S_{f,B}$ invariant, so $\haar_B$ almost every point $(x,t)$ is generic for an ergodic $S_{f,B}$ invariant probability measure $\mu_{f,B,(x,t)}$ on $X_B$.
The following theorem (proved in Appendix~\ref{apdx:gettingInvariance}) relates vertical invariance of the measures $\mu_{f,B,(x,t)}$ with the essential values of $T_f$.

\begin{theorem}
\label{thm:gettingInvariance}
Let $T$ be an ergodic interval exchange transformation. 
Fix $B > 0$.
Suppose that there is $v \in \mathbb{R}$ such that an $\haar_B$ positive measure set of $(x,t)$ in $X_B$ is generic for an $S_{f,B}$ invariant probability measure $\mu_{f,B,(x,t)}$ that is not singular with respect to $V^v \mu_{f,B,(x,t)}$.
Then every $T_f$ invariant set is $V^v$ invariant.
\end{theorem}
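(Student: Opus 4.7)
Let $A \subset [0,1) \times \mathbb{R}$ be a $T_f$-invariant Borel set. The plan is to show $\haar(A \symdiff V^v A) = 0$ by combining the ergodic decomposition of $\haar$ under $T_f$ with the commutation $V^s T_f = T_f V^s$. Write $\mu_p = \mu_{f,B,p}$. For $\haar_B$-a.e.\ $p \in X_B$, extend $\mu_p$ via the Kakutani tower over $X_B$ to a $\sigma$-finite $T_f$-invariant ergodic measure $\tilde\mu_p$ on $[0,1) \times \mathbb{R}$ with $\tilde\mu_p|_{X_B} = \mu_p$. Let $M$ be the space of proportionality classes $[\tilde\mu_p]$ equipped with the pushforward $\lambda = \pi_* \haar$ along the map $\pi: p \mapsto [\tilde\mu_p]$. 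Because $V^v$ commutes with $T_f$, each $V^v \tilde\mu_p$ is again $\sigma$-finite, $T_f$-invariant, and ergodic, so $V$ descends to a $\lambda$-preserving $\mathbb{R}$-action on $M$.

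The crux is to show that this induced $V$-action on $(M, \lambda)$ is ergodic. A $V$-invariant measurable subset of $M$ pulls back via $\pi$ to a subset of $[0,1) \times \mathbb{R}$ that is both $V$-invariant --- hence of the form $B \times \mathbb{R}$ --- and $T_f$-invariant, which forces $TB = B$; ergodicity of $T$ on $[0,1)$ then makes $B$ trivial. The fixed-point set $\mathrm{Fix}(V^v) = \{[\tilde\mu] \in M : V^v[\tilde\mu] = [\tilde\mu]\}$ is $V$-invariant because $V^v$ commutes with every $V^s$, and the standard dichotomy that two $\sigma$-finite ergodic invariant measures are either mutually singular or proportional translates the hypothesis into $\lambda(\mathrm{Fix}(V^v)) > 0$. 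Ergodicity of $V$ on $(M, \lambda)$ then upgrades this to $\mathrm{Fix}(V^v)$ being $\lambda$-conull, so $V^v \tilde\mu_p$ is proportional to $\tilde\mu_p$ for $\haar$-a.e.\ $p$.

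Proportional $\sigma$-finite ergodic measures share the same null sets. Both $A$ and $V^{-v} A$ are $T_f$-invariant, so each is either $\tilde\mu_p$-null or $\tilde\mu_p$-conull; the coincidence of null sets forces them to have the same status, whence $\tilde\mu_p(A \symdiff V^{-v} A) = 0$ for $\haar$-a.e.\ $p$. Integrating against the ergodic decomposition yields $\haar(A \symdiff V^{-v} A) = 0$, equivalently $\haar(A \symdiff V^v A) = 0$. The main technical obstacle I anticipate is making the $\sigma$-finite ergodic decomposition of $\haar$ rigorous --- in particular pinning down compatible normalizations of the $\tilde\mu_p$ when the $T_f$-return time to $X_B$ is not $\mu_p$-integrable --- and cleanly verifying the pullback correspondence between $V$-invariant subsets of $M$ and joint $V$-and-$T_f$-invariant subsets of $[0,1) \times \mathbb{R}$.
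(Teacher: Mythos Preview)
Your proposal is correct and follows essentially the same line as the paper: obtain an ergodic decomposition of $\haar$ under $T_f$ into $\sigma$-finite ergodic measures, observe that $V^v$ permutes the ergodic components, upgrade the positive-measure non-singularity hypothesis to an almost-everywhere statement using joint $T$- and $V$-invariance of the ``good'' set together with ergodicity of $T$ on $[0,1)$, and then integrate against the decomposition.

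The only notable packaging difference is that the paper invokes Schmidt's $\sigma$-finite ergodic decomposition theorem directly (its Corollary~\ref{cor:decompositionSchmidt}) rather than building $\tilde\mu_p$ from $\mu_p$ via the Kakutani tower; this sidesteps exactly the normalization issue you flag at the end. The paper then proves separately (Lemmas~\ref{lem:transEquivar} and~\ref{lem:ietEquivar}) that $V^b p_{\psi(x,t)} \sim p_{\psi(x,t+b)}$ and $T p_{\psi(x,t)} \sim p_{\psi(Tx,t)}$, and phrases the upgrade (Lemma~\ref{lem:allSame}) as a direct argument on the set $H = \{(x,t) : p_{\psi(x,t)} \not\perp V^v p_{\psi(x,t)}\}$ being $T$- and $V$-invariant, rather than as ergodicity of an induced $V$-action on a quotient space $M$. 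Your formulation via $(M,\lambda)$ is equivalent and arguably cleaner conceptually, but the paper's route avoids having to set up $M$ as a standard Borel space and verify measurability of the map $p \mapsto [\tilde\mu_p]$.
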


We now describe how Theorem~\ref{thm:gettingInvariance} will be used to prove Theorem~\ref{thm:jumpsEssential}.
Fix an interval exchange transformation $T$.
Given a pair $(x,t) \in [0,1) \times \mathbb{R}$, a mean-zero step function $f : [0,1) \to \mathbb{R}$ and $B > 0$, say that $(x,t)$ and $f$ are \define{right friends} at $B$ if there are constants $\beta > 0$, $\delta > 0$ such that, for every discontinuity $p$ of $f$ there is $K \subset \mathbb{N}$ infinite such that all of the following properties hold for all $k \in \mathbb{N}$.
\begin{enumerate}[label=\textup{\textbf{F\arabic*}}.,ref=\textbf{\textup{F\arabic*}}]
\item\label{fr:1}
For all $0 \le i < 2^{k}$ the transformation $T^i$ is continuous on $[x,x+\frac{2 \delta}{2^k}]$.
\item\label{fr:2}
The family $\{ T^i [x,x+\frac{2\delta}{2^k}] : 0 \le i < 2^{k} \}$ of intervals is pairwise disjoint.
\item\label{fr:3}
There is $0 \le i < 2^{k-1}$ with $p \in T^i[x,x+\frac{\delta}{2^k}]$.
\item\label{fr:4}
No other discontinuity of $f$ belongs to $\displaystyle\bigcup_{i=0}^{2^k-1} T^i [x, x+\tfrac{2\delta}{2^k}]$.
\item\label{fr:5}
$\displaystyle{
\sum_{n=2^{k-1}}^{2^k-1} 1_{[-B,B]} \left( t + \sum_{i=0}^{n-1} f(T^i x) \right)
\ge
\beta \sum_{n=0}^{2^k - 1} 1_{[-B,B]} \left( t + \sum_{i=0}^{n-1} f(T^i x) \right)
}$.
\end{enumerate}
Note that only \ref{fr:5} refers to $t$.
We say that $(x,t)$ and $f$ are \define{left friends} at $B$ if the above is true with all occurrences of $[x,x+\frac{2\delta}{2^k}]$ and $[x,x+\frac{\delta}{2^k}]$ replaced by $[x-\frac{2\delta}{2^k},x]$ and $[x-\frac{\delta}{2^k},x]$ respectively.
Declare $(x,t)$ and $f$ to be \define{friends} at $B$ if they are either left friends at $B$ or right friends at $B$.
Theorem~\ref{thm:jumpsEssential} is now a consequence of the following two results.

\begin{theorem}
\label{thm:friendsGivesInvariance}
Let $T$ be an ergodic interval exchange transformation and let $f$ be a mean-zero step function whose jump discontinuities generate a dense subgroup of $\mathbb{R}$.
If
\begin{equation}
\label{eqn:friendsGivesInvariance}
\haar_B( \{ (x,t) \in X_B : f \textrm{ and } (x,t) \textrm{ are friends at } B \}) \to 1
\end{equation}
as $B \to \infty$ then $T_f$ is ergodic with respect to Lebesgue measure on $[0,1) \times \mathbb{R}$.
\end{theorem}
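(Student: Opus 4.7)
The plan is to prove that every jump $j_p$ of $f$ is an essential value of $T_f$. Since $\ess(f)$ is a closed subgroup of $\mathbb{R}$ and the jumps of $f$ generate a dense subgroup by hypothesis, this will force $\ess(f) = \mathbb{R}$ and so the ergodicity of $T_f$ follows. By Theorem~\ref{thm:gettingInvariance}, to show $j_p \in \ess(f)$ it suffices to exhibit some $B > 0$ and a positive $\haar_B$-measure set of points $(x,t) \in X_B$ that are $S_{f,B}$-generic and whose ergodic measures $\mu_{f,B,(x,t)}$ are not mutually singular with $V^{j_p}\mu_{f,B,(x,t)}$. By the hypothesis \eqref{eqn:friendsGivesInvariance} together with the pointwise ergodic theorem, for $B$ large the set of $S_{f,B}$-generic friends at $B$ has $\haar_B$-measure close to $1$, so I would fix such an $(x,t)$ and a discontinuity $p$ of $f$ with jump $j_p$.

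For each $k$ in the set $K$ of the friends data associated with $p$, the conditions \ref{fr:1}--\ref{fr:4} organize $[x, x+2\delta/2^k]$ as the base of a disjoint tower of height $2^k$ in which $p$ is the only $f$-discontinuity encountered, appearing at some iterate $i_0 = i_0(k) < 2^{k-1}$ in the image of the left half $[x, x+\delta/2^k]$. I would choose a sister point $y_k$ in the right half $(x+\delta/2^k, x+2\delta/2^k)$ so that $x$ and $y_k$ lie on opposite sides of the preimage of $p$ at step $i_0$. Since each $T^i$, $0 \le i < 2^k$, acts as a single translation on the base, $T^i y_k - T^i x = y_k - x$ throughout; together with \ref{fr:4} and the step structure of $f$ this yields $S_n f(y_k) - S_n f(x) = \pm j_p$ for every $i_0 < n < 2^k$, the sign determined by the orientation. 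Lifting to the skew product gives the shadowing identity $T_f^n(y_k, t) = V^{\pm j_p} T_f^n(x, t) + (y_k - x,\, 0)$ for all $n \in [2^{k-1}, 2^k)$.

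Set $E_k = \{ n \in [2^{k-1}, 2^k) : T_f^n(x,t) \in X_B \}$ and let $N_k$ be the number of $X_B$-visits of $(x,t)$ in $[0, 2^k)$; condition \ref{fr:5} gives $|E_k| \ge \beta N_k$. The empirical measures $\nu_k = \tfrac{1}{N_k} \sum_{n \in E_k} \delta_{T_f^n(x,t)}$ and $\sigma_k = \tfrac{1}{N_k} \sum_{n \in E_k} \delta_{T_f^n(y_k, t)}$ each have total mass at least $\beta$, and the shadowing above forces $\sigma_k - V^{\pm j_p} \nu_k \to 0$ in the weak-$\ast$ topology because each pair of $\delta$-masses differs by a shift of size $|y_k - x| \le 2\delta/2^k \to 0$. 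On the other hand $\nu_k$ is dominated by the full $X_B$-empirical measures of $(x,t)$, which converge to $\mu_{f,B,(x,t)}$ by genericity. A weak-$\ast$ subsequential limit $\nu$ along $k \in K$ therefore satisfies $\nu \le \mu_{f,B,(x,t)}$ with $\|\nu\| \ge \beta$, and its companion $V^{\pm j_p} \nu$ is likewise dominated by $V^{\pm j_p} \mu_{f,B,(x,t)}$.

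The step I expect to be most delicate is to extract the non-singularity of $\mu_{f,B,(x,t)}$ and $V^{j_p}\mu_{f,B,(x,t)}$ from this pair of shadowed subprobabilities. My plan is to run the same analysis simultaneously on a larger strip $X_{B''}$ with $B'' > B + |j_p|$, so that both $X_B$ and $V^{j_p} X_B$ sit inside $X_{B''}$ and the $T_f$-orbits of $(x,t)$ and $(y_k, t)$ can be compared on this common space: the orbits agree for $n \le i_0(k)$ and are $V^{\pm j_p}$-shadowed along a definite-density set of times $E_k$. This should force the ergodic measure $\mu_{f,B'',(x,t)}$ to have a piece of mass at least $\beta$ that coincides with its $V^{j_p}$-image, which, under the standard proportionality between $\mu_{f,B''}$ and $\mu_{f,B}$ obtained by restriction to $X_B$, descends to the required non-singular overlap of $\mu_{f,B,(x,t)}$ with $V^{j_p}\mu_{f,B,(x,t)}$ and completes the argument.
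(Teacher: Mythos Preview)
Your shadowing setup and the empirical measures $\nu_k,\sigma_k$ are exactly the right objects, and the paper follows the same Ratner-style scheme. The gap is in the paragraph you yourself flag as delicate. From $\nu\le\mu_{f,B,(x,t)}$ you only get $V^{j_p}\nu\le V^{j_p}\mu_{f,B,(x,t)}$, which is tautological and says nothing about non-singularity of $\mu_{f,B,(x,t)}$ with its $V^{j_p}$-translate. What you need is that $\sigma_k$ (the empirical measure built from the \emph{sister} orbit) is asymptotically controlled by $\mu_{f,B,(x,t)}$ itself, not by its translate. Passing to a wider strip $X_{B''}$ does not supply this: the orbit of $(y_k,t)$ on $X_{B''}$ agrees with that of $(x,t)$ only up to time $i_0(k)<2^{k-1}$ and is $V^{j_p}$-shifted thereafter, so its full empirical measure is an unknown convex combination and you have no a priori reason for $(y_k,t)$ to be generic for $\mu_{f,B'',(x,t)}$ (or for anything at all, since $y_k$ varies with $k$).

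The paper closes this gap by forcing the sister point to be well behaved. Using Lusin's theorem it fixes a compact set $L\subset X_{B-b}$ (with $b=|j_p|$) on which $(x,t)\mapsto\mu_{f,B,(x,t)}$ is continuous, and using Egoroff a set $G_4(B,\eta)$ on which the Birkhoff averages converge uniformly. One then restricts to $(x,t)$ that are density points of $L\cap G_4(B,\eta)$ on their horizontal slice, which guarantees the sister point $z_i$ can be chosen in $L\cap G_4(B,\eta)$ as well. Now the finite-time averages of $(z_i,t)$ are close to $\mu_{f,B,(z_i,t)}$ (uniformly, by $G_4$), which in turn is close to $\mu_{f,B,(x,t)}$ (by continuity on $L$ and $z_i\to x$). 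Combining this with your shadowing identity yields $\int g\,\mathrm{d}\mu_{f,B,(x,t)}\le C\int V^{-j_p}g\,\mathrm{d}\mu_{f,B,(x,t)}$ for all $g\in\contc(X_{B-b})$, and this inequality is what actually rules out mutual singularity. The collar estimates \eqref{eqn:collarChebyshev} are also needed here to reconcile the different visit counts to $X_{B-b}$, $X_B$ and $X_{B+b}$; your sketch suppresses these, but they are routine once the density-point/Lusin mechanism is in place.
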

The proof of Theorem~\ref{thm:friendsGivesInvariance} is given in Section~\ref{sec:friendsGivesInvariance}.
It is proved by showing that for all jumps $v$ of $f$ the measure $\mu_{f,B,(x,t)}$ is not singular with respect to $V^v \mu_{f,B,(x,t)}$ for a positive measure set of $(x,t)$.
By Theorem \ref{thm:gettingInvariance} this implies Theorem \ref{thm:friendsGivesInvariance}. 

The next theorem guarantees that the hypothesis of Theorem~\ref{thm:friendsGivesInvariance} are satisfied when $T$ is a linearly recurrent interval exchange transformation - together with Theorem~\ref{thm:friendsGivesInvariance} it concludes the proof of Theorem~\ref{thm:mainTheorem}.

\begin{theorem}
\label{thm:linearRecurrenceFriends}
Fix a linearly recurrent interval exchange transformation $T$.
For almost every mean-zero step function $f$ we have
\begin{equation}
\label{eqn:friendsGoal}
\haar_B( \{ (x,t) \in X_B : f \textrm{ and } (x,t) \textrm{ are friends at } B \}) \to 1
\end{equation}
as $B \to \infty$.
\end{theorem}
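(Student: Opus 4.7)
The plan is to verify F1--F5 in three groups corresponding to the three kinds of input: F1--F2 depend only on the geometry of $T$ and are handled by linear recurrence; F3--F4 depend on the positions of the discontinuities of $f$ and are handled by the nudging argument of Section~\ref{sec:f3f4byNudging}; and F5 depends on $t$ and is handled by conservativity of $T_f$. Throughout we fix a small constant $\delta > 0$ depending on $c_3$ and the number $d$ of discontinuities of $f$.

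For F1 and F2, let $\mathcal{P}_N$ denote the partition of $[0,1)$ into maximal intervals on which $T^0,\dots,T^{N-1}$ are all continuous. Linear recurrence gives each cell of $\mathcal{P}_N$ length at least $c_3/N$, while the cell count is at most $(b-1)N+1$. For $N = 2^k$, the set $A_k$ of $x$ lying within $2\delta/2^k$ of an endpoint of $\mathcal{P}_{2^k}$ has Lebesgue measure $O(b\delta)$ uniformly in $k$. For $x \notin A_k$ the interval $[x, x+2\delta/2^k]$ lies in a single cell, so $T^0,\dots,T^{2^k-1}$ are continuous on it (F1) and its forward images are pairwise disjoint Rokhlin tower levels (F2). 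Since $|A_k^c| \ge 1 - O(b\delta)$ uniformly in $k$, the set $\limsup_k A_k^c$ of $x$ for which F1--F2 hold at infinitely many scales has measure at least $1 - O(b\delta)$.

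For F3 and F4, fix $x$ and $k$ with $x \notin A_k$. The outer tower $\bigcup_{i=0}^{2^k-1}T^i[x, x + 2\delta/2^k]$ has Lebesgue measure $2\delta$, and the inner sub-tower $\bigcup_{i=0}^{2^{k-1}-1}T^i[x, x+\delta/2^k]$ has measure $\delta/2$. The nudging construction perturbs the positions $x_j$ of the discontinuities of $f$ (compensating via the $y_j$-values to preserve the mean-zero constraint). Conditional on $x$, the probability in the nudge parameter that some discontinuity $p$ of $f$ lies in the inner sub-tower while the remaining $d$ discontinuities all avoid the outer tower is at least $(\delta/2)(1 - 2d\delta)^d > 0$, provided $\delta$ is chosen small relative to $d$. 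A Borel--Cantelli argument along a sufficiently sparse subsequence of scales, together with Fubini on $\mathcal{C}_d$, then yields F3--F4 at infinitely many $k$ for almost every $f \in \mathcal{C}_d$ and Lebesgue-a.e.\ $x$; the argument is repeated separately for each discontinuity $p$ to produce the corresponding subsequence $K(p)$.

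For F5 and the synthesis, the Birkhoff ergodic theorem applied to $S_{f,B}$ on $X_B$ yields, for $\haar_B$-a.e.\ $(x,t)$, a strictly positive occupation density of the $T_f$-orbit in $[-B,B]$; splitting the Birkhoff sum over $[0, 2^k)$ into its two halves gives F5 with $\beta = 1/3$ for all sufficiently large $k$. Intersecting the three full-measure conditions gives the friends property for $\haar_B$-a.e.\ $(x,t)$, and letting $B \to \infty$ (with $\delta$ fixed small) ensures that F5 is non-vacuous on a set of $\haar_B$-mass approaching $1$, yielding \eqref{eqn:friendsGoal}. The principal obstacle is the nudging step: one must coordinate the perturbations so that F3 and F4 hold along a common infinite subsequence of scales for every discontinuity of $f$, measurably in the parameter space $\mathcal{C}_d$, while remaining compatible with the geometric good set produced for F1--F2.
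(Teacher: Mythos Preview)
Your treatment of \ref{fr:5} is the essential gap. The Birkhoff theorem for $S_{f,B}$ controls \Cesaro{} averages along the \emph{induced} orbit, not the visit count
\[
V(N) = \sum_{n=0}^{N-1} 1_{[-B,B]}\!\left(t+\sum_{i=0}^{n-1} f(T^i x)\right)
\]
along the $T_f$ orbit. In the regime of interest the ambient invariant measure is infinite, so $V(N)/N \to 0$ almost everywhere; there is no ``positive occupation density'' to split in half, and the claim that \ref{fr:5} holds with $\beta = 1/3$ for all large $k$ is false in general. The paper obtains \ref{fr:5} by an entirely different route: a power-saving bound $|\sum_{n<N} f(T^n x)| \le N^\gamma$ (Theorem~\ref{thm:powerSaving}, using linear recurrence in an essential way) feeds into a quantitative Atkinson argument (Theorem~\ref{thm:quantitativeAtkinson}) to give $V(N) \ge N^{1-\gamma-\epsilon}$; then a simple calculus lemma (Lemma~\ref{lem:poly many}) converts polynomial growth of $V$ into \ref{fr:5} at \emph{infinitely many} (not all) $k$. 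None of this is recoverable from Birkhoff alone.

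This gap propagates to the synthesis. Because \ref{fr:5} only holds along an infinite subsequence $K$ depending on $(f,(x,t))$, you must arrange \ref{fr:3}--\ref{fr:4} at the \emph{same} $k \in K$, and your separate Borel--Cantelli argument for \ref{fr:3}--\ref{fr:4} gives no control over which scales succeed. The paper resolves this by reversing the logic: for a fixed $(x,t)$ and each $k \in K$, nudge $f$ by $O(2^{-k})$ to a function $g$ satisfying \ref{fr:3}--\ref{fr:4} at that $k$ (Proposition~\ref{prop:closeF3F4}); then show \ref{fr:5} is stable under such perturbations (Theorem~\ref{thm:perturbPoint}, which requires the collar estimates and is a substantial step you omit entirely). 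This produces, inside every small ball around $f$ in $\mathcal{C}_d$, a sub-ball of definite proportion on which \ref{fr:1}--\ref{fr:5} all hold at some $k$; hence $f$ is not a density point of the bad set, which therefore has measure zero. Your Borel--Cantelli sketch also lacks the independence needed for the second Borel--Cantelli lemma (the towers at different scales are strongly correlated), and passing to a sparse subsequence does not fix this. Finally, the paper's \ref{fr:1}--\ref{fr:2} argument (Lemma~\ref{lem:cond 1}) is sharper than yours: it holds for \emph{every} $x$ and \emph{every} $k$ on at least one side, which is exactly what the density-point scheme needs.
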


The proof of Theorem~\ref{thm:linearRecurrenceFriends} is based on a density points argument with the following steps.
The details are given in Section~\ref{sec:linearRecurrenceFriends}.
\begin{enumerate}
\item
(Section~\ref{sec:fr1fr2always})
Properties \ref{fr:1} and \ref{fr:2} always hold either on the left or the right.
\item
(Section~\ref{sec:f3f4byNudging})
For any $x,f$ and $k$ we can perturb $f$ to satisfy \ref{fr:3} and \ref{fr:4}.
\item
(Section~\ref{sec:magic})
For all $f$ and almost every $(x,t)$ (with $|t|<B$) there exists infinitely many $k$ satisfying \ref{fr:5}.
\item
(Section~\ref{sec:perturbations})
Small perturbations in $f$ preserve \ref{fr:5}.
\item
(Section~\ref{sec:linearRecurrenceFriendsProof})
The preceding steps imply Theorem~\ref{thm:linearRecurrenceFriends}.
\end{enumerate}


We conclude this section with some preparatory remarks that will be used implicitly in the proofs of the above results.
Firstly, given $d \in \mathbb{N}$ and $D \in \mathbb{N}$ write $\mathcal{C}_{d,D}$ for the set of points in $\mathcal{C}_d$ with $|y_i| \le D$ for all $1 \le i \le d+1$.

\begin{lemma}
\label{lem:measurability}
The map
\begin{equation}
\label{eqn:measurability}
\begin{aligned}
\mathcal{C}_{d,D} \times [0,1) \times \mathbb{R}
&
\to
[0,1) \times \mathbb{R}
\\
(f,(x,t))
&
\mapsto
T_f(x,t)
\end{aligned}
\end{equation}
is measurable, and
\[
(f,(x,t)) \mapsto \phi(T_f^n(x,t))
\]
is measurable for every $n$ in $\mathbb{Z}$ and every measurable function $\phi$ on $[0,1) \times \mathbb{R}$.
\end{lemma}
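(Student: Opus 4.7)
The plan is to reduce the claim to joint Borel measurability of the evaluation map $(f,x) \mapsto f(x)$, since the first coordinate of $T_f(x,t)=(Tx,\,t+f(x))$ does not depend on $f$ and $T$ is a piecewise translation, hence Borel measurable in $x$ alone. Thus the work is to show that, under the parameterization $\theta=(x_1,\dots,x_{d+1},y_1,\dots,y_{d+1})\in\mathcal{C}_{d,D}$, the function $(\theta,x)\mapsto f_\theta(x)$ is Borel on $\mathcal{C}_{d,D}\times[0,1)$.

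First I would write $f_\theta=\sum_{j=1}^{d+1} y_j\,1_{J_j(\theta)}$ where $J_j(\theta)=[x_1+\cdots+x_{j-1},\,x_1+\cdots+x_j)$ and check that the set
\[
E_j=\bigl\{(\theta,x)\in\mathcal{C}_{d,D}\times[0,1):x_1+\cdots+x_{j-1}\le x<x_1+\cdots+x_j\bigr\}
\]
is Borel, since it is cut out by finitely many continuous inequalities in the coordinates $(\theta,x)$. Consequently each product $(\theta,x)\mapsto y_j\,1_{E_j}(\theta,x)$ is Borel (the coordinate $y_j$ is continuous in $\theta$), and the finite sum $(\theta,x)\mapsto f_\theta(x)$ is Borel. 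Joint measurability of $(\theta,(x,t))\mapsto(Tx,\,t+f_\theta(x))$ then follows from the measurability of $T$ on $[0,1)$ and standard compositional properties of Borel maps.

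For the second assertion I would argue by induction on $|n|$. The case $n=0$ is trivial, the case $n=1$ is the first part, and for the inductive step one writes
\[
T_f^{n+1}(x,t)=T_f\bigl(T_f^n(x,t)\bigr),
\]
expressing $T_f^{n+1}$ as the composition of the Borel map $(f,(x,t))\mapsto (f,T_f^n(x,t))$ (Borel by inductive hypothesis, with the first coordinate left unchanged) followed by the Borel map from the first part. For negative $n$ one uses that the IET $T$ is invertible off a finite set, so $T^{-1}$ is defined off a null Borel set and is again piecewise translation; the translation $t\mapsto t-f(T^{-1}x)$ in the fiber is measurable jointly in $(f,x,t)$ by the same argument as above. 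Finally, for arbitrary measurable $\phi$ on $[0,1)\times\mathbb{R}$, the map $(f,(x,t))\mapsto \phi(T_f^n(x,t))$ is the composition of a Borel map with a measurable function, hence measurable.

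I do not expect any genuine obstacle here: the content is entirely formal joint-measurability bookkeeping. The only point requiring a moment of care is checking that the Borel structure induced on $\mathcal{C}_{d,D}$ by the ambient $\ell^\infty$ metric is the same as the one induced from the product Borel structure on $(0,1)^{d+1}\times[-D,D]^{d+1}$, so that the coordinate functions $\theta\mapsto x_j$ and $\theta\mapsto y_j$ are genuinely Borel; but this is immediate because $\mathcal{C}_{d,D}$ is a closed subset of the ambient Euclidean space cut out by the affine conditions in its definition.
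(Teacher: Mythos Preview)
Your proposal is correct and follows essentially the same route as the paper: both arguments exhibit the evaluation $(\theta,x)\mapsto f_\theta(x)=\sum_j y_j\,1_{E_j}(\theta,x)$ as a finite sum of Borel functions on the ambient parameter space and then restrict to $\mathcal{C}_{d,D}$. Your write-up is in fact more detailed than the paper's, which does not spell out the induction for general $n$ or the treatment of negative $n$; one minor inaccuracy is that $\mathcal{C}_{d,D}$ is not literally closed (the constraints $y_i\ne y_{i+1}$ are open), but this is irrelevant since the subspace Borel $\sigma$-algebra agrees with the restricted ambient Borel $\sigma$-algebra for any subset.
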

\begin{proof}
Writing $x_0 = 0$, the map
\begin{align*}
[0,1)^{d+1} \times \mathbb{R}^{d+1} \times [0,1) \times \mathbb{R}
&
\to
[0,1) \times \mathbb{R}
\\
(x_1,\dots,x_{d+1},y_1,\dots,y_{d+1},x,t)
&
\mapsto
\left( Tx, t + \sum_{i=0}^d y_i \cdot 1_{[x_0 + \cdots + x_i,x_0 + \cdots + x_{i+1})}(x) \right)
\end{align*}
is measurable and \eqref{eqn:measurability} is simply its restriction to $\mathcal{C}_{d,D} \times [0,1) \times \mathbb{R}$.
\end{proof}

Define
\[
G_3(B) = \{ (f,(x,t)) \in \mathcal{C}_d \times X_B : (x,t) \textrm{ is generic for an } S_{f,B} \textrm{ invariant probability measure on } X_B \}
\]
for all $B > 0$.
Measurability of the sets $G_3(B)$ follows from Lemma~\ref{lem:measurability}.

Lastly, for each $B > 0$ and each $\epsilon > 0$ note that, by Egoroff's theorem, there is set $G_4(B,\epsilon) \subset X_B$ with $\haar_B$-measure at least $1- \epsilon$ on which the sequence
\[
\frac{1}{N} \sum_{n=0}^{N-1} \delta_{S_{f,B}^n (x,t)}
\]
of measures converges uniformly to $\mu_{f,B,(x,t)}$.
Write
\[
\coll(B,b) = [0,1) \times \Big( [-B,-B+b] \cup [B-b,B] \Big)
\]
for any $B > 0$ and any $b > 0$.
Note that $\coll(B,B) = [0,1) \times [-B,B]$.
For all $b,\tau > 0$ we have
\begin{equation}
\label{eqn:collarChebyshev}
\haar_{B+b}(\{ (x,t) \in X_{B+b} : \mu_{f,B+b,(x,t)}(\coll(B+b,2b)) \ge \tau \}) \le \frac{1}{\tau} \haar_{B+b}(\coll(B+b,2b)) \to 0
\end{equation}
as $B \to \infty$ by Markov's inequality.

In various proofs below we will make use of the following lemma, which we do not prove.

\begin{lemma}
\label{lem:tailsFull}
Let $n \mapsto a_n$ be a sequence that \Cesaro{} converges to $\alpha$.
Fix $\epsilon > 0$ and $0 < \gamma < 1$.
There is $K \in \mathbb{N}$ so large that
\[
\left| \alpha - \frac{1}{N-M} \sum_{n=M}^{N-1} a_n \right| < \epsilon
\]
whenever $N > K$ and $N - M \ge \gamma N$.
\end{lemma}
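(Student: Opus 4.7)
The plan is to reduce everything to the Cesàro averages $S_k := \frac{1}{k}\sum_{n=0}^{k-1} a_n$, which by hypothesis converge to $\alpha$. Using the telescoping identity $\sum_{n=M}^{N-1} a_n = N S_N - M S_M$, one computes
\begin{equation*}
\frac{1}{N-M}\sum_{n=M}^{N-1} a_n - \alpha = \frac{N}{N-M}(S_N - \alpha) - \frac{M}{N-M}(S_M - \alpha),
\end{equation*}
so the task reduces to bounding the two terms on the right.

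The hypothesis $N - M \ge \gamma N$ forces $N/(N-M) \le 1/\gamma$ and $M/(N-M) \le (1-\gamma)/\gamma$. Hence the first term is at most $|S_N - \alpha|/\gamma$, which is smaller than $\epsilon/2$ once $N$ is so large that $|S_N - \alpha| < \epsilon\gamma/4$. The main obstacle is the second term, because when $M$ is small the average $S_M$ need not be close to $\alpha$ at all, and we have no a priori control on $a_n$ beyond Cesàro convergence. This is handled by a case split.

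Fix $L \in \mathbb{N}$ large enough that $|S_m - \alpha| < \epsilon\gamma/4$ for every $m \ge L$. If $M \ge L$, then $|S_M - \alpha| < \epsilon\gamma/4$ already, and combined with the bound $M/(N-M) \le (1-\gamma)/\gamma < 1/\gamma$ the second term contributes less than $\epsilon/4$. If instead $M < L$, rewrite
\begin{equation*}
M(S_M - \alpha) = \sum_{n=0}^{M-1} a_n - M\alpha,
\end{equation*}
whose absolute value is bounded by the fixed constant $C_L := \sum_{n=0}^{L-1} |a_n| + L|\alpha|$ that depends only on $L$ and $\alpha$. Then the second term is at most $C_L/(\gamma N)$, which falls below $\epsilon/4$ once $N$ is large enough. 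Choosing $K$ large enough to dominate both the threshold for $|S_N - \alpha|$ and the tail condition $C_L/(\gamma N) < \epsilon/4$ gives the uniform bound in the statement.
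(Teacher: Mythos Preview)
Your argument is correct. The paper explicitly leaves this lemma unproved (``which we do not prove''), so there is no authors' proof to compare against; your telescoping-and-case-split argument fills the gap cleanly. The only cosmetic slip is that with $|S_N-\alpha|<\epsilon\gamma/4$ the first term is actually below $\epsilon/4$, not merely $\epsilon/2$, but this is harmless for the final bound.
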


\section{Proof of Theorem~\ref{thm:friendsGivesInvariance}}
\label{sec:friendsGivesInvariance}

In this section we prove Theorem~\ref{thm:friendsGivesInvariance}.
The argument is a modification of a now standard argument that goes back at least to Ratner~\cite{MR717825}.

\begin{proof}
[Proof of Theorem~\ref{thm:friendsGivesInvariance}]
Fix an ergodic interval exchange transformation $T$ on $[0,1)$ and a mean-zero step function $f : [0,1) \to \mathbb{R}$ with coordinates $(x_1,\dots,x_{d+1},y_1,\dots,y_{d+1})$.
Assume the jump discontinuities $y_{i+1} - y_i$ of $f$ generate a dense subgroup of $\mathbb{R}$.
Fix a discontinuity $p$ of $f$.
Let $v$ be the jump at $p$ and put $b = |v|$.

Fix $0 < \eta < 1/10$.
Choose $B >  \max \{2y_1,\dots,2y_{d+1} \}/(1 - \eta)$ so large that
\begin{gather*}
\haar_{B-b} ( \{ (x,t) \in X_{B-b} : f \textup{ and } (x,t) \textup{ are friends at } B-b \} ) \ge 1 - \eta
\\
\haar_{B}(\{ (x,t) \in X_{B} : \mu_{f,B,(x,t)}(\coll(B,b)) < \eta \}) \ge 1 - \frac{b}{B} - \eta
\\
\haar_{B+b}(\{ (x,t) \in X_{B+b} : \mu_{f,B+b,(x,t)}(\coll(B+b,2b)) < \eta \}) \ge 1 - \frac{2b}{B} - \eta
\\
\haar_{B+b}(\{ (x,t) \in X_{B+b} : \mu_{f,B+b,(x,t)}(\coll(B+b,b)) < \eta/2 \}) \ge 1 - \frac{2b}{B} - \eta
\end{gather*}
all hold.
The first inequality follows from the hypothesis \eqref{eqn:friendsGivesInvariance} while the rest follow from \eqref{eqn:collarChebyshev}.
It follows immediately that the four sets above each have $\haar_{B-b}$ measure at least $1 - \eta$.
Fix $L$ a compact subset of $X_{B-b}$ with $\haar_{B-b}$ measure at least $1 - \eta$ on which $(x,t) \mapsto \mu_{f,B,(x,t)}$ is continuous.
From these choices and $\eta < \frac{1}{10}$ we can find an $\haar_{B-b}$ positive measure set of $(x,t) \in X_{B-b}$ with all the following properties.
\begin{enumerate}
[label=\textbf{\textup{R\arabic*}}.,ref=\textup{\textbf{R\arabic*}}]
\item
\label{rat:1}
$(x,t)$ and $f$ are friends at $B-b$.
\item
\label{rat:3}
$(x,t) \in G_3(B+b) \cap G_3(B)$.
\item
\label{rat:4}
$(x,t)$ belongs to and is a density point of $L \cap G_4(B,\eta) \cap \Big( [0,1) \times \{t\} \Big)$.
\item
\label{rat:5}
$\mu_{f,B+b,(x,t)}(\coll(B+b,2b)) < \eta$.
\item
\label{rat:6}
$\mu_{f,B,(x,t)}(\coll(B,b)) < \eta$.
\item
\label{rat:7}
$\mu_{f,B+b,(x,t)}(X_B) \ge 1 - \frac{\eta}{2}$.
\end{enumerate}
Our goal is to prove that for every $(x,t)$ satisfying \ref{rat:1} through \ref{rat:7} the measures $\mu_{f,B,(x,t)}$ and $V^v \mu_{f,B,(x,t)}$ are not mutually singular.
Indeed if this goal is realized then, since we have a positive measure set of $(x,t) \in X_B$ satisfying \ref{rat:1} through \ref{rat:7}, by Theorem~\ref{thm:gettingInvariance} all $T_f$ invariant sets will be $V^v$ invariant.
In other words $v$ will be an essential value of $f$.
Since $v$ was an arbitrary jump of $f$ and the jumps of $f$ are assumed to generate a dense subgroup of $\mathbb{R}$ we will conclude that $T_f$ is ergodic.
To realize our goal fix $(x,t)$ satisfying \ref{rat:1} through \ref{rat:7}.

\begin{claim}
It suffices to prove that
\begin{equation}
\label{eqn:invarianceGoal}
\int g \intd \mu_{f,B,(x,t)} \le \frac{3}{2 - \eta} \int V^{-v} g \intd \mu_{f,B,(x,t)}
\end{equation}
for all $g \in \contc(X_{B-b})$.
\end{claim}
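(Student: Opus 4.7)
The plan is to read the displayed inequality as an absolute-continuity statement between $\mu := \mu_{f,B,(x,t)}$ and $V^v \mu$ on $X_{B-b}$, which directly rules out the mutual singularity of these two measures. First I would invoke the change-of-variables identity
\[
\int V^{-v} g \intd \mu = \int g \intd ( V^v \mu ),
\]
valid for every bounded Borel $g$ on $X_B$, to rewrite the hypothesized inequality as
\[
\int g \intd \mu \le \frac{3}{2-\eta} \int g \intd ( V^v \mu )
\]
for all $g \in \contc(X_{B-b})$.

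Next I would promote this functional inequality to a measure inequality. Restricting to non-negative $g$ and approximating $1_A$ from below by such test functions (monotone convergence together with outer regularity), I would obtain $\mu(A) \le \tfrac{3}{2-\eta}\, ( V^v \mu )(A)$ for every Borel $A \subset X_{B-b}$, which is exactly the statement that $\mu|_{X_{B-b}}$ is absolutely continuous with respect to $V^v \mu$. Since $\coll(B,b) = X_B \setminus X_{B-b}$, property \ref{rat:6} then forces $\mu(X_{B-b}) \ge 1 - \eta > 0$, so $\mu|_{X_{B-b}}$ is nonzero and hence a nontrivial absolutely continuous component of $\mu$ relative to $V^v \mu$. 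This rules out mutual singularity of $\mu$ and $V^v \mu$, which is the goal stated just before the claim.

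The sub-claim itself is a routine measure-theoretic unpacking and should present no real obstacle; its value is that it converts the goal of non-mutual-singularity into a concrete integral inequality amenable to orbit-level estimates. The genuine work still ahead of us is the construction of a suitable non-negative test function $g$, together with Birkhoff-type estimates on orbits of $T_f$ that exploit the friends properties \ref{fr:1}--\ref{fr:5} for $(x,t)$ (and for a nearby friend of $x$ shifted by $v$) and the density-point and genericity hypotheses \ref{rat:1}--\ref{rat:7}, that would establish the displayed inequality in the first place.
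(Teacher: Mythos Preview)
Your argument is correct but takes a different route from the paper's. You pass from the functional inequality to the measure inequality $\mu(A) \le \tfrac{3}{2-\eta}\,(V^{\pm v}\mu)(A)$ on Borel $A \subset X_{B-b}$, read this as absolute continuity of $\mu|_{X_{B-b}}$ with respect to the translated measure, and then combine with $\mu(X_{B-b}) \ge 1-\eta > 0$ from \ref{rat:6} to rule out mutual singularity. The paper instead argues by contradiction: assuming $\mu_1 = \mu$ and $\mu_2 = V^{-v}\mu$ are mutually singular, it uses \ref{rat:6} to find disjoint compact carriers $H_1 \subset X_{B-b}$ and $H_2 \subset X_B$, enlarges them to open sets $W_i$ with small cross-measure, and takes a single Urysohn function $1_{H_1} \le g \le 1_{W_1}$; plugging this $g$ into the inequality yields the chain $\int g\,d\mu_2 \le \eta \int g\,d\mu_1 \le \tfrac{3\eta}{2-\eta}\int g\,d\mu_2$, forcing all terms to vanish, a contradiction. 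Your approach is cleaner and more conceptual; the paper's is more concrete in that it only ever needs the inequality for one specific test function.

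Two small remarks. First, your change-of-variables line has a sign slip: with the paper's conventions one gets $\int V^{-v}g\,d\mu = \int g\,d(V^{-v}\mu)$, not $V^{v}\mu$; this is harmless since $\mu \perp V^v\mu$ and $\mu \perp V^{-v}\mu$ are equivalent. Second, your closing sentence anticipates constructing ``a suitable non-negative test function $g$'' to verify the inequality; in fact the paper establishes the inequality for \emph{every} (non-negative) $g \in \contc(X_{B-b})$ via the orbit comparison, so no specific $g$ is singled out at that stage.
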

\begin{proof}[Proof of claim]
If $\mu_1 = \mu_{f,B,(x,t)}$ and $\mu_2 = V^{-v} \mu_{f,B,(x,t)}$ are mutually singular then we can find via \ref{rat:6} disjoint compact sets $H_1 \subset X_B \setminus \coll(B,b)$ and $H_2 \subset X_B$ with $\mu_i(H_i) > 1 - 2\eta$ and $\mu_i(H_{3-i}) = 0$ for each $i \in \{1,2\}$.
There also exist open sets $W_2 \subset X_B$ and $W_1 \subset X_B \setminus \coll(B,b)$ so that $H_i \subset W_i$ and $\mu_i(W_{3-i}) < \eta(1 - 2\eta)$.
We may choose a continuous, non-negative function $0\leq g\leq 1$ such that $1_{H_1} \le g \le 1_{W_1}$.
It is then straightforward that
\[
\int g \intd \mu_2
\le
\mu_2(W_1)
\le
\eta(1 - 2\eta)
\le
\eta \mu_1(H_1)
\le
\eta \int g \intd \mu_1
\le
\frac{3 \eta}{2 - \eta} \int g \intd \mu_2
\]
holds.
But $\eta < 1/2$ so it must be the case that all quantities above are zero, which is impossible.
\end{proof}

To establish \eqref{eqn:invarianceGoal} suppose by \ref{rat:1} that $(x,t)$ and $f$ are right friends at $B-b$.
(The proof when they are left friends is similar and omitted.)
Let $\beta > 0$, $\delta > 0$ be the attendant constants and let $K \subset \mathbb{N}$ be the subset associated with our fixed discontinuity $p$ of $f$.
Let $k_1 < k_2 < \cdots$ be an enumeration of $K$.
By \ref{rat:4} and \ref{fr:3} we can find (because $[x+\frac \delta {2^{k_i}},x+\frac{2\delta}{2^{k_i}}]$ is a definite proportion of $[x,x+\frac{2\delta}{2^{k_i}}]$) for all $i$ large enough, some point $z_i$ in $[x,x + \frac{2\delta}{2^{k_i}}]$ with the following properties: that $(z_i,t)$ belongs to $L \cap G_4(B,\eta)$ and that our discontinuity $p$ is between $T^{\ell_i}(x)$ and $T^{\ell_i}(z_i)$ for some $0 \le \ell_i < 2^{k_i - 1}$.
Using \ref{fr:4} to rule out other discontinuities of $f$ and \ref{fr:2} to rule out a second occurrence of $p$ we have
\begin{align*}
T_f^n(z_i,t)
&
=
\big( T^n(z_i),t + f(z_i) + \cdots + f(T^{n-1} z_i) \big)
\\
&
=
\big( T^n(z_i),t + f(x) + \cdots + f(T^{n-1} x) + f(T^{\ell_i} z_i) - f(T^{\ell_i} x) \big)
\\
&
=
V^v \big( T^n(z_i),t + f(x) + \cdots + f(T^{n-1} x) \big)
\end{align*}
whenever $2^{k_i} \ge n > \ell_i$.
It follows that
\begin{equation}
\label{eqn:invarianceAppears}
\nbar T_f^n(x,t) - V^{-v} (T_f^n(z_i,t)) \nbar_2 = |z_i - x|
\end{equation}
whenever $n > \ell_i$.

To every time $n$ at which $T_f^n(x,t)$ belongs to $X_{B-b}$ corresponds some iterate $r(n)$ of the $S_{f,B}$ orbit of $(x,t)$.
For all such $n$ we also have $T_f^n(z_i,t)$ in $X_B$ and a corresponding iterate $r_i(n)$ of the $S_{f,B}$ orbit of $(z_i,t)$.
That is $S_{f,B}^{r_i(n)}(z_i,t) = T_f^n(z_i,t)$.
Define
\begin{align*}
U_i &= \{ r(n) : \ell_i < n < 2^{k_i} \textup{ and } T_f^n(x,t) \in X_{B-b} \}\\
U_i' &= \{ r_i(n) : \ell_i < n < 2^{k_i} \textup{ and } T_f^n(x,t) \in X_{B-b} \}
\end{align*}
for all large enough $i \in \mathbb{N}$.
Note that
\begin{equation}
\label{eqn:hits}
|U_i| \ge \beta  \sum_{n=0}^{2^{k_i} - 1} 1_{[-B+b,B-b]} \left( t + \sum_{j=0}^{n-1} f(T^j x) \right)
\end{equation}
because $f$ and $(x,t)$ satisfy \ref{fr:5} at $B-b$ via \ref{rat:1}.
By \eqref{eqn:invarianceAppears} we have
\begin{equation}
\label{eqn:invarianceInduced}
\nbar S_{f,B}^{r(n)}(x,t) - V^{-v} (S_{f,B}^{r_i(n)}(z_i,t)) \nbar_2 = |z_i - x|
\end{equation}
whenever $T_f^n(x,t) \in X_{B-b}$ and $n > \ell_i$.

\begin{claim}
We have
\begin{equation}
\label{eqn:xSleeveHits}
|U_i|
\ge
\beta(1-2\eta) \sum_{n=0}^{2^{k_i} - 1} 1_{[-B,B]} \left( t + \sum_{j=0}^{n-1} f(T^j x) \right)
\end{equation}
for all large enough $i$.
\end{claim}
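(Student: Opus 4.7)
The plan is to derive the claim from \eqref{eqn:hits} by bounding the discrepancy between counting visits of the $T_f$-orbit of $(x,t)$ to $X_B$ versus to $X_{B-b}$. First I would set
\[
N_i := \sum_{n=0}^{2^{k_i}-1} 1_{[-B,B]}\!\left( t + \sum_{j=0}^{n-1} f(T^j x) \right),
\]
which equals the number of $S_{f,B}$-iterates applied to $(x,t)$ before time $2^{k_i}$ under $T_f$. Since $\coll(B,b) = X_B \setminus X_{B-b}$ and each visit of the $T_f$-orbit to $X_B$ is a step of $S_{f,B}$, one has the identity
\[
\sum_{n=0}^{2^{k_i}-1} 1_{[-B,B]}\!\left( t + \sum_{j=0}^{n-1} f(T^j x) \right) - \sum_{n=0}^{2^{k_i}-1} 1_{[-(B-b),B-b]}\!\left( t + \sum_{j=0}^{n-1} f(T^j x) \right) = \sum_{m=0}^{N_i-1} 1_{\coll(B,b)}\!\left( S_{f,B}^m(x,t) \right).
\]

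Next I would verify that $N_i \to \infty$: this follows from conservativity of $T_f$ (which holds since $f$ has mean zero), guaranteeing that the $T_f$-orbit of $(x,t)$ returns to $X_B$ infinitely often. Then I would invoke generic-ness. By \ref{rat:3} and \ref{rat:4}, the point $(x,t)$ lies in $G_3(B) \cap G_4(B,\eta)$, so $\frac{1}{N}\sum_{m=0}^{N-1} \delta_{S_{f,B}^m(x,t)}$ converges weakly to $\mu_{f,B,(x,t)}$. Because $\coll(B,b)$ is closed in $X_B$, the Portmanteau theorem gives
\[
\limsup_{N \to \infty} \frac{1}{N} \sum_{m=0}^{N-1} 1_{\coll(B,b)}\!\left( S_{f,B}^m(x,t) \right) \le \mu_{f,B,(x,t)}\!\left(\coll(B,b)\right) < \eta,
\]
where the strict inequality is \ref{rat:6}. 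Hence for all sufficiently large $i$ the right-hand side of the identity above is at most $2\eta N_i$.

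Combining these observations with the identity yields
\[
\sum_{n=0}^{2^{k_i}-1} 1_{[-(B-b),B-b]}\!\left( t + \sum_{j=0}^{n-1} f(T^j x) \right) \ge (1 - 2\eta)\, N_i,
\]
which inserted into \eqref{eqn:hits} produces $|U_i| \ge \beta(1-2\eta) N_i$, the desired inequality \eqref{eqn:xSleeveHits}. The only delicate point is bookkeeping: one must carefully translate the $T_f$-timescale (bound $2^{k_i}$) into the induced $S_{f,B}$-timescale (bound $N_i$), and justify the Portmanteau step given that $\coll(B,b)$ is closed but possibly not a continuity set for $\mu_{f,B,(x,t)}$. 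Both are routine.
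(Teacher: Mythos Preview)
Your proposal is correct and follows essentially the same approach as the paper: both arguments translate the $T_f$ timescale into the $S_{f,B}$ timescale, use genericity at $B$ together with \ref{rat:6} to bound the fraction of $S_{f,B}$-iterates landing in $\coll(B,b)$ by $2\eta$, and then combine with \eqref{eqn:hits}. Your explicit invocation of the Portmanteau inequality for the closed set $\coll(B,b)$ is in fact slightly more careful than the paper, which simply asserts convergence of the empirical average of $1_{\coll(B,b)}$ from \ref{rat:3}.
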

\begin{proof}
First note that
\[
\left| \frac{1}{N} \sum_{n=1}^N 1_{\coll(B,b)}(S_{f,B}^n (x,t)) - \mu_{f,B,(x,t)}(\coll(B,b)) \right| < \eta
\]
if $N$ is large enough by \ref{rat:3}.
Thus
\[
\sum_{n=1}^N 1_{\coll(B,b)}(S_{f,B}^n (x,t)) \le 2 \eta N
\]
by \ref{rat:6}.
In terms of $T_f$ this becomes
\[
\sum_{n=0}^{2^{k_i} - 1} 1_{[-B,-B+b] \cup [B-b,B]} \left( t + \sum_{j=0}^{n-1} f(T^j x) \right)
\le
2\eta \sum_{n=0}^{2^{k_i} - 1} 1_{[-B,B]} \left( t + \sum_{j=0}^{n-1} f(T^j x) \right)
\]
whenever $i$ is large enough.
Combining with \eqref{eqn:hits} gives \eqref{eqn:xSleeveHits}.
\end{proof}

\begin{claim}
We have
\begin{equation}
\label{eqn:ziSleeveHits}
|U_i'|
\ge
\beta(1-2\eta) \sum_{n=0}^{2^{k_i} - 1} 1_{[-B,B]} \left( t + \sum_{j=0}^{n-1} f(T^j z_i) \right)
\end{equation}
for all large enough $i$.
\end{claim}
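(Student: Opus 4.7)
The plan is to reduce the second claim to the first by observing that $|U_i'|$ and $|U_i|$ count the same set of indices, and then to transfer the lower bound from an $S_n^x$-sum to an $S_n^z$-sum using the closeness of the two orbits together with conditions \ref{rat:3} and \ref{rat:7}.

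First I would observe that $|U_i'|=|U_i|$. Both sets enumerate, via the injective functions $r$ and $r_i$ respectively, the same index set $\{n:\ell_i<n<2^{k_i},\ T_f^n(x,t)\in X_{B-b}\}$; the identity noted before \eqref{eqn:invarianceAppears} shows that for every such $n$, $T_f^n(z_i,t)$ lies in $X_B$, so that $r_i(n)$ is well defined. The first claim therefore supplies
\[
|U_i'|\;=\;|U_i|\;\ge\;\beta(1-2\eta)\sum_{n=0}^{2^{k_i}-1}1_{[-B,B]}(S_n^x),
\]
where I write $S_n^x=t+\sum_{j=0}^{n-1}f(T^jx)$ and $S_n^z=t+\sum_{j=0}^{n-1}f(T^jz_i)$.

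Next I would compare $\sum 1_{[-B,B]}(S_n^z)$ to $\sum 1_{[-B,B]}(S_n^x)$. Since $S_n^z\in\{S_n^x,\,S_n^x+v\}$ for every $n$ and $|v|=b$, we have $1_{[-B,B]}(S_n^z)\le 1_{[-B-b,B+b]}(S_n^x)$, hence $\sum 1_{[-B,B]}(S_n^z)\le \sum 1_{X_{B+b}}(T_f^n(x,t))$. Genericity of $(x,t)$ in $G_3(B+b)$ from \ref{rat:3}, together with the bound $\mu_{f,B+b,(x,t)}(X_B)\ge 1-\eta/2$ from \ref{rat:7}, gives $\sum 1_{X_{B+b}}(T_f^n(x,t))\le (1+O(\eta))\sum 1_{[-B,B]}(S_n^x)$ for $i$ sufficiently large (this is exactly the translation between $S_{f,B+b}$-iterates and $T_f$-iterates that was used in the proof of the first claim, applied at level $B+b$ rather than $B$). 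Chaining with the display above then yields $|U_i'|\ge\beta(1-2\eta)(1-O(\eta))\sum_{n=0}^{2^{k_i}-1}1_{[-B,B]}(S_n^z)$.

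The main obstacle will be the constant bookkeeping: the argument naturally produces a prefactor slightly worse than $\beta(1-2\eta)$, and one has to verify that the $O(\eta)$ slack can be reabsorbed---either by being a bit more careful in the choice of $\eta$ at the outset of the theorem's proof, or by noting that \ref{rat:7}'s slack $\eta/2$ was precisely built in to leave room for exactly this step. Conceptually, the key point is that no new information about the $(z_i,t)$-orbit is required: everything is driven by the $(x,t)$-orbit at the enlarged threshold $B+b$, with the density-point condition \ref{rat:4} entering only through the validity of the choice of $z_i$.
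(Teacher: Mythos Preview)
Your approach is correct and suffices for the downstream application to Lemma~\ref{lem:tailsFull}, but it takes a different route from the paper and yields a slightly weaker constant than the one stated.

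The paper does not go through the first claim. Instead it returns directly to \ref{fr:5} at level $B-b$ to obtain $|U_i'| = |U_i| \ge \beta \sum_{n=0}^{2^{k_i}-1} 1_{[-B+b,B-b]}(S_n^x)$, and then invokes \ref{rat:5} (not \ref{rat:7}) together with genericity at $B+b$ from \ref{rat:3} to show that the collar $\coll(B+b,2b)$ is visited at most a $2\eta$ fraction of the time. This gives $\sum_n 1_{[-B+b,B-b]}(S_n^x) \ge (1-2\eta)\sum_n 1_{[-B-b,B+b]}(S_n^x)$, and the last sum dominates $\sum_n 1_{[-B,B]}(S_n^z)$ by the same observation you made. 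The outcome is exactly $\beta(1-2\eta)$ with no further loss.

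Your route---first claim plus \ref{rat:7}---delivers $|U_i'| \ge \beta(1-2\eta)(1-\tfrac{\eta}{2}-\epsilon)\sum_n 1_{[-B,B]}(S_n^z)$, which you correctly flag. This weaker constant is harmless for the theorem, since only some fixed positive proportion is needed to invoke Lemma~\ref{lem:tailsFull} for the $(z_i,t)$-averages. However, your guess about the purpose of \ref{rat:7} is off: in the paper \ref{rat:7} is reserved for the very last step (bounding the ratio $|V_i|/|W_i|$ via $\mu_{f,B+b,(x,t)}(X_B)$), not for the present claim. So if you want the constant exactly as stated, use \ref{rat:5} and \ref{fr:5} directly rather than piggybacking on \eqref{eqn:xSleeveHits}.
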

\begin{proof}[Proof of claim]
Certainly $|U_i| = |U_i'|$.
We have
\[
|U_i|
\ge
\sum_{n=2^{k_i - 1}}^{2^{k_i} -1} 1_{[-B+b,B-b]} \left( t + \sum_{j=0}^{n-1} f(T^j x) \right)
\ge
\beta \sum_{n=0}^{2^{k_i} - 1} 1_{[-B+b,B-b]} \left( t + \sum_{j=0}^{n-1} f(T^j x) \right)
\]
because $\ell_i < 2^{k_i - 1}$ and because $(x,t)$ satisfies \ref{fr:5} at $B-b$ via \ref{rat:1}.
Arguing as in the previous claim, from \ref{rat:3} and \ref{rat:5} we also have
\[
\sum_{n=0}^{2^{k_i} - 1} 1_{[-B-b,-B+b] \cup [B-b,B+b]} \left( t + \sum_{j=0}^{n-1} f(T^j x) \right)
\le
2\eta \sum_{n=0}^{2^{k_i} - 1} 1_{[-B-b,B+b]} \left( t + \sum_{j=0}^{n-1} f(T^j x) \right)
\]
for all $i$ is large enough.
Combining these two estimates gives
\[
|U_i'| \ge \beta (1-2\eta) \sum_{n=0}^{2^{k_i} - 1} 1_{[-B-b,B+b]} \left( t + \sum_{j=0}^{n-1} f(T^j x) \right)
\]
and we conclude that
\[
|U_i'| \ge \beta (1-2\eta) \sum_{n=0}^{2^{k_i} - 1} 1_{[-B,B]} \left( t + \sum_{j=0}^{n-1} f(T^j z_i) \right)
\]
because $T_f^n(x,t) \in X_{B+b}$ whenever $T_f^n(z_i,t) \in X_B$.
\end{proof}

Turning to the validity of \eqref{eqn:invarianceGoal} fix $g \in \contc(X_{B-b})$ and $\epsilon > 0$.
For each time $n$ at which $T_f^n(x,t)$ belongs to $X_B$ let $s(n)$ be the corresponding iterate of $S_{f,B}(x,t)$.
For each time $n$ at which $T_f^n(z_i,t)$ belongs to $X_B$ let $s_i(n)$ be the corresponding iterate of $S_{f,B}(z_i,t)$.
Define
\begin{align*}
W_i &= \{ s(n) : \ell < n < 2^{k_i} \textup{ and } T_f^n(x,t) \in X_B \} \\
V_i &= \{ s_i(n) : \ell_i < n < 2^{k_i} \textup{ and } T_f^n(z_i,t) \in X_B \}
\end{align*}
for all $i \in \mathbb{N}$ large enough, both of which are intervals of natural numbers.
Note that $W_i \supset U_i$ and that $V_i \supset U_i'$.

If we choose $i$ large enough then
\[
\left| \int g \intd \Phi_B(x,t) - \frac{1}{|W_i|} \sum_{n \in W_i} g(S_{f,B}^n(x,t)) \right| < \epsilon
\]
using $(x,t) \in G_3(B)$ from \ref{rat:3} and Lemma~\ref{lem:tailsFull} via \eqref{eqn:xSleeveHits} and $|W_i| \ge |U_i|$.
Since $g$ is supported on $X_{B-b}$ we have
\[
\frac{1}{|W_i|} \sum_{n \in W_i} g(S_{f,B}^n(x,t))
=
\frac{1}{|W_i|} \sum_{n \in U_i} g(S_{f,B}^n(x,t))
\]
for all $i$.
Since every member of $U_i'$ corresponds to a unique member of $U_i$ we have
\[
\left|
\frac{1}{|W_i|} \sum_{n \in U_i} g(S_{f,B}^n(x,t))
-
\frac{1}{|W_i|} \sum_{n \in U_i'} (V^{-v} g) (S_{f,B}^n(z_i,t))
\right|
\le
\epsilon
\]
from \eqref{eqn:invarianceInduced} and uniform continuity of $g$ if $i$ is large enough.
Combining the above three with $V_i \supset U_i'$ and $g \ge 0$ gives
\begin{equation}
\label{eqn:firstStep}
\int g \intd \Phi_B(x,t) \le 2\epsilon + \frac{1}{|W_i|} \sum_{n \in V_i} (V^{-v} g) (S_{f,B}^n(z_i,t))
\end{equation}
for all $i$ large enough.

The point $(z_i,t)$ is generic for $\Phi_B(z_i,t)$ and belongs to $G_4(B,\eta)$ by \ref{rat:4}.
By \eqref{eqn:ziSleeveHits} and $|V_i| \ge |U_i'|$ we can apply Lemma~\ref{lem:tailsFull} to get
\[
\left|
\frac{1}{|V_i|} \sum_{n \in V_i} (V^{-v} g) (S_{f,B}^n(z_i,t))
-
\int V^{-v} g \intd \Phi_B(z_i,t)
\right|
<
\epsilon
\]
if $i$ is large enough.
Since $(x,t) \in L$ by \ref{rat:4} we also have
\[
\left| \int V^{-v} g \intd \Phi_B(z_i,t) - \int V^{-v} g \intd \Phi_B(x,t) \right| < \epsilon
\]
for $i$ large enough.
These inequalities together with \eqref{eqn:firstStep} imply
\[
\int g \intd \Phi_B(x,t)
\le
2 \epsilon \left( 1 + \frac{|V_i|}{|W_i|} \right) + \frac{|V_i|}{|W_i|} \int V^{-v} g \intd \Phi_B(x,t)
\]
for our point $(x,t)$ and our function $g \in \contc(X_{B-b})$.

Finally, for every time $n$ at which $T_f^n(x,t)$ belongs to $X_{B+b}$ let $t(n)$ be the corresponding iterate of the $S_{f,B+b}$ orbit of $(x,t)$.
Put
\[
Y_i = \{ t(n) : \ell_i < n < 2^{k_i} \textup{ and } T_f^n(x,t) \in X_{B+b} \}
\]
for all $i$ large enough and note that $|V_i| \le |Y_i|$.
Since $(x,t)$ is generic for $\Phi_{B+b}(x,t)$ by \ref{rat:3} we have
\[
\lim_{N \to \infty} \frac{|W_i|}{|Y_i|} = \mu_{f,B+b,(x,t)}(X_B) \ge 1 - \frac{\eta}{2}
\]
by \ref{rat:7}.
Choosing $\epsilon$ small enough (depending only on $\eta$) and $i$ large enough gives
\[
\int g \intd \Phi_B(x,t) \le \frac{3}{2} \frac{1}{1 - \frac{\eta}{2}} \int V^{-v} g \intd \Phi_B(x,t)
\]
which is \eqref{eqn:invarianceGoal}.
\end{proof}

\section{Proof of Theorem~\ref{thm:linearRecurrenceFriends}}
\label{sec:linearRecurrenceFriends}

Fix throughout this section a linearly recurrent interval exchange transformation $T$ and attendant constants $c_1$, $c_2$ and $c_3$ as in Section~\ref{sec:introduction}.
Fix $\delta = c_2/4$.

\subsection{Conditions \ref{fr:1} and \ref{fr:2} always hold either on the left or the right}
\label{sec:fr1fr2always}

\begin{lemma}
\label{lem:cond 1}
For all $x\in [0,1)$ and $n\in \mathbb{N}$ at least one of the the following two possibilities hold.
\begin{itemize}
\item
$\{ T^i[x,x+\frac{2\delta}{n}] : 0 \le i < n \}$ consists of $n$ disjoint intervals.
\item
$\{ T^i[x-\frac{2\delta}{n},x] : 0 \le i < n \}$ consists of $n$ disjoint intervals.
\end{itemize}
\end{lemma}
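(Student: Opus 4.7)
The plan reduces the disjointness claim in the conclusion to a one-sided continuity statement at $x$: once each of $T^0, T^1, \dots, T^{n-1}$ is continuous on $[x, x+2\delta/n]$ (or on the left analogue), disjointness of the images will be automatic from the orbit-separation consequence of linear recurrence. Concretely, an IET acts as an orientation-preserving translation on every continuity interval, so continuity on $[x,x+2\delta/n]$ gives $T^i[x,x+2\delta/n] = [T^i x, T^i x + 2\delta/n]$; and since the orbit $x, Tx, \dots, T^{n-1}x$ is $c_2/n$-separated with $c_2/n = 4\delta/n > 2\delta/n$, the $n$ resulting intervals of length $2\delta/n$ are pairwise disjoint for free.

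It remains to exhibit one of $[x, x+2\delta/n]$ or $[x-2\delta/n, x]$ lying entirely inside a common continuity interval of $T, T^2, \dots, T^{n-1}$. The discontinuities of those iterates all lie in $D \cup T^{-1}D \cup \cdots \cup T^{-(n-2)}D$, and linear recurrence bounds the atoms of this partition from below by $c_3/n$ (up to a harmless off-by-one in the parameter of $\eta$). Hence the atom of this partition containing $x$ has length at least $c_3/n$, and $x$ sits inside it with at least half of that length on one of its two sides. Provided $c_3 \ge c_2 = 4\delta$, that half-length is at least $2\delta/n$, and the corresponding side furnishes the desired interval.

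The main obstacle I anticipate is the implicit consistency requirement $c_2 \le c_3$. This can be arranged at no cost by replacing $c_2$ with $\min(c_2, c_3)$: the orbit-separation property persists with any smaller constant, and the resulting smaller $\delta$ only strengthens the conclusion, since shorter intervals are disjoint whenever longer ones are. Aside from this bookkeeping and some minor care at atom endpoints (where the half-open conventions for the IET must be respected), the argument is essentially routine.
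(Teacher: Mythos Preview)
Your argument is correct, and in fact cleaner than the paper's in one respect. The paper obtains disjointness by invoking a lemma of Boshernitzan (that an interval shorter than the minimal gap between discontinuities of $T^{n+1}$ has $n$ disjoint continuous iterates), whereas you bypass this entirely: once $T^i$ is continuous on $[x,x+2\delta/n]$ the images are length-$2\delta/n$ intervals with left endpoints $T^i x$, and the $c_2/n$ orbit separation with $2\delta/n = c_2/(2n) < c_2/n$ forces them to be pairwise disjoint. This is more elementary and self-contained.

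For the continuity half, your argument and the paper's are essentially the same idea in different clothing. The paper argues by tracking the first forward iterate at which the two-sided interval $(x-\tfrac{c_2}{n},x+\tfrac{c_2}{n})$ meets a discontinuity of $T$ and then showing the opposite half survives; you argue by noting the atom of the partition $D \cup T^{-1}D \cup \cdots \cup T^{-(n-2)}D$ containing $x$ has length at least $c_3/n$, so one half of it has length at least $c_3/(2n) \ge 2\delta/n$. Your phrasing is more direct.

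Your bookkeeping remark about $c_2$ versus $c_3$ is well taken, and indeed the paper itself is loose on this point: its claim that ``the discontinuities of $T^n$ are $c_2/n$ separated'' is literally a statement about $\eta(n-1)$, hence about $c_3$, not about orbit separation. Your fix (replace $c_2$ by $\min(c_2,c_3)$ before setting $\delta = c_2/4$) is harmless and makes both arguments honest. The half-open endpoint issues you flag are genuinely minor.
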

The fact that these are disjoint follows from the next result of  Boshernitzan. (Indeed if $T^iA$ are disjoint sets for $p\leq i\leq q$ then $T^jA$ are disjoint sets for $0\leq j\leq q-p$.)
\begin{lemma}
[{\cite[Lemma 4.4]{MR961737}}]
\label{interval}
If $T$ satisfies the Keane condition and the distance between any discontinuities of $T^{n+1}$ is $s$ then for any interval $J$ with measure at most $ s$ there exist integers ${p \leq 0 \leq q}$ (which depend on $J$) such that
\begin{enumerate}
\item[(1)]
$q-p \geq n$
\item[(2)]
$T^i$ acts continuously on $J$ for $p \leq i \leq q$
\item[(3)]
$T^i(J) \cap T^j(J)= \emptyset$ for $p \leq i < j \leq q$.
\end{enumerate}
\end{lemma}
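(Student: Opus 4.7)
The plan is to take $q \ge 0$ to be the largest integer such that $T^i$ is continuous on $J$ for every $0 \le i \le q$, and $p \le 0$ to be the smallest integer such that $T^i$ is continuous on $J$ for every $p \le i \le 0$; Keane's condition guarantees these are finite. Property (2) is then immediate.

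For property (1), I would use the hypothesis on $s$. The discontinuity set of $T^{n+1}$ equals $\bigcup_{j=0}^n T^{-j} D$ and by hypothesis these points are pairwise at distance at least $s$, so $J$ (of length at most $s$) contains at most one of them. If it contains none, then $T^{n+1}$ is continuous on $J$, giving $q \ge n+1$ and $q - p \ge n$ at once. Otherwise $J$ contains exactly one such point, which by Keane lies in a unique set $T^{-k} D$ with $0 \le k \le n$. This pins down $q = k$, and an analogous argument applied to backward iterates --- using the same spacing together with Keane to rule out points of $T^j D$ inside $J$ for $1 \le j \le n-k$ --- yields $-p \ge n - k$, and hence $q - p \ge n$.

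For property (3), observe that each continuous iterate $T^i$ with $p \le i \le q$ acts on $J$ as a single translation $J \mapsto J + \Delta_i$, with the translation numbers satisfying $\Delta_{i+j} = \Delta_i + \Delta_j$ whenever both sides correspond to continuous iterates. If $T^i(J) \cap T^j(J) \neq \emptyset$ for some $p \le i < j \le q$, then $|\Delta_{j-i}| < |J|$, which is a near-return of $J$ to itself under $T^{j-i}$. This would allow one to extend the range of continuous iterates acting on $J$ as an isometric translation, contradicting the maximality defining $p$ and $q$; equivalently, the near-return forces two points of $\bigcup_{j=0}^n T^{-j} D$ to lie inside $J$ at distance strictly less than $s$, contradicting the spacing hypothesis.

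The hard part will be the backward-to-forward symmetry used in (1): extracting $-p \ge n - k$ from a hypothesis that nominally controls only the discontinuities of $T^{n+1}$ (and not those of $T^{-(n+1)}$) requires invoking either the parallel structure of $T^{-1}$ --- whose relevant discontinuities are the forward $T$-images of $D$, controlled implicitly through Keane --- or a packing argument that uses the disjointness established in (3) to bound discontinuity encounters uniformly in both directions. This is the technical heart of Boshernitzan's original proof.
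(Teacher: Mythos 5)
You should note first that the paper offers no proof of this lemma at all: it is quoted verbatim from Boshernitzan (Lemma~4.4 of the cited reference), so your argument has to stand on its own. The skeleton is fine as far as it goes: taking the maximal two-sided continuity window around $0$ gives (2), and the spacing hypothesis shows $J$ meets $\bigcup_{j=0}^{n}T^{-j}D$ in at most one point, which settles (1) when it meets none. But in the main case the inequality $-p\ge n-k$ is exactly where the lemma lives, and the justification you sketch fails: the spacing hypothesis concerns only $\bigcup_{j=0}^{n}T^{-j}D$, whereas the obstructions to backward continuity are points of $\bigcup_{j\ge 1}T^{j}(D\cup\{0\})$ (discontinuities of negative powers), about which it says nothing, and Keane only says such forward images avoid $D$ and one another --- it does not keep them out of $J$. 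The missing idea is a conversion of backward data into forward data: if $T^{-m}$ is discontinuous at a point $v$ of $J$ with $m\le n$, then the one-sided limits of $T^{-m}$ at $v$ lie in $\bigcup_{i=0}^{m-1}T^{-i}(D\cup\{0,1\})$, hence are discontinuities of $T^{n+1}$ (or endpoints); transporting the point $T^{-k}\beta\in J$, $\beta\in D$, along the continuity branch of $T^{-m}$ on the side of $v$ containing it then exhibits two distinct (by Keane) discontinuities of $T^{n+1}$ at distance less than $|J|\le s$ whenever $k+m\le n$, which is the desired contradiction. You explicitly defer this step as ``the hard part,'' so (1) is unproved in the only case that matters.

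Property (3) is in worse shape: for your choice of $p,q$ it is false, not merely unproved, so no argument can complete it without shrinking the window. Maximality bounds $q-p$ from below, never from above, and once $q-p$ exceeds the first return time of $J$ the images must overlap. For the rotation by a small irrational $\alpha$ (a $2$-interval exchange) and $J=[1-\alpha,1)$ one has $s=\alpha=|J|$, the maximal continuity window has length roughly $2/\alpha$, yet $T^{r}J\cap J\neq\emptyset$ already for $r$ roughly $1/\alpha$, so the window contains overlapping images. A near-return also does not ``contradict the maximality defining $p$ and $q$'': a small translation of $J$ inside a long continuity interval is entirely compatible with extending continuity further. And your alternative claim, that a near-return forces two points of $\bigcup_{j=0}^{n}T^{-j}D$ into $J$ at distance less than $s$, is precisely the statement needing proof, and it is only true when the lag $j-i$ is at most $n$. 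The correct route is to pass to a subwindow of length exactly $n$ containing $0$ (possible once (1) is established) and to prove that an interval of length at most $s$ cannot meet its image under a continuity branch of $T^{r}$ for any $1\le r\le n$; that claim again requires the forward/backward conversion described above, not the maximality of $p$ and $q$.
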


\begin{proof}
[Proof of Lemma \ref{lem:cond 1}]
Linear recurrence implies the discontinuities of $T^n$ are $\frac{c_2}{n}$ separated.
Writing $\beta_1,\dots,\beta_r$ for the discontinuities of $T$, it follows that $T^i[\beta_j,\beta_j + \frac {c_2}{n}) \cap \{\beta_1,\dots,\beta_r\} = \emptyset$ and $T^i[\beta_j - \frac{c_2}{n},\beta_j) \cap \{\beta_1,\dots,\beta_r\} = \emptyset$ for all $1 \le i \le n$ and all $1 \le j \le r$.
Now consider $T^i (x-\frac{c_2}{n},x+\frac{c_2}{n})$.
If it is not an interval then some discontinuity $\beta$ of $T$ belongs to $T^j (x-\frac{c_2}{n},x+\frac{c_2}{n})$ for some $0 \le j < i$.
If $\beta \in [T^j x - \frac{c_2}{2n},T^j x)$ then by above we have that $T^i [x,x+\frac{c_2}{2n}) \cap \{\beta_1,\dots,\beta_r\} = \emptyset$ for all $0 \le i < n$.
Similarly, if $\beta \in [T^j x, T^j x + \frac{c_2}{2n})$ we have the other possibility.
\end{proof}

\subsection{Conditions \ref{fr:3} and \ref{fr:4} are obtainable by nudging}
\label{sec:f3f4byNudging}

Fix $f$ in $\mathcal{C}_{d,D}$ and $x \in [0,1)$.
Let $(x_1,\dots,x_{d+1},y_1,\dots,y_{d+1})$ be the coordinates of $f$ as in \eqref{eqn:simpleFunction}.
Put $\xi = \min \{x_1,\dots,x_{d+1} \}$.
We verify in this subsection that, by perturbing $f$, we can assume conditions \ref{fr:3} and \ref{fr:4} are true.

We wish to choose $g \in \mathcal{C}_{d}$ close enough to $f$ such that \ref{fr:3} and \ref{fr:4} hold.
We construct $g$ by nudging the locations of the discontinuities of $f$.
Specifically, if we wish to move the location of a discontinuity of $f$ to the left or to the right we adjust the values taken by $f$ on the interval to the right of the discontinuity in such a way that the resulting step function still has zero mean.
Explicitly, to nudge $f$ by moving its $i$th discontinuity from $x_1 + \cdots + x_i$ to $x_1 + \cdots + x_i + \zeta$ we replace $f$ with the step function $\nudge(f,i,\zeta)$ having coordinates
\[
\left(x_1,\dots,x_i + \zeta, x_{i+1} - \zeta,\dots, x_{d+1},y_1,\dots,y_i,\frac{y_{i+1} x_{i+1} - \zeta y_i}{x_{i+1} - \zeta},\dots,y_{d+1} \right) 
\]
which makes sense provided $|\zeta| < \frac{\xi}{2}$.
Recall that $\metric$ denotes the $\ell^\infty$ metric on the data $(x_1,\dots,y_{d+1})$.

\begin{lemma}
\label{lem:nudgeAffectsValues}
If $|\zeta| < \frac{\xi}{2}$ then $\metric(f,\nudge(f,i,\zeta)) < \max \{ |\zeta|, 8 |\zeta| D/3 \xi \}$.
\end{lemma}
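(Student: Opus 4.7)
The plan is to compute $\metric(f,\nudge(f,i,\zeta))$ directly from the explicit coordinate formulas, using that the $\ell^\infty$ distance is the maximum of the coordinate-wise absolute differences. Comparing the two parameter tuples shows only three coordinates are altered: $x_i$ is shifted by $+\zeta$, $x_{i+1}$ is shifted by $-\zeta$, and $y_{i+1}$ is replaced by $(y_{i+1}x_{i+1}-\zeta y_i)/(x_{i+1}-\zeta)$; the remaining $2d-1$ coordinates coincide.

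First I would dispatch the two $x$-coordinate changes, each contributing exactly $|\zeta|$ to the $\ell^\infty$ distance. The real work is on the $y_{i+1}$ coordinate, which I would simplify by direct algebra:
\[
\frac{y_{i+1}x_{i+1}-\zeta y_i}{x_{i+1}-\zeta}-y_{i+1}
\;=\;\frac{\zeta(y_{i+1}-y_i)}{x_{i+1}-\zeta}.
\]
I would then bound the numerator above by $2D|\zeta|$ using $|y_i|,|y_{i+1}|\le D$ (which follows from $f\in\mathcal{C}_{d,D}$), and bound the denominator below using the hypothesis $|\zeta|<\xi/2$ together with $x_{i+1}\ge\xi$; the worst case ($\zeta>0$) gives a lower bound proportional to $\xi$, producing the stated constant $8D/(3\xi)$ in the second entry of the max.

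Taking the maximum of the $|\zeta|$ contribution coming from the two $x$-coordinate shifts and the quotient bound coming from the $y_{i+1}$ shift yields the lemma. There is no genuine obstacle here: the argument is a bounded calculation, and the only care required is correct handling of the sign of $\zeta$ when lower-bounding $|x_{i+1}-\zeta|$ and correct verification that $\nudge(f,i,\zeta)$ indeed lies in $\mathcal{C}_d$ (the sum of the new $x$-coordinates is still $1$, and the mean-zero constraint is preserved by the construction of the replacement value for $y_{i+1}$).
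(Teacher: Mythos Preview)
Your proposal is correct and follows essentially the same approach as the paper: both compute the coordinate-wise differences directly, observe that only $x_i$, $x_{i+1}$, and $y_{i+1}$ change, simplify the $y_{i+1}$-difference to $\zeta(y_{i+1}-y_i)/(x_{i+1}-\zeta)$, and then bound the numerator by $2D|\zeta|$ and the denominator below using $|\zeta|<\xi/2\le x_{i+1}/2$. The paper's proof is in fact just the one displayed chain of inequalities for the $y_{i+1}$-coordinate, with the $x$-coordinate contributions of $|\zeta|$ left implicit.
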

\begin{proof}
The values of the skewing function have changed by
\begin{equation}
\label{eqn:nudgeOutputChange}
\left| \frac{y_{i+1} x_{i+1} - \zeta y_i}{x_{i+1} - \zeta} - y_{i+1} \right|
=
\left| \frac{\zeta y_{i+1} - \zeta y_i}{x_{i+1} - \zeta} \right|
\le
\frac{|\zeta| 2 D}{|x_{i+1}- \zeta|}
\le
\frac{|\zeta| 8D}{3 x_{i+1}}
\le
\frac{|\zeta| 8D}{3 \xi}
\end{equation}
in carrying out the nudge.
\end{proof}

\begin{proposition}
\label{prop:closeF3F4}
Fix $f$ in $\mathcal{C}_{d,D}$, $1 \le i \le d$ and $x \in [0,1)$.
For every $k$ in $\mathbb{N}$ with
\begin{equation}
\label{eqn:kLargeNudge}
\frac{c_1 + \delta}{2^{k-1}} < \frac{\xi}{4}
\end{equation}
there is $g$ in $\mathcal{C}_d$ with
\begin{equation}
\label{eqn:closeF3F4}
\metric(f,g)
\le
\frac{2c_1 + 3\delta}{2^{k}} (d+1) \max \left\{ 1, \frac{8D}{3\xi} \right\}
\end{equation}
such that $\metric(g,h) < \frac{\delta}{3 \cdot 2^k}$ implies $x$ and $h$ satisfy \ref{fr:1} through \ref{fr:4} on the left or on the right.
\end{proposition}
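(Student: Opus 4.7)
The plan is to combine Lemma~\ref{lem:cond 1} with the linear recurrence of $T$ and the nudging operation of Lemma~\ref{lem:nudgeAffectsValues}. Start by applying Lemma~\ref{lem:cond 1} with $n = 2^k$: after possibly swapping ``right'' for ``left'' throughout, we may assume the intervals $T^l[x, x + \tfrac{2\delta}{2^k}]$ for $0 \le l < 2^k$ are pairwise disjoint and that each $T^l$ is continuous there. This is exactly \ref{fr:1} and \ref{fr:2}, and since these properties depend only on $T$, $x$, and $k$ they are automatically inherited by any perturbation $h$ of $g$. It therefore suffices to produce a nearby $g$ so that \ref{fr:3} and \ref{fr:4} hold with a margin of at least $\delta/(3 \cdot 2^k)$ against further $\metric$-perturbations.

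For \ref{fr:3}, the $c_1/2^{k-1}$-density of the first $2^{k-1}$ iterates of $x$ guarantees some $j_0 < 2^{k-1}$ with $|T^{j_0} x - p| \le 2c_1/2^k$, where $p = x_1 + \cdots + x_i$ is the designated discontinuity of $f$. Apply $\nudge(\cdot,i,\zeta_i)$ with $|\zeta_i| \le 2c_1/2^k + \delta/2^k$ to move $p$ to the midpoint of $T^{j_0}[x, x + \tfrac{\delta}{2^k}]$; the midpoint sits at distance $\delta/(2\cdot 2^k) > \delta/(3 \cdot 2^k)$ from each endpoint, so any further $\metric$-perturbation of size $\delta/(3\cdot 2^k)$ keeps $p$ inside $T^{j_0}[x, x + \tfrac{\delta}{2^k}]$. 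For \ref{fr:4}, the $c_2/2^k$-separation of the orbit $\{T^l x : 0 \le l < 2^k\}$ combined with $\delta = c_2/4$ leaves gaps of length at least $c_2/2^k - 2\delta/2^k = 2\delta/2^k$ between consecutive intervals in the union. For each remaining discontinuity $p_j$ with $j \ne i$, apply $\nudge(\cdot,j,\zeta_j)$ with $|\zeta_j| \le 3\delta/2^k$ to place $p_j$ at the centre of such a gap, leaving distance at least $\delta/2^k > \delta/(3 \cdot 2^k)$ from $\bigcup_{l=0}^{2^k-1} T^l[x, x + \tfrac{2\delta}{2^k}]$.

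The $d$ nudges are essentially independent: nudging the $j$th discontinuity alters only $x_j$, $x_{j+1}$, and $y_{j+1}$, so any single $x$-coordinate is touched by at most two nudges and each $y$-coordinate by at most one. Hypothesis~\eqref{eqn:kLargeNudge} ensures that $(2c_1 + 3\delta)/2^k$ stays comfortably below $\xi/2$, even after the smallest coordinate shrinks slightly along the way, so Lemma~\ref{lem:nudgeAffectsValues} applies at every step and bounds each per-nudge $\metric$-contribution by $(2c_1 + 3\delta)/2^k \cdot \max\{1,8D/(3\xi)\}$. Taking the $\ell^\infty$ over all coordinates and using the $(d+1)$ slack to absorb the doubling on $x$-coordinates yields the bound~\eqref{eqn:closeF3F4}.

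The main thing to juggle is the interplay of three scales on $[0,1)$: the density scale $c_1/2^k$ governing how far $p$ must be moved to secure \ref{fr:3}, the separation scale $c_2/2^k \sim \delta/2^k$ that produces the usable gaps for \ref{fr:4}, and the robustness scale $\delta/(3 \cdot 2^k)$ that must fit strictly inside both a target interval of length $\delta/2^k$ and the gaps surrounding each other discontinuity. Condition~\eqref{eqn:kLargeNudge} is precisely what makes these three compatible while keeping every individual nudge small enough for Lemma~\ref{lem:nudgeAffectsValues} to apply; once that is in place the argument is essentially bookkeeping.
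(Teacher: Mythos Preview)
Your argument is correct and follows the same scheme as the paper: invoke Lemma~\ref{lem:cond 1} for \ref{fr:1}--\ref{fr:2}, then nudge the designated discontinuity into a half-tower interval for \ref{fr:3} and nudge the remaining discontinuities just outside the $2\delta/2^k$-tower for \ref{fr:4}, using Lemma~\ref{lem:nudgeAffectsValues} to accumulate the $\metric$-cost. The only notable difference is that the paper splits into two cases according to whether the $i$th discontinuity already lies in some $T^\ell[x,x+\tfrac{\delta}{2^k}]$ with $\ell<2^{k-1}$; you instead apply the $c_1/2^{k-1}$-density bound uniformly, which simply lands you in the paper's Case~2 estimate and gives the same final bound \eqref{eqn:closeF3F4}. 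Your phrasing ``centre of such a gap'' is slightly loose when the gap is large, but the constraints you record ($|\zeta_j|\le 3\delta/2^k$ and distance $\ge \delta/2^k$ from the tower) are exactly what is needed and match the paper's target region $T^j[x+\tfrac{3\delta}{2^k},x+\tfrac{4\delta}{2^k}]$.
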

\begin{proof}
Fix $k \in \mathbb{N}$ satisfying \eqref{eqn:kLargeNudge}.
Assume that the first possibility in Lemma~\ref{lem:cond 1} is true for $n = 2^k$.
(The alternative is treated similarly.)
There are two cases to consider, according to whether there is a time $0 \le \ell < 2^{k-1}$ at which $T^\ell [x,x+\frac{\delta}{2^k}]$ contains the $i$th discontinuity of $f$.
Note that our assumption on $k$ guarantees that each such interval contains at most one discontinuity of $f$.
 
\textbf{Case 1}:
There is such an $\ell$.
Nudge the discontinuity in $T^\ell[x,x+\frac{\delta}{2^k}]$ by at most $\frac{\delta}{2^k}$ so that it lies in $T^\ell[x+\frac{\delta}{3 \cdot 2^k},x+\frac{2\delta}{3 \cdot 2^k}]$.
For each $0 \le j < 2^k$ with $j \ne \ell$ and $T^j [x,x+\frac{3\delta}{2^k}]$ containing a discontinuity of $f$ we nudge the discontinuity of $f$ by at most 
$\frac{3 \delta}{2^k}$ so that it lies in $T^j[x+\frac{3\delta}{2^k},x+\frac{4 \delta}{2^k}]$.
For the resulting function $g$ we have
\begin{equation}
\label{eqn:distanceForGoodTower}
\metric(f,g) \le \frac{3\delta}{2^k} (d+1) \max \left\{ 1, \frac{8D}{3 \xi} \right\}
\end{equation}
by Lemma~\ref{lem:nudgeAffectsValues}.

\textbf{Case 2}:
There is no such $\ell$.
For each $0 \le j < 2^k$ with $T^j [x,x+\frac{3\delta}{2^k}]$ containing a discontinuity of $f$ we nudge the discontinuity of $f$ by at most $\frac{3\delta}{2^k}$ so that it lies in $T^j[x+\frac{ 3\delta}{ 2^k},x+\frac{4 \delta}{3 \cdot 2^k}]$.
By linear recurrence $\{ T^j x : 0 \le j < 2^{k-1} \}$ is within a distance of at most $\frac{c_1}{2^{k-1}}$ from the $i$th discontinuity of $f$.
We nudge it by at most $\frac{c_1}{2^{k-1}} + \frac{\delta}{2^k}$ to lie within some $T^j[x+\frac{\delta}{3 \cdot 2^k},x+\frac{2 \delta}{3 \cdot 2^k}]$ with $0 \le j < 2^{k-1}$.
For the resulting function $g$ we have
\[
\metric(f,g)
\le
\frac{3\delta + 2c_1}{2^{k}} (d+1) \max \left\{ 1, \frac{8D}{3\xi} \right\}
\]
by Lemma~\ref{lem:nudgeAffectsValues}.

In both cases we have constructed a function $g$ with
\[
\metric(f,g)
\le
\frac{3\delta + 2c_1}{2^{k}} (d+1) \max \left\{ 1, \frac{8D}{3\xi} \right\}
\]
and the following properties:
\begin{itemize}
\item
there is only one discontinuity of $g$ in $\cup \{ T^j [x,x+\frac{\delta}{2^k}] : 0 \le i < 2^k \}$;
\item
there is $0 \le j < 2^{k-1}$ such that $T^j [x,x+\frac{\delta}{2^k}]$ contains the only discontinuity of $g$.
\end{itemize}
Moreover, by the construction every $h$ in $\mathcal{C}_d$ with $\metric(g,h) < \frac{\delta}{3 \cdot 2^k}$ also satisfies the above properties.
Thus $x$ and any such $h$ satisfy \ref{fr:1} through \ref{fr:4} on the right (for this $k$).
\end{proof}


\subsection{Condition \ref{fr:5} holds almost surely}
\label{sec:magic}

Using the material of Appendix~\ref{apdx:quantitativeUniqueErgodicity} we prove the following theorem.

\begin{theorem}
\label{thm:magic}
Fix $D > 0$ and $d \in \mathbb{N}$.
There is $\beta > 0$ such that, for every $f \in \mathcal{C}_{d,D}$ and every $B > 0$ and every $t \in (-B,B)$, almost every $x \in [0,1)$ satisfies
\[
\sum_{n=2^{k-1}}^{2^k-1} 1_{[-B,B]} \left( t + \sum_{i=0}^{n-1} f(T^i x) \right)
\ge
\beta \sum_{n=0}^{2^k - 1} 1_{[-B,B]} \left( t + \sum_{i=0}^{n-1} f(T^i x) \right)
\]
for infinitely many $k$.
\end{theorem}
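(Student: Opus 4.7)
Set $A_k(x,t) := \sum_{n=0}^{2^k - 1} 1_{[-B,B]}(t + S_n f(x))$, where $S_n f := f + f \circ T + \cdots + f \circ T^{n-1}$. The inequality in the statement is equivalent to $A_{k-1}(x,t) \le (1-\beta) A_k(x,t)$, so the task is to produce $\beta > 0$ (uniform in $f \in \mathcal{C}_{d,D}$, $B > 0$, $t \in (-B,B)$) such that this holds for infinitely many $k$ for almost every $x$.

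The plan is first to extract from Appendix~\ref{apdx:quantitativeUniqueErgodicity} a uniform polynomial bound on ergodic sums: there exist $\gamma \in (0,1)$ and $C > 0$ depending only on $T$, $d$, $D$ such that $|S_n f(x)| \le C \cdot n^\gamma$ for every $n \in \mathbb{N}$, every $x \in [0, 1)$, and every $f \in \mathcal{C}_{d,D}$. This confines the cocycle orbit $(t + S_n f(x))_{n < 2^k}$ to a slab of width $O(2^{k\gamma})$ around zero. The second step is to leverage this confinement together with Atkinson's recurrence of $T_f$ and the quantitative equidistribution of $T$ to produce a lower bound $A_k(x, t) \ge c \cdot 2^{k(1-\gamma)}$, with a uniform $c > 0$, holding for almost every $x$ and every $t \in (-B, B)$ for infinitely many $k$. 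The heuristic is that $2^k$ orbit points constrained to a slab of width $\sim 2^{k\gamma}$, with Lipschitz increments of size at most $D$, must distribute so as to hit $[-B, B]$ on the order of $2^{k(1-\gamma)}$ times.

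With the lower bound in hand, the theorem follows by contradiction. If \ref{fr:5} failed for all $k \ge K(x)$ on a positive measure set of $x$, iterating $A_k \le A_{k-1}/(1-\beta)$ yields $A_k \le A_K (1-\beta)^{-(k-K)}$, an exponential upper bound. Combining with the infinitely-often lower bound $A_k \ge c \cdot 2^{k(1-\gamma)}$ forces $2^{1-\gamma}(1-\beta) \le 1$. Setting $\beta := \tfrac{1}{2}\bigl(1 - 2^{-(1-\gamma)}\bigr) > 0$ therefore produces a contradiction, and gives a uniform $\beta$ depending only on $T$, $d$, $D$.

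I expect the main obstacle to be the second step: producing a quantitative lower bound on visits to the specific interval $[-B, B]$, rather than merely to the enclosing slab. The polynomial upper bound on $|S_n f(x)|$ alone does not rule out orbits that avoid $[-B, B]$ for long stretches; bridging this gap will require a genuine equidistribution statement for the cocycle inside its confinement slab, which I expect to derive by combining the quantitative unique ergodicity of $T$ (applied to indicator functions of short sub-intervals of $[0,1)$) with conservativity of the skew product. A secondary technical issue is that the null set of exceptional $x$ must be chosen uniformly in $t \in (-B,B)$, so that the conclusion holds for every $t$ rather than merely almost every $t$; this will presumably be handled by arranging that the ``bad'' set be expressible in terms of quantities that are continuous or monotone in $t$.
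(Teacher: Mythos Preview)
Your overall architecture is exactly that of the paper: (i) the power-saving bound $|S_n f(x)| \le N^\gamma$ from Appendix~\ref{apdx:quantitativeUniqueErgodicity}; (ii) a polynomial lower bound on the visit count $A_k$; (iii) the growth argument showing that a monotone $g$ with $n^\alpha \le g(n) \le n$ infinitely often must satisfy $g(2^k)-g(2^{k-1}) \ge \beta g(2^k)$ infinitely often for any $\beta < 1-2^{-\alpha}$. Your step (iii) is precisely Lemma~\ref{lem:poly many}, and your choice of $\beta$ is the paper's.

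The gap is in step (ii). Your proposed route---``equidistribution of the cocycle inside its confinement slab'' via quantitative unique ergodicity of $T$ combined with conservativity of $T_f$---is not how the paper proceeds, and it is not clear it can be made to work: quantitative unique ergodicity of $T$ controls Birkhoff sums of functions on $[0,1)$, not occupation statistics of the $\mathbb{R}$-valued cocycle, and conservativity alone gives no rate. The paper instead proves a quantitative Atkinson theorem (Theorem~\ref{thm:quantitativeAtkinson}) by a short pigeonhole argument that uses only the confinement bound $|S_n f| \le N^\gamma$. Cover $[-N^\gamma,N^\gamma]$ by $O(N^\gamma/\epsilon)$ intervals of length $\epsilon$. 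For a fixed interval $J$, list the times $0 \le L_1 < \cdots < L_s \le N$ with $S_{L_i} f(x) \in J$; then for $i<j$ one has $|S_{L_j-L_i} f(T^{L_i} x)| < \epsilon$, so every $L_i$ with $s-i \ge N^{1-\gamma-\epsilon}$ is a starting point along the orbit with at least $N^{1-\gamma-\epsilon}$ returns to $(-\epsilon,\epsilon)$. Summing over the covering intervals, at most $O(N^\gamma/\epsilon)\cdot N^{1-\gamma-\epsilon} = o(N)$ of the times $1 \le L \le N$ fail, hence most shifted points $T^L x$ are good; averaging over $x$ transfers this to a full-measure statement. Taking $\epsilon < B-|t|$ handles the ``every $t$'' issue you flagged, since $(-\epsilon,\epsilon) \subset [-B,B]-t$ deterministically. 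This pigeonhole is the missing idea; once you have it, your steps (i) and (iii) complete the proof exactly as in the paper.
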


The first step is to prove the following quantitative version of Atkinson's theorem.

\begin{theorem}
\label{thm:quantitativeAtkinson}
Let $T$ be an aperiodic measure-preserving transformation on a probability space $(X,\mathscr{B},\mu)$ and let $f : X \to \mathbb{R}$ in $\lp^1(X,\mathscr{B},\mu)$ have zero mean.
Further assume that there exists $0 < \gamma < 1$ and $N_0 \in \mathbb{N}$ such that
\begin{equation} \label{eq:assumption}
\left| \sum_{n=0}^{N-1} f(T^n x) \right| < N^\gamma
\end{equation}
for all $x \in X$ and all $N \ge N_0$.
Then for every $\epsilon > 0$ there is $N_1 \in \mathbb{N}$ such that
\[
\sum_{n=0}^{N-1} 1_{(-\epsilon,\epsilon)} \left( \sum_{i=0}^n f(T^i x) \right) > N^{1-\gamma-\epsilon}
\]
for almost every $x$ whenever $N \ge N_1$.
\end{theorem}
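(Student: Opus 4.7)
The plan is to combine a deterministic pigeonhole argument — which gives the correct exponent $1-\gamma$ in expectation — with an ergodic-theoretic upgrade to reach the pointwise almost sure statement.

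Write $S_k(x) = \sum_{i=0}^{k-1} f(T^i x)$ and $M_\delta(y, N) = \#\{1 \le k \le N : |S_k(y)| < \delta\}$, so the left hand side of the conclusion of the theorem is $M_\epsilon(x, N)$. Fix $x \in X$ and $N \ge N_0$. By hypothesis the $N+1$ values $S_0(x), \ldots, S_N(x)$ all lie in $(-N^\gamma, N^\gamma)$. Cover this interval by $K = \lceil 4N^\gamma/\epsilon \rceil$ sub-intervals of length at most $\epsilon/2$, and let $m_j$ denote the number of partial sums in the $j$-th sub-interval. Convexity of $r \mapsto \binom{r}{2}$ gives
\[
\sum_j \binom{m_j}{2} \ge K \binom{(N+1)/K}{2} \ge c \epsilon N^{2-\gamma}
\]
for some constant $c > 0$ and all $N$ large enough. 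Every pair $(i, j)$ counted on the left satisfies $|S_j(x) - S_i(x)| < \epsilon/2$, equivalently $|S_{j-i}(T^i x)| < \epsilon/2$.

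Integrating this pointwise inequality in $x$, applying Fubini, using $T$-invariance and rearranging the double sum yields
\[
\int M_\epsilon(\cdot, N) \intd \mu \ge \int M_{\epsilon/2}(\cdot, N) \intd \mu \ge c' \epsilon N^{1-\gamma}.
\]
Reading the same pigeonhole differently, for every $x$ we can choose $0 \le n_1(x) < N$ to be the smallest index landing in the heaviest bin, and then $|S_{n_i - n_1(x)}(T^{n_1(x)} x)| < \epsilon/2$ for each of the $m_{j^*} \ge c' \epsilon N^{1-\gamma}$ indices $n_i$ in that bin. Hence $M_{\epsilon/2}(T^{n_1(x)} x, N) \ge c' \epsilon N^{1-\gamma} - 1$, so the set $G_N := \{y : M_{\epsilon/2}(y, N) \ge c'' \epsilon N^{1-\gamma}\}$ covers $X$ in the sense $X = \bigcup_{n<N} T^{-n} G_N$, giving $\mu(G_N) \ge 1/N$.

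To promote these bounds to the pointwise almost sure statement we invoke Appendix~\ref{apdx:quantitativeUniqueErgodicity}. The effective ergodic theorem there is used to upgrade the measure bound $\mu(G_N) \ge 1/N$ to a density-of-visits statement: for almost every $x$ and all $N' \gg N$, the orbit of $x$ visits $G_N$ on the order of $N'/N$ times up to time $N'$. At each such visit $j$ the pigeonhole applied at the starting point $T^j x$ witnesses on the order of $\epsilon N^{1-\gamma}$ near-returns of $T_f^k(T^j x, 0)$ to $X \times (-\epsilon/2, \epsilon/2)$. An application of Atkinson's theorem to restrict to those visits $j$ at which $S_j(x)$ itself lies within $\epsilon/2$ of zero then converts these into near-returns of the skew product orbit of $(x, 0)$ to $X \times (-\epsilon, \epsilon)$; summing over the many such visits gives $M_\epsilon(x, N') \ge (N')^{1-\gamma-\epsilon}$ for $N'$ sufficiently large. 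The main obstacle lies in this final upgrade step: the elementary pigeonhole gives the right exponent in expectation and, for each $x$, a ``lucky'' shift $T^{n_1(x)} x$, but the heaviest pigeonhole bin need not be centred near zero, so one cannot directly conclude that $x$ itself witnesses many near-returns. Bridging this gap — effectively relocating the pigeonhole clusters to zero for almost every starting point and at many time scales — is exactly where the combination of the quantitative ergodic theorem of the appendix with Atkinson's theorem plays its role.
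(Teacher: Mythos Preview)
Your pigeonhole set-up is the right one, and the expectation bound $\int M_\epsilon \intd\mu \gtrsim \epsilon N^{1-\gamma}$ is correct, but the upgrade from expectation to ``almost every $x$'' has a genuine gap. Your route is to take the single heaviest bin, deduce that one shift $T^{n_1(x)}x$ is good, and then try to recover the lost factor of $N$ via Appendix~\ref{apdx:quantitativeUniqueErgodicity} and Atkinson. This cannot work as written: the theorem is stated for an arbitrary aperiodic measure-preserving transformation, whereas the results of Appendix~\ref{apdx:quantitativeUniqueErgodicity} are specific to linearly recurrent interval exchange transformations, so you are not entitled to invoke them here. Moreover the step in which Atkinson's theorem is supposed to relocate the pigeonhole clusters to zero is only sketched; as you yourself note, the heaviest bin need not be near zero and you give no mechanism to force this.

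The missing idea is that you are throwing away too much by keeping only the heaviest bin. The paper sharpens the same pigeonhole to show that for \emph{every} $x$ and all large $N$, all but $o(N)$ of the shifts $T^L x$ with $0 \le L < N$ are good starting points. Concretely: in each $\epsilon$-bin with $s$ hits at times $L_1 < \cdots < L_s$, every $L_i$ with $s - i \ge N^{1-\gamma-\epsilon}$ satisfies $T^{L_i}x \in E_N$, so each bin contributes at most $N^{1-\gamma-\epsilon}$ bad indices; summing over the $O(N^\gamma/\epsilon)$ bins gives $O(N^{1-\epsilon}) = o(N)$ bad indices in total. Integrating the resulting pointwise bound $\frac{1}{N}\sum_{L=0}^{N-1} 1_{E_N}(T^L x) \ge 1-\eta$ immediately yields $\mu(E_N) \ge 1-\eta$ for all large $N$, with no need for any effective ergodic theorem or for Atkinson.
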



\begin{proof}
First, we claim that it suffices to prove for every $\eta > 0$ that there is arbitrarily large $N \in \mathbb{N}$ with
\begin{equation}\label{eq:orbit count}
\left| \left\{ 1 \le L \le N: \sum_{m=0}^N 1_{(-\epsilon,\epsilon)} \left( \, \sum_{n=0}^m f(T^i T^L x) \right) > N^{1-\gamma-\epsilon} \right\} \right|
>
(1-\eta) N
\end{equation}
for all $x$. 
Denote the subset of $\{1,\dots,N\}$ at left by $H_x$ (with the dependence on $N$ implicit).
Let
\[
E_N
=
\left\{ x \in [0,1) : \sum_{n=0}^{N-1} 1_{(-\epsilon,\epsilon)} \left( \sum_{i=0}^n f(T^i x) \right) > N^{1-\gamma-\epsilon} \right\}
\]
and notice that if $j \in H_x$ then $T^jx \in E_N$.  Now, if we have \eqref{eq:orbit count} then 
\[
\int_X \sum_{i=1}^N 1_{E_N}(T^ix) \intd \mu \geq (1-\eta)N
\]
and so $\mu(E_N)\geq \frac{(1-\eta)N}N$.

Now we prove \eqref{eq:orbit count}.
Fix $\eta > 0$.
If $N$ is large enough then $f(x) + \cdots + f(T^{N-1} x)$ belongs to $[-N^\gamma,N^\gamma]$ for all $1 \le L \le N$ and all $x \in X$.
Fix an interval $J \subset [-N^\gamma,N^\gamma]$ of length $\epsilon$.
Let $0 \le L_1 < \cdots < L_s \le N$ be an enumeration of those $1 \le L \le N$ at which $f(x) + \cdots + f(T^{L-1} x)$ belongs to $J$.
We have
\[
\epsilon
>
\left| \sum_{n=0}^{L_j - 1} f(T^n x) - \sum_{n=0}^{L_i - 1} f(T^n x) \right|
=
\left| \sum_{n=0}^{L_j - L_i - 1} f(T^n T^{L_i} x) \right|
\]
for all $0 \le i < j \le s$.
Therefore $L_i$ belongs to $H_x$ whenever $s - i \ge N^{1 - \gamma - \epsilon}$ holds.
Since we can cover $[-N^\gamma,N^\gamma]$ by at most $\lceil 2N^\gamma \epsilon^{-1} \rceil$ intervals of length $\epsilon$, it follows that at most $\lceil 2N^\gamma \epsilon^{-1} \rceil N^{1 - \gamma - \epsilon}$ of the $1 \le L \le N$ do not belong to $H_x$.
For $N$ large enough (and independent of $x$) we will therefore have $|H_x| > (1-\eta) N$.
%
\end{proof}

\begin{lemma}\label{lem:poly many}
Fix $0 < \alpha < 1$ and suppose $g : \mathbb{N} \to \mathbb{N}$ is non-decreasing and satisfies $n^\alpha \le g(n) \le n$ for infinitely many $n \in \mathbb{N}$.
Then for every $0 < \beta < 1 - 2^{-\alpha}$ one has $g(2^k) - g(2^{k-1}) \ge \beta g(2^k)$ infinitely often.
\end{lemma}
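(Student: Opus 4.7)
The plan is to argue by contradiction. Suppose there exists $k_0 \in \mathbb{N}$ such that $g(2^k) - g(2^{k-1}) < \beta g(2^k)$ for every $k \ge k_0$. Rearranging gives
\[
g(2^k) < \frac{1}{1 - \beta} g(2^{k-1})
\]
for each such $k$, and iterating this bound yields
\[
g(2^k) \le C \left( \frac{1}{1-\beta} \right)^{k}
\]
for all $k \ge k_0$, where $C = g(2^{k_0}) (1-\beta)^{k_0}$ is a constant independent of $k$.

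The key observation is that the hypothesis $\beta < 1 - 2^{-\alpha}$ is equivalent to $1/(1-\beta) < 2^\alpha$, so the geometric growth rate just established is strictly slower than $2^{\alpha k}$. To close the contradiction I use the hypothesis on $g$: take an infinite sequence $n_j \to \infty$ with $n_j^\alpha \le g(n_j)$, and for each $j$ let $k_j$ be the unique integer with $2^{k_j - 1} < n_j \le 2^{k_j}$. Monotonicity of $g$ gives
\[
g(2^{k_j}) \ge g(n_j) \ge n_j^\alpha > 2^{(k_j - 1)\alpha} = 2^{-\alpha} \cdot 2^{\alpha k_j}.
\]
Combining with the upper bound obtained above produces
\[
2^{-\alpha} \left( 2^\alpha (1 - \beta) \right)^{k_j} \le C
\]
for every sufficiently large $j$. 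Since $2^\alpha(1-\beta) > 1$ and $k_j \to \infty$, the left-hand side tends to $\infty$, which is the desired contradiction.

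I do not anticipate any real obstacle: the upper bound $g(n) \le n$ in the hypothesis is not needed, and the only subtlety is tracking the exponential rates carefully so that the hypothesis $\beta < 1 - 2^{-\alpha}$ is used exactly when showing $2^\alpha(1-\beta) > 1$.
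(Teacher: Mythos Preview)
Your proof is correct and follows essentially the same approach as the paper: argue by contradiction, iterate the inequality $(1-\beta)g(2^k) < g(2^{k-1})$ to bound $g(2^k)$ by a geometric sequence with ratio $(1-\beta)^{-1} < 2^\alpha$, and then use the lower bound $g(n_j) \ge n_j^\alpha$ along a dyadic subsequence to force a contradiction. Your observation that the hypothesis $g(n) \le n$ is never used is also correct.
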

\begin{proof}
Fix $0 < \beta < 1 - 2^{-\alpha}$.
Suppose the conclusion is false.
Then there is $K \in \mathbb{N}$ such that $(1-\beta) g(2^k) < g(2^{k-1})$ for all $k \ge K$.
Thus $(1 - \beta)^l g(2^{k+l}) < g(2^k)$ for all $l \in \mathbb{N}$ and all $k \ge K$ by induction.
Write $n_j$ for the increasing sequence of times $n \ge 2^K$ at which $n^\alpha \le g(n) \le n$ holds.
For each $j$ fix $l_j \in \mathbb{N} \cup \{0\}$ with $2^{K+l_j} \le n_j < 2^{K+l_j + 1}$.
We have
\[
(1-\beta)^{-(1 + \ell_j)} g(2^K) \geq g(2^{K + l_j + 1}) \ge g(n_j) \ge n_j^\alpha \ge 2^{(K+l_j)\alpha}
\]
for all $j \in \mathbb{N}$.
But then
\[
\Big( (1-\beta) 2^\alpha \Big)^{l_j}
\le
\frac{2^K}{(1-\beta) 2^{K \alpha}}
\]
for all $j \in \mathbb{N}$.
Taking $j$ large enough gives the desired contradiction because $(1 - \beta)2^\alpha > 1$ and so the left hand side goes to infinity with $j$ while the right hand side is independent of $j$.
\end{proof}

\begin{proof}[Proof of Theorem \ref{thm:magic}]
Let $B$ and $t$ be given with $-B<t<B$. It suffices to show that for almost every $x$ we have 
that there exists infinitely many $k$ so that  
\[
\sum_{n=2^{k-1}}^{2^k-1} 1_{[-B-t,B-t]} \left(\; \sum_{i=0}^{n-1} f(T^i x) \right)
\ge
\beta \sum_{n=0}^{2^k - 1} 1_{[-B-t,B-t]} \left(\; \sum_{i=0}^{n-1} f(T^i x) \right).
\] 
By Lemma \ref{lem:poly many} it suffices to show that there exists $\gamma>0$ and an infinite sequence of $N_j$ so that 
\[
\sum_{i=0}^{N_j} 1_{[-B-t,B-t]}\left(\; \sum_{i=0}^{n-1} f(T^i x) \right)
>
(N_j)^\gamma
\]
holds.
Letting $0<\epsilon<|B|-|t|$ and invoking Theorem \ref{thm:quantitativeAtkinson} (which we may do because Theorem \ref{thm:powerSaving} shows \eqref{eq:assumption} is satisfied) gives this condition.
Moreover, since Theorem \ref{thm:powerSaving} produces $0 < \gamma < 1$ uniform over all step functions $f$ we have that $\beta$ is uniform over those functions as well.
\end{proof}

\subsection{Condition \ref{fr:5} survives perturbations}
\label{sec:perturbations}

Here we prove that if \ref{fr:5} holds for a specific pair $(f,(x,t))$ then it also holds if $f$ is perturbed a little.


\begin{proposition}
\label{prop:collarMagic}
Fix $D > 0$ and $d \in \mathbb{N}$.
Let $\beta > 0$ be as in Theorem~\ref{thm:magic}.
Fix $B > C > 0$ and $\tau,\theta > 0$.
Suppose given $f \in \mathcal{C}_{d,D}$ and $(x,t) \in X_{B+C}$ and $K \subset \mathbb{N}$ infinite such that
\begin{enumerate}
[label=\textbf{\textup{C\arabic*}}.,ref=\textbf{\textup{C\arabic*}}]
\item
\label{cm:magic}
\ref{fr:5} holds with $B+C$ in place of $B$ for all $k \in K$;
\item
\label{cm:generic}
$(f,(x,t)) \in G_3(B+C)$;
\item
\label{cm:collar}
$\mu_{f,(x,t),B+C} \Big( \coll(B+C,2C) \Big) < \dfrac{1 - \tau}{1+\theta}$;
\end{enumerate}
all hold.
Then there is $K' \subset K$ cofinite in $K$ such that
\begin{equation}
\label{eqn:magicCollarGoal}
\sum_{n=2^{k-1}}^{2^k-1} 1_{[-B+C,B-C]} \left( t + \sum_{i=0}^{n-1} f(T^i x) \right)
\ge
\beta \tau
\sum_{n=0}^{2^k-1} 1_{[-B-C,B+C]} \left( t + \sum_{i=0}^{n-1} f(T^i x) \right)
\end{equation}
for all $k \in K'$.
\end{proposition}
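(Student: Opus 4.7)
The plan is to decompose the second-half visit count into inner and collar contributions and then to control the collar fraction using the genericity hypothesis via Lemma~\ref{lem:tailsFull}. Let
\[
N_k := \sum_{n=0}^{2^k-1} 1_{X_{B+C}}(T_f^n(x,t)), \qquad A_k := \sum_{n=2^{k-1}}^{2^k-1} 1_{X_{B+C}}(T_f^n(x,t)),
\]
and split $A_k = A_k^{\mathsf{in}} + C_k$, where $A_k^{\mathsf{in}}$ is the left side of \eqref{eqn:magicCollarGoal} and
\[
C_k := \sum_{n=2^{k-1}}^{2^k-1} 1_{\coll(B+C,2C)}(T_f^n(x,t))
\]
counts second-half visits to the collar. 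Hypothesis \ref{cm:magic} is exactly $A_k \ge \beta N_k$, so the goal $A_k^{\mathsf{in}} \ge \beta\tau N_k$ will follow once we show $C_k \le (1-\tau) A_k$ for all sufficiently large $k \in K$.

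Set $M_k := N_k - A_k$, so $N_k$, $A_k$, $M_k$ are respectively the $S_{f,B+C}$-orbit indices of $(x,t)$ corresponding to $T_f$-time $2^k$, the second $T_f$-half $[2^{k-1},2^k)$, and $T_f$-time $2^{k-1}$. In particular,
\[
C_k = \sum_{j=M_k}^{N_k-1} 1_{\coll(B+C,2C)}(S_{f,B+C}^j(x,t))
\]
is a tail sum of length $A_k$ out of $N_k$ along the $S_{f,B+C}$-orbit. By \ref{cm:generic} the Ces\`aro averages of $1_{\coll(B+C,2C)}$ along this orbit converge to $\mu_{f,(x,t),B+C}(\coll(B+C,2C))$, and the inequality $A_k \ge \beta N_k$ supplies exactly the hypothesis needed to apply Lemma~\ref{lem:tailsFull} with $\gamma = \beta$ to the tail $[M_k,N_k)$. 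The conclusion is that, for any prescribed $\epsilon > 0$ and for all sufficiently large $k \in K$,
\[
\frac{C_k}{A_k} < \mu_{f,(x,t),B+C}(\coll(B+C,2C)) + \epsilon < \frac{1-\tau}{1+\theta} + \epsilon,
\]
where the last inequality uses \ref{cm:collar}.

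Since $\theta > 0$ the gap $(1-\tau) - \frac{1-\tau}{1+\theta} = \frac{(1-\tau)\theta}{1+\theta}$ is positive, so choosing $\epsilon$ inside this gap at the start gives $C_k \le (1-\tau) A_k$ for all $k \in K$ past a threshold $k_0$. Setting $K' := \{k \in K : k \ge k_0\}$ then produces
\[
A_k^{\mathsf{in}} = A_k - C_k \ge \tau A_k \ge \beta\tau N_k,
\]
which is precisely \eqref{eqn:magicCollarGoal}. I do not expect any real obstacle here: the only point requiring a moment of care is that $N_k \to \infty$ so that Lemma~\ref{lem:tailsFull} can be invoked, and this is automatic because $(x,t)$ is generic under $S_{f,B+C}$ for a probability measure and therefore has an infinite $S_{f,B+C}$-orbit.
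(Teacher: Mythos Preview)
Your proof is correct and follows essentially the same route as the paper: both arguments use \ref{cm:magic} to get $A_k \ge \beta N_k$, then invoke Lemma~\ref{lem:tailsFull} with $\gamma = \beta$ along the $S_{f,B+C}$-orbit (via \ref{cm:generic}) to bound the second-half collar fraction $C_k/A_k$ by something strictly less than $1-\tau$ thanks to \ref{cm:collar}, and conclude $A_k^{\mathsf{in}} \ge \tau A_k \ge \beta\tau N_k$. The only cosmetic difference is that the paper absorbs the $\epsilon$ from Lemma~\ref{lem:tailsFull} into the expression $(1+\theta)\mu_{f,(x,t),B+C}(\coll(B+C,2C)) + \eta$ rather than writing $\mu_{f,(x,t),B+C}(\coll(B+C,2C)) + \epsilon$ and using the gap $\frac{(1-\tau)\theta}{1+\theta}$ directly as you do.
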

\begin{proof}
Let $f$ and $(x,t)$ be as in the hypothesis.
By \ref{cm:collar} we can fix $\eta > 0$ such that
\begin{equation}
\label{eqn:magicCollarGap}
(1 + \theta) \mu_{f,(x,t),B+C} \Big( \coll(B+C,2C) \Big) + \eta < 1 - \tau
\end{equation}
holds.
By \ref{cm:magic} we have
\begin{equation}
\label{eqn:bigCollarMagic}
\sum_{n=2^{k-1}}^{2^k-1} 1_{[-B-C,B+C]} \left(t+ \sum_{i=0}^{n-1} f(T^i x) \right)
\ge
\beta \sum_{n=0}^{2^k-1} 1_{[-B-C,B+C]} \left(t+ \sum_{i=0}^{n-1} f(T^i x) \right)
\end{equation}
for all $k \in K$.
Now $(f,(x,t)) \in G_3(B+C)$ by \ref{cm:generic}, so we can apply Lemma~\ref{lem:tailsFull} to get $L \in \mathbb{N}$ so large that
\[
\frac{1}{N-M} \sum_{n=M}^{N-1} 1_{\coll(B+C,2C)} \left( S_{f,B+C}^n (x,t) \right)
\le
(1 + \theta) \mu_{f,(x,t),B+C} \Big( \coll(B+C,2C) \Big) + \eta
\]
whenever $N > L$ and $N-M \ge \beta N$.
There is $K' \subset K$ cofinite such that
\[
N := \sum_{n=0}^{2^k-1} 1_{[-B-C,B+C]} \left( t + \sum_{i=0}^{n-1} f(T^i x) \right)
\]
is at least $L$ whenever $k \in K'$ and, choosing
\[
M := \sum_{n=0}^{2^{k-1}-1} 1_{[-B-C,B+C]} \left( t + \sum_{i=0}^{n-1} f(T^i x) \right)
\]
it follows from \eqref{eqn:bigCollarMagic} that $N-M \ge \beta N$.
Therefore
\begin{align*}
&
\sum_{n=2^{k-1}}^{2^k-1} 1_{[-B-C,-B+C] \cup [B-C,B+C]} \left( t + \sum_{i=0}^{n-1} f(T^i x) \right)
\\
\le
&
\left( (1 + \theta) \mu_{f,(x,t),B+C} \Big( \coll(B+C,2C) \Big) + \eta \right) \sum_{n=2^{k-1}}^{2^k-1} 1_{[-B-C,B+C]} \left( t + \sum_{i=0}^{n-1} f(T^i x) \right)
\end{align*}
for all $k \in K'$ because times $n$ at which $t + \displaystyle\sum_{i=0}^{n-1} f(T^i x)$ belongs to $[-B-C,-B+C] \cup [B-C,B+C]$ are in bijective correspondence with the visits of the $S_{f,B+C}$ orbit of $(x,t)$ to $\coll(B+C,2C)$.
We therefore have
\begin{align*}
&
\sum_{n=2^{k-1}}^{2^k - 1} 1_{[-B+C,B-C]} \left( t + \sum_{i=0}^{n-1} f(T^i x) \right)
\\
\ge \;
&
\left( 1 - (1+\theta) \mu_{f,(x,t),B+C} \Big( \coll(B+C,2C) \Big) - \eta \right) \sum_{n=2^{k-1}}^{2^k-1} 1_{[-B-C,B+C]} \left( t + \sum_{i=0}^{n-1} f(T^i x) \right)
\end{align*}
for all $k \in K'$.
Using \eqref{eqn:magicCollarGap} and \eqref{eqn:bigCollarMagic} gives \eqref{eqn:magicCollarGoal} immediately.
\end{proof}

Recall that if $f$ in $\mathcal{C}_d$ has coordinates $(x_1,\dots,x_{d+1},y_1,\dots,y_{d+1})$ then $\xi$ denotes $\min \{ x_1,\dots,x_{d+1} \}$.

\begin{theorem}
\label{thm:perturbPoint}
Fix $D > 0$ and $d \in \mathbb{N}$ and $A > 0$.
Let $\beta > 0$ be as in Theorem~\ref{thm:magic}
Put
\begin{equation}
\label{eqn:perturbMargin}
C = 6 d D A \frac{c_1}{c_2} + 2dD + 4dAc_1
\end{equation}
and fix $B > C$.
Fix $\tau > 0$.
Given $f \in \mathcal{C}_{d,D}$ and $(x,t)$ and $K \subset \mathbb{N}$ satisfying \ref{cm:magic}, \ref{cm:generic} and \ref{cm:collar} we can find $K' \subset K$ cofinite such that, whenever $k \in K'$ and $g \in \mathcal{C}_{d,D}$ satisfy
\begin{equation}
\label{eqn:funcPerturbReq}
\metric(f,g)
\le
\min \left\{ \frac{\xi}{4(d+1)}, \frac{A(c_1 + \delta)}{2^k} \right\}
\end{equation}
then
\[
\sum_{n=2^{k-1}}^{2^k-1} 1_{[-B,B]} \left( t + \sum_{i=0}^{n-1} g(T^i x) \right)
\ge
\beta \tau \sum_{n=0}^{2^k-1} 1_{[-B,B]} \left( t + \sum_{i=0}^{n-1} g(T^i x) \right)
\]
holds.
\end{theorem}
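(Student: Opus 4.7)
\textbf{Plan for the proof of Theorem~\ref{thm:perturbPoint}.} The approach is to first invoke Proposition~\ref{prop:collarMagic} to obtain a friend-type inequality for $f$ with safety margins of width $C$ (inner indicator $[-B+C, B-C]$, outer indicator $[-B-C, B+C]$), and then to transfer this inequality to $g$ via a uniform Birkhoff sum difference bound
\[
\Big| \sum_{i=0}^{n-1} f(T^i x) - \sum_{i=0}^{n-1} g(T^i x) \Big| \le C
\qquad (0 \le n \le 2^k).
\]
Granting this bound, $|t + \sum f(T^i x)| \le B - C$ forces $|t + \sum g(T^i x)| \le B$, and conversely $|t + \sum g(T^i x)| \le B$ forces $|t + \sum f(T^i x)| \le B + C$. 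Substituting these two implications into the Proposition~\ref{prop:collarMagic} inequality sandwiches the $g$-sums and immediately gives the desired conclusion.

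The work therefore reduces to the uniform Birkhoff difference estimate. Let $\epsilon := \metric(f, g)$, so $\epsilon \le A(c_1 + \delta)/2^k$. The functions $f$ and $g$ agree up to $\epsilon$ outside a bad set $E$ which is the union of at most $d$ intervals, each of length at most $d\epsilon$ (the positions of discontinuities are sums $x_1 + \cdots + x_i$ whose coordinates shift by at most $\epsilon$ each); on $E$ the pointwise bound $|f - g| \le 2D$ holds since both functions take values in $[-D,D]$. The hypothesis $\metric(f,g) \le \xi/4(d+1)$ ensures the perturbed discontinuities remain inside $(0,1)$ and that the bad intervals do not overlap. Linear recurrence supplies the key combinatorial input: the orbit $\{T^i x : 0 \le i < n\}$ is $c_2/n$-separated, hence any interval of length $\ell$ is visited at most $\ell n / c_2 + 1$ times in $n$ iterates. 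Summing over the $d$ bad intervals with $n \le 2^k$ and $\epsilon \le A(c_1+\delta)/2^k$ bounds the bad-set contribution to the Birkhoff difference by a constant depending only on $d$, $D$, $A$ and $c_1/c_2$, and the good-set contribution is at most $n \epsilon \le A(c_1 + \delta)$. After bookkeeping (using $\delta = c_2/4$) the resulting constant matches the expression for $C$ in \eqref{eqn:perturbMargin}.

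The main obstacle is purely quantitative: combining the uniform value drift with the concentrated effect of jumps near perturbed discontinuities, and verifying that the margin $C$ absorbs both regimes. No new dynamical idea beyond the linear-recurrence separation already in hand is needed, but the constants must be tracked carefully; the hypothesis $B > C$ in the statement is precisely what is needed to make the inner indicator $[-B+C, B-C]$ produced by Proposition~\ref{prop:collarMagic} a non-degenerate interval, so that the final transfer step delivers a useful positive-ratio inequality for $g$.
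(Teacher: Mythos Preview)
Your proposal is correct and follows essentially the same route as the paper: invoke Proposition~\ref{prop:collarMagic} for the margin-$C$ inequality, bound the Birkhoff-sum difference $\big|\sum f(T^ix)-\sum g(T^ix)\big|$ by $C$ via the bad/good-set split (with linear-recurrence visit counts on the bad intervals near the perturbed discontinuities and the pointwise bound $|f-g|\le\metric(f,g)$ elsewhere), and then sandwich the $g$-indicators between the inner $[-B+C,B-C]$ and outer $[-B-C,B+C]$ $f$-indicators. The only cosmetic difference is that the paper bounds the good-set contribution by counting visits to the complementary intervals $I'_i$ via linear recurrence rather than using your simpler $n\epsilon\le A(c_1+\delta)$ estimate.
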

\begin{proof}
Let $f \in \mathcal{C}_{d,D}$ and $(x,t)$ and $K \subset \mathbb{N}$ infinite satisfy \ref{cm:magic}, \ref{cm:generic} and \ref{cm:collar}.
Let $K'$ be as in the conclusion of Proposition~\ref{prop:collarMagic}.
Fix $k \in K'$.
Fix $g$ with coordinates $(\tilde{x}_1,\dots,\tilde{x}_{d+1},\tilde{y}_1,\dots,\tilde{y}_{d+1})$ satisfying \eqref{eqn:funcPerturbReq}.

Let $I_j$ be the interval with endpoints $x_1 + \cdots + x_j$ and $\tilde{x}_1 + \cdots + \tilde{x}_j$ for each $1 \le j \le d+1$.
These intervals are disjoint by \eqref{eqn:funcPerturbReq}.
We have $|f(x) - g(x)| \le 2D$ on each such interval.
Off these intervals we have $|f(x) - g(x)| \le \metric(f,g)$.
Put $J_j = \{ 0 \le i < 2^k : T^i x \in I_j \}$ for each $1 \le j \le d+1$ and $J = J_1 \cup \cdots \cup J_{d+1}$.
The complement of the intervals $I_i$ is a collection $I'_1,\dots,I'_{d+1}$ of $d+1$ disjoint intervals with respective widths at most $x_i$.
Linear recurrence implies
\[
|J_j| \le\lceil 2^k |I_j| /c_2 \rceil\le \lceil 2^k \metric(f,g)/c_2 \rceil
\]
for all $j$ and that the orbit $x,\dots,T^{2^k-1} x$ is in $I'_i$ at most $\lceil 2^k x_i /c_2 \rceil$ times.
Now for each $0 < n \le 2^k$ we estimate that
\begin{align*}
\left| \sum_{i=0}^{n-1} f(T^i x) - \sum_{i=0}^{n-1} g(T^i x) \right|
\le\,
&
\sum_{i \in J} |f(T^i x) - g(T^i x) | + \sum_{i \notin J} |f(T^i x) - g(T^i x) |
\\
\le\,
&
d \left( \frac{ 2^k \metric(f,g)}{c_2} + 1 \right) 2D + \sum_{i=1}^{d+1} \left( \frac{2^k x_i}{c_2} + 1 \right) \metric(f,g)
\\
\le\,
&
2dD \frac{A(c_1 + \delta)}{c_2} + 2dD + \frac{A(c_1 + \delta)}{c_2} + (d+1) A(c_1 +\delta)
\\
\le\,
&
4dDA \frac{c_1}{c_2} + 2dD + 2A \frac{c_1}{c_2} + 4dA c_1
\\
\le\,
&
6 d D A \frac{c_1}{c_2} + 2dD + 4dAc_1
\end{align*}
using \eqref{eqn:funcPerturbReq} and $c_1 \ge c_2$ and $\delta = c_2/4$.
This gives the implications
\begin{align*}
t+\sum_{i=0}^{n-1} f(T^i x) \in [-B+C,B-C]
\Rightarrow\,
&
t+\sum_{i=0}^{n-1} g(T^i x) \in [-B,B]
\\
t+\sum_{i=0}^{n-1} g(T^i x) \in [-B,B]
\Rightarrow\,
&
t+\sum_{i=0}^{n-1} f(T^i x) \in [-B-C,B+C]
\end{align*}
for all $0 < n \le 2^k$.
Combining with \eqref{eqn:magicCollarGoal} we get
\begin{align*}
\sum_{n=2^{k-1}}^{2^k-1} 1_{[-B,B]} \left(t+ \sum_{i=0}^{n-1} g(T^i x) \right)
\ge
&
\sum_{n=2^{k-1}}^{2^k-1} 1_{[-B+C,B-C]} \left( t + \sum_{i=0}^{n-1} f(T^i x) \right)
\\
\ge
&
\beta \tau \sum_{n=0}^{2^k-1} 1_{[-B-C,B+C]} \left( t + \sum_{i=0}^{n-1} f(T^i x) \right)
\\
\ge
&
\beta \tau \sum_{n=0}^{2^k - 1} 1_{[-B,B]} \left( \sum_{i=0}^{n-1} t + g(T^i x) \right)
\end{align*}
for all $k \in K'$ as desired.
\end{proof}

\subsection{Conditions \ref{fr:1} though \ref{fr:5} hold almost surely}
\label{sec:linearRecurrenceFriendsProof}

Fix throughout this subsection $d \in \mathbb{N}$ and $D \in \mathbb{N}$.
Let $\beta$ be as in Theorem~\ref{thm:magic}.
Given a mean-zero step function $f$ in $\mathcal{C}_{d,D}$ put
\begin{equation}
\label{eqn:bigA}
A = 2 (d+1) \frac{10 \delta + 6 c_1}{3 \delta + 3 c_1} \max \left \{ 1, \frac{8D}{3\xi} \right\}
\end{equation}
where $\xi = \min \{ x_1,\dots,x_{d+1} \}$ and write $C = 6 d D A c_1/c_2 + 2dD + 4dAc_1$ as in \eqref{eqn:perturbMargin}.

\begin{lemma}
\label{lem:densityProps}
Fix $\tau > 0$.
For every $\epsilon > 0$ and every $f \in \mathcal{C}_{d,D}$ there is $B > 0$ such that the $\haar_{B+C}$ measure of the set of pairs $(x,t) \in X_{B+C}$ for which the following statement is true is at least $1 - \epsilon$: there are $\theta > 0$ and $K \subset \mathbb{N}$ infinite such that the following conditions all hold.
\begin{enumerate}
[label=\textup{\textbf{L\arabic*}}.,ref=\textup{\textbf{L\arabic*}}]
\item
\label{ld:1}
$(f,(x,t)) \in G_3(B+C)$.
\item
\label{ld:2}
$\mu_{f,B+C,(x,t)} \Big( \coll(B+C,2C) \Big) < \dfrac{1 - \tau}{1+\theta}$.
\item
\label{ld:3}
\ref{fr:5} with $B+C$ in place of $B$ for all $k \in K$.
\item
\label{ld:4}
$\dfrac{A(c_1 + \delta)}{2^k} \le \dfrac{\xi}{4(d+1)}$ for all $k \in K$.
\item
\label{ld:5}
$\dfrac{c_1 + \delta}{2^{k-1}} < \dfrac{\xi}{4}$ for all $k \in K$.
\end{enumerate}
\end{lemma}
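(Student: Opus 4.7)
The plan is to intersect three (asymptotically) almost-sure events corresponding to \ref{ld:1}, \ref{ld:2}, and \ref{ld:3}, then verify \ref{ld:4} and \ref{ld:5} by restricting $K$ to sufficiently large indices. Throughout, fix $f \in \mathcal{C}_{d,D}$, $\tau > 0$, and $\epsilon > 0$, and note that the constants $A$, $\xi$, $C$ appearing in \eqref{eqn:bigA} and \eqref{eqn:perturbMargin} depend only on $f$ and the linear recurrence data of $T$.

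For each $B > 0$ let $E_1(B) \subset X_{B+C}$ be the set of $(x,t)$ that are generic for an $S_{f,B+C}$-invariant probability measure on $X_{B+C}$; the Birkhoff pointwise ergodic theorem applied to $S_{f,B+C}$ gives $\haar_{B+C}(E_1(B)) = 1$, which already secures \ref{ld:1}. Let $E_2(B)$ be the set of $(x,t) \in X_{B+C}$ at which $\mu_{f,B+C,(x,t)}(\coll(B+C,2C)) < (1-\tau)/2$. The Markov-type bound in \eqref{eqn:collarChebyshev}, together with $\haar_{B+C}(\coll(B+C,2C)) = 2C/(B+C) \to 0$, gives $\haar_{B+C}(E_2(B)) \to 1$ as $B \to \infty$; on $E_2(B)$ we may therefore take $\theta = 1$ to obtain \ref{ld:2}. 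Let $E_3(B)$ be the set of $(x,t) \in X_{B+C}$ for which \ref{fr:5} with $B+C$ in place of $B$ holds for infinitely many $k$. Theorem~\ref{thm:magic}, applied with $B+C$ replacing $B$, says that for every fixed $t \in (-B-C, B+C)$ almost every $x \in [0,1)$ lies in the $t$-slice of $E_3(B)$; joint measurability of the event ``\ref{fr:5} holds at the index $k$'' follows from Lemma~\ref{lem:measurability}, so $E_3(B)$ is a measurable $\limsup$, and Fubini's theorem yields $\haar_{B+C}(E_3(B)) = 1$.

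Choose $B$ so large that $\haar_{B+C}(E_1(B) \cap E_2(B) \cap E_3(B)) \ge 1 - \epsilon$. For each $(x,t)$ in this intersection, take $\theta = 1$, let $K_0 \subset \mathbb{N}$ be the infinite set of $k$ at which \ref{fr:5} with $B+C$ holds, and fix $k_0 \in \mathbb{N}$ large enough that \ref{ld:4} and \ref{ld:5}, which depend only on the fixed constants $A$, $c_1$, $\delta$, $\xi$, $d$, hold for every $k \ge k_0$. Then $K := K_0 \cap [k_0, \infty)$ is still infinite and simultaneously verifies \ref{ld:3}, \ref{ld:4} and \ref{ld:5}.

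The one non-routine point is passing from the pointwise-in-$t$ assertion of Theorem~\ref{thm:magic} to the joint $\haar_{B+C}$-almost-sure statement needed for $E_3(B)$; this is entirely measure-theoretic and is handled by Lemma~\ref{lem:measurability} combined with a standard $\limsup$-of-measurable-events argument so that Fubini's theorem applies cleanly.
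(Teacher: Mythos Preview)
Your proof is correct and follows essentially the same approach as the paper: intersect the almost-sure events for \ref{ld:1} and \ref{ld:3} with the asymptotically full-measure event for \ref{ld:2} (via Markov's inequality and \eqref{eqn:collarChebyshev}), then trim $K$ to obtain \ref{ld:4} and \ref{ld:5}. Your explicit choice $\theta=1$ and your care in invoking Lemma~\ref{lem:measurability} together with Fubini to pass from the per-$t$ conclusion of Theorem~\ref{thm:magic} to a joint $\haar_{B+C}$-almost-sure statement are minor elaborations that the paper leaves implicit, but the structure is identical.
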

\begin{proof}
Fix $\tau > 0$, $\epsilon > 0$ and $f \in \mathcal{C}_{d,D}$.
For every $B > 0$ almost every $(x,t) \in X_{B+C}$ has the property that it is generic for an $S_{f,B}$ invariant probability measure $\mu_{f,B+C,(x,t)}$ so \ref{ld:1} holds almost surely for all $B$.
For \ref{ld:2} note that
\[
\haar_{B+C}\left( \left\{ (x,t) \in X_{B+C} : \mu_{f,B+C,(x,t)}(\coll(B+C,2C)) \ge \frac{1 - \tau}{1 + \theta} \right\} \right)
\le
\frac{1 + \theta}{1-\tau} \haar_{B+C}(\coll(B+C,2C))
\]
by Markov's inequality and that the right-hand side is less than $\epsilon$ for $B$ large enough independent of $(x,t)$.
Theorem~\ref{thm:magic} implies \ref{ld:3} is true for almost all $(x,t) \in X_{B+C}$.
By removing finitely many points from $K$ we get \ref{ld:4} and \ref{ld:5}.
\end{proof}

\begin{proof}[Proof of Theorem~\ref{thm:linearRecurrenceFriends}]
Fix $0 < \tau < 1$.
For each $(x,t) \in X_B$ and every $q \in \mathbb{N}$ we can consider the set
\[
E_B(q,(x,t)) = \bigcap_{k \ge q} \{ e \in \mathcal{C}_{d,D} : \textup{one of } \ref{fr:3},\ref{fr:4},\ref{fr:5} \textup{ at } \beta\tau \textup{, fails for } (x,t) \textup{ and } e \}
\]
of skewing functions $e$ which fail to be friends with $(x,t)$ because there are no $k$ larger than $q$ for which all of \ref{fr:3}, \ref{fr:4}, \ref{fr:5} at $\beta \tau$ hold.
Fix $f \in \mathcal{C}_{d,D}$.

\begin{claim}
If $f$ and $(x,t)$ satisfy \ref{ld:1} through \ref{ld:5} for some $\theta > 0$ and some $K \subset \mathbb{N}$ infinite then $f$ is not a density point of $E_B(q,(x,t))$.
\end{claim}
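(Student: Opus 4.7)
The plan is to produce, in every small enough neighborhood of $f$, an $\ell^\infty$-ball of comparable radius that lies entirely outside $E_B(q,(x,t))$. This keeps the density of $E_B(q,(x,t))$ at $f$ bounded away from $1$. The required ball will come from combining Proposition~\ref{prop:closeF3F4} (which produces \ref{fr:1}--\ref{fr:4} robustly near some nudge of $f$) with Theorem~\ref{thm:perturbPoint} (which propagates \ref{fr:5} at $\beta\tau$ to a neighborhood of $f$). Since \ref{ld:1}, \ref{ld:2}, \ref{ld:3} are precisely the hypotheses \ref{cm:generic}, \ref{cm:collar}, \ref{cm:magic} of Theorem~\ref{thm:perturbPoint}, that theorem is available for our pair $(f,(x,t))$ on some cofinite $K' \subset K$.

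First, fix any $k \in K'$ with $k \geq q$. By \ref{ld:5}, Proposition~\ref{prop:closeF3F4} applies and yields a function $g_k$ with $\metric(f, g_k) = O(2^{-k})$ such that every $h$ in the $\metric$-ball $B_k := \{ h : \metric(g_k,h) < \delta/(3 \cdot 2^k) \}$ satisfies \ref{fr:1}--\ref{fr:4} for $(x,t)$ at index $k$. Every such $h$ also satisfies $\metric(f, h) \leq \metric(f, g_k) + \delta/(3 \cdot 2^k)$; the specific constant $A$ in \eqref{eqn:bigA} is chosen precisely so that this sum is at most $A(c_1+\delta)/2^k$, and \ref{ld:4} then ensures the full perturbation requirement \eqref{eqn:funcPerturbReq} of Theorem~\ref{thm:perturbPoint} is met. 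That theorem delivers \ref{fr:5} at $\beta\tau$ at index $k$ for each $h \in B_k$, so the entire ball $B_k$ is disjoint from $E_B(q,(x,t))$.

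To conclude that $f$ is not a density point of $E_B(q,(x,t))$, set $r_k := \metric(f, g_k) + \delta/(3 \cdot 2^k)$, so $B_k \subset B(f, r_k)$. Both the radius of $B_k$ and $r_k$ are of order $2^{-k}$, with a fixed ratio depending only on $d$, $D$, $c_1$, $c_2$, and $\xi$. Since $\mathcal{C}_d$ is a smooth manifold and $f$ is a non-singular point of it (because $\xi > 0$), the local Lebesgue measure class assigns comparable volumes to $\ell^\infty$-cubes of comparable radii, so $\nu(B_k)/\nu(B(f, r_k))$ is bounded below by a positive constant independent of $k$. Letting $k \to \infty$ through $K'$ gives a sequence $r_k \to 0$ along which a definite fraction of $B(f, r_k)$ is disjoint from $E_B(q,(x,t))$, proving $f$ is not a density point.

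The main obstacle is the constant-juggling in the second step: one must verify that the definition of $A$ in \eqref{eqn:bigA} is simultaneously large enough to swallow $\metric(f, g_k) + \delta/(3 \cdot 2^k)$ into $A(c_1+\delta)/2^k$, yet balanced so that $\delta/(3 \cdot 2^k)$ remains a fixed fraction of $r_k$ (otherwise the final volume ratio collapses as $k \to \infty$). Once this arithmetic is checked, the claim is an immediate assembly of Proposition~\ref{prop:closeF3F4} with Theorems~\ref{thm:magic} and \ref{thm:perturbPoint}.
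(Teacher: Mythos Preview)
Your proposal is correct and follows essentially the same route as the paper: apply Theorem~\ref{thm:perturbPoint} (via \ref{ld:1}--\ref{ld:3}) to get a cofinite $K'\subset K$, then for each $k\in K'$ with $k\ge q$ use Proposition~\ref{prop:closeF3F4} (via \ref{ld:5}) to produce a ball of radius $\delta/(3\cdot 2^k)$ around a nudge $g_k$ on which \ref{fr:1}--\ref{fr:4} hold, check via \eqref{eqn:bigA} and \ref{ld:4} that this ball sits inside the perturbation range \eqref{eqn:funcPerturbReq} so that \ref{fr:5} at $\beta\tau$ also holds there, and finally observe that the ratio of the two radii is bounded independently of $k$. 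The paper carries out exactly this argument with the same constants $r(k)$ and $R(k)$.
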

\begin{proof}[Proof of claim]
Fix $f$ and $(x,t)$ such that \ref{ld:1} through \ref{ld:5} are satisfied for some $\theta > 0$ and some $K \subset \mathbb{N}$ infinite.
By \ref{ld:1}, \ref{ld:2} and \ref{ld:3} we can apply Theorem~\ref{thm:perturbPoint}, by which there is $K' \subset K$ cofinite with the following property: if $h \in \mathcal{C}_{d,D}$ satisfies
\begin{equation}
\label{eqn:closeMagic}
\metric(f,h) \le \min \left\{ \frac{\xi}{4(d+1)}, \frac{A(c_1 + \delta)}{2^k} \right\}
\end{equation}
for some $k \in K'$ then
\begin{equation}
\label{eqn:useOfNudging}
\sum_{n=2^{k-1}}^{2^k-1} 1_{[-B,B]} \left(t + \sum_{i=0}^{n-1} h(T^i x) \right)
\ge
\beta \tau \sum_{n=0}^{2^k - 1} 1_{[-B,B]} \left(t + \sum_{i=0}^{n-1} h(T^i x) \right)
\end{equation}
holds.

Fix $k \in K'$.
By \ref{ld:5} we can apply Proposition~\ref{prop:closeF3F4} to get $g \in \mathcal{C}_d$ with the properties therein.
That is, for any $h$ in $\mathcal{C}_d$ with $\metric(g,h) < \frac{\delta}{3 \cdot 2^k}$ we have that $h$ and $(x,t)$ satisfy \ref{fr:1} through \ref{fr:4} on either the left or the right for our current value of $k$.
Now
\begin{align*}
\metric(f,h)
&
\le
\frac{2c_1 + 3 \delta}{2^k} (d+1) \max \left\{ 1, \frac{8D}{3\xi} \right\} + \frac{\delta}{3 \cdot 2^k}
\\
&
\le
\frac{A(c_1 + \delta)}{2^k}
\\
&
\le
\min \left\{ \frac{\xi}{4(d+1)}, \frac{A(c_1 + \delta)}{2^k} \right\}
\end{align*}
for any such $h$ upon using \eqref{eqn:closeF3F4}, \eqref{eqn:bigA} and \ref{ld:4}.
By the previous paragraph, this implies \eqref{eqn:useOfNudging} holds.
So, for our current value of $k \in K'$ the pair $(x,t)$ and the function $h$ satisfy \ref{fr:1} through \ref{fr:5} either on the left or the right with $\tau \beta$ in place of $\beta$.

To summarize, for each $k \in K'$ every $h$ in the ball (with respect to the $\metric$ metric) centered at $g$ of radius
\[
r(k) = \frac{\delta}{3 \cdot 2^k}
\]
satisfies \ref{fr:1} through \ref{fr:5} with $\tau \beta$ in place of $\beta$ either on the left or on the right.
This ball is entirely contained within the ball centered at $f$ of radius
\[
R(k) = r(k) + \frac{2c_1 + 3 \delta}{2^k} (d+1) \max \left\{ 1, \frac{8D}{3\xi} \right\}
\]
and $\inf \{ R(k) / r(k) : k \in K' \} > 0$ so taking $k > q$ we conclude that $f$ is not a density point for the set $E_B(q,(x,t))$.
\end{proof}

Combining the claim with Lemma~\ref{lem:densityProps} gives, for almost every $f \in \mathcal{C}_{d,D}$, that
\[
\haar_B(\{ (x,t) \in X_B : f \textup{ not a density point of } E_B(q,(x,t)) \}) \to 1
\]
as $B \to \infty$.
Moreover, the convergence is uniform in $q$ because (as in Lemma~\ref{lem:densityProps}) we need only take $B$ large enough that $\haar_{B+C}(\coll(B+C,2C))$ is small.
Fix now any probability $\eta$ on $\mathcal{C}_{d,D}$ that is equivalent to Lebesgue measure in every atlas.
For each $B$ and every $(x,t) \in X_B$ the sequence $q \mapsto E_B(q,(x,t))$ of sets is increasing, so
\[
\eta \Big( \bigcup_{q \in \mathbb{N}} E_B(q,(x,t)) \Big)
\le
\limsup_{q \to \infty} \eta ( \{ \textup{Density points of } E_B(q,(x,t)) \} )
\]
holds.
But
\[
\haar_B(\{ (x,t) \in X_B : \eta ( \{ \textup{Density points of } E_B(q,(x,t)) \} ) > \epsilon \}) \to 0
\]
as $B \to \infty$ so we conclude the set
\[
\haar_B \left( \left\{ (x,t) \in X_B : \eta \left( \bigcap_{B=1}^\infty \bigcup_{q \ge 1} E_B(q,(x,t)) \right) = 0 \right\} \right)
\to
0
\]
as $B \to \infty$.
By Fubini's theorem, for almost every $f$ we have \eqref{eqn:friendsGoal}.
\end{proof}

\appendix

\renewcommand{\thesection}{\Alph{section}}

\section{An ergodic decomposition}
\label{apdx:gettingInvariance}

In this appendix we prove Theorem~\ref{thm:gettingInvariance}.
We will do this using an ergodic decomposition result for quasi-invariant measures due to Schmidt that we reproduce here for convenience.
For measures $\mu,\nu$ on a measure space $(X,\mathscr{B})$ recall that $\mu$ and $\nu$ are \define{equivalent} written $\mu \sim \nu$ when each is absolutely continuous with respect to the other.
Also $\mu,\nu$ are \define{mutually singular} written $\mu \perp \nu$ if there is a measurable set $A \subset X$ with $\mu(A^\mathsf{c}) = 0 = \nu(A)$.

\begin{theorem}
[{\cite[Theorems 6.6 and 6.7]{MR0578731}}]
\label{thm:decompositionSchmidt}
Let $(X,\mathscr{B})$ be a measurable space and let $F : X \to X$ be a $\mathcal{B}$ measurable map.
Fix an $F$ quasi-invariant probability measure $\mu$ on $(X,\mathscr{B})$.
There exists a standard Borel space $(Y,\mathscr{Y})$, a surjective, measurable map $\psi: X \to Y$, and a family $y \mapsto q_y$ of Borel probability measures on $(X,\mathscr{B})$ with the following properties.
\begin{enumerate}
\item \label{conc:borel}
The map $y \mapsto q_y(A)$ is Borel for every $A \in \mathscr{B}$.
\item
Against all members of $\mathscr{B}$ one has $\mu = \displaystyle\int q_y \intd \rho(y)$ where $\rho = \psi \mu$. 
\item
All of the measures $q_y$ are $F$ quasi-invariant and ergodic. 
\item \label{conc:all points}
$q_y(\psi^{-1}(y))=1$ for every $y\in Y$.
\item
\label{conc:invarAlg}
Let $\mathscr{Z}$ be the $\sigma$-algebra of $F$ invariant sets.
Let $\mathscr{C} = \{ \psi^{-1}(B) : B \in \mathscr{Y}\}$.
Then $\mathscr{Z}$ and $\mathscr{Z}$ are $\mu$ equivalent.
\item
\label{conc:map}
If there is another collection $(Y',\mathscr{Y}',\psi',q')$ satisfying 1.\ through 4.\ then there exists a measurable map $\Theta : Y \to Y'$ that is an isomorphism between $(Y,\mathscr{Y},\rho)$ and $(Y',\mathscr{Y}',\rho')$ such that $q'_{\Theta(y)}\sim q_y$ for $\rho$ almost every $y \in Y$.
\end{enumerate}
\end{theorem}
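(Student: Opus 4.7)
The plan is to prove this by constructing $\psi$ as (essentially) the quotient map onto the space of ergodic components, then obtaining $q_y$ by disintegration of $\mu$ along $\psi$. Since the underlying space can be taken to be standard Borel, the $\sigma$-algebra $\mathscr{Z}$ of $F$-invariant sets is $\mu$-equivalent to one generated by a countable family of $F$-invariant Borel functions $h_1,h_2,\dots$ (for instance the Ces\`aro averages of a dense countable family in $\mathsf{C}(X)$). First I would let $\psi : X \to Y := \mathbb{R}^{\mathbb{N}}$ send $x$ to $(h_n(x))$ and take $\rho = \psi_* \mu$; this already makes $(Y,\mathscr{Y})$ a standard Borel space and gives item~\ref{conc:invarAlg} essentially by construction (the pullback $\mathscr{C}$ of $\mathscr{Y}$ equals $\mathscr{Z}$ up to $\mu$-null sets).

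Next I would disintegrate $\mu = \int q_y \intd \rho(y)$ along $\psi$, which is available because $(X,\mathscr{B})$ is a standard Borel space and $\psi$ is a Borel map into a standard Borel space. This produces the family $y \mapsto q_y$ with the Borel dependence required by item~\ref{conc:borel} and the fiber concentration of item~\ref{conc:all points} for $\rho$-almost every $y$; the version then has to be extended to every $y \in Y$, which is a routine redefinition on a $\rho$-null set.

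The main content is proving that $q_y$ is $F$-quasi-invariant and ergodic for $\rho$-almost every $y$. For quasi-invariance, since $F^{-1}\mathscr{Z} = \mathscr{Z}$ the pushforward decomposition of $F_*\mu$ has the same factor $\rho$, so uniqueness of disintegration forces $F_* q_y \sim q_y$ almost surely; the Radon--Nikod\'ym derivative $\mathrm{d} F_*\mu/\mathrm{d}\mu$ restricts correctly to each fiber because it is measurable with respect to $\mathscr{B}$ and not $\mathscr{Z}$. For ergodicity, any $F$-invariant set $A \in \mathscr{Z}$ is $\mu$-equivalent to $\psi^{-1}(B)$ for some $B \in \mathscr{Y}$ by item~\ref{conc:invarAlg}, and then $q_y(A) = 1_B(y)$ almost surely, which is $0$ or $1$. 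The main obstacle in this step is the careful bookkeeping to ensure that the exceptional $\rho$-null sets arising from (i) countably many invariant sets, (ii) the Radon--Nikod\'ym derivative, and (iii) the disintegration identity can all be combined into a single $\rho$-null set off of which every $q_y$ is genuinely quasi-invariant and ergodic.

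Finally, for the uniqueness statement~\ref{conc:map}, given another such decomposition $(Y',\mathscr{Y}',\psi',q'_{y'})$ I would note that $\psi'$ is measurable with respect to $\mathscr{Z}$ (since its fibers are $F$-invariant and ergodic components are essentially unique), hence factors through $\psi$ up to a $\mu$-null set, yielding a Borel map $\Theta : Y \to Y'$ with $\Theta_*\rho = \rho'$. The equivalence $q'_{\Theta(y)} \sim q_y$ then follows because both measures are supported on the same ergodic component and are $F$-quasi-invariant ergodic, so either they agree up to equivalence or one is singular with respect to the other; singularity is excluded because they both appear in a disintegration of $\mu$. Standard measurable selection produces the Borel version of $\Theta$ needed for an isomorphism.
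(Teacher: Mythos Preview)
The paper does not prove this theorem at all: it is quoted verbatim from Schmidt~\cite[Theorems~6.6 and~6.7]{MR0578731} and used as a black box (the only thing the paper proves in this appendix is the $\sigma$-finite corollary and the subsequent lemmas). So there is no ``paper's own proof'' to compare against, and any correct argument you give would already go beyond what the authors do.

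Your outline is the standard route to ergodic decomposition and is broadly sound, but one detail deserves care in the quasi-invariant (as opposed to invariant) setting. Your parenthetical suggestion to realize the generators $h_n$ of $\mathscr{Z}$ as \Cesaro{} averages of a dense family in $\cont(X)$ is tailored to \emph{invariant} measures: for a merely quasi-invariant $\mu$ the Birkhoff averages of $f\circ F^n$ need not converge $\mu$-almost everywhere, so they do not produce $F$-invariant functions in general. You should instead appeal directly to the fact that on a standard Borel space every sub-$\sigma$-algebra is countably generated modulo $\mu$-null sets, and take $h_n$ to be indicators of a generating sequence of genuinely $F$-invariant sets. With that fix the construction of $\psi$ and the disintegration go through as you describe. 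Similarly, in your quasi-invariance step the assertion $F_* q_y \sim q_y$ needs the observation that $y\mapsto F_* q_y$ is another disintegration of $F_*\mu$ over the \emph{same} factor map $\psi$ (because $\psi\circ F = \psi$ $\mu$-a.e.), together with $F_*\mu \sim \mu$; uniqueness of disintegration then gives $F_* q_y \sim q_y$ for $\rho$-a.e.\ $y$, which is exactly what you want.
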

The following corollary is virtually the same as \cite[Theorem~6.9]{MR0578731}.
We reformulate it slightly for our purposes.

\begin{corollary}[cf.\ {\cite[Theorem~6.9]{MR0578731}}]
\label{cor:decompositionSchmidt}
Let $(X,\mathscr{B})$ be a measurable space and let $F : X \to X$ be a $\mathscr{B}$ measurable map.
Fix an $F$ invariant $\sigma$-finite measure $\nu$ on $(X,\mathscr{B})$.
There exists a Borel space $(Y,\mathscr{Y})$, a surjective Borel measurable map $\psi : X \to Y$, and a family $y \mapsto p_y$  of $\sigma$-finite Borel measures on $(X,\mathscr{B})$ with the following properties.
\begin{enumerate}
[label=\textup{\textbf{S\arabic*}}.,ref=\textup{\textbf{S\arabic*}}]
\item
\label{sch:mble}
The map $y \to p_y(A)$ is measurable for every $A \in \mathscr{B}$.
\item
\label{sch:disint}
Against all members of $\mathscr{B}$ one has $\nu = \displaystyle\int p_y d\rho(y)$ where $\rho=\psi\nu$.
\item
\label{sch:erg}
All of the measures $p_y$ are $F$ invariant and ergodic.
\item
\label{sch:carried}
All of the measures $p_y$ satisfy $p_y(X \setminus \psi^{-1}(y)) = 0$.
\item
\label{sch:invar}
Let $\mathscr{Z}$ be the $\sigma$-algebra of $F$ invariant sets.
Let $\mathscr{C} = \{ \psi^{-1}(B) : B \in \mathscr{Y} \}$.
Then $\mathscr{Z}$ and $\mathscr{C}$ are $\nu$ equivalent.
\item
\label{sch:mapInf}
If there is another such collection $(Y',\mathscr{Y}',\psi',p')$ satisfying \ref{sch:mble} through \ref{sch:invar} then there is a measurable map $\Theta : Y \to Y'$ that is an isomorphism between $(Y,\mathscr{Y},\rho)$ and $(Y',\mathscr{Y}',\rho')$ such that $p'_y \sim p_{\Theta(y)}$ for $\rho$ almost every $y$.
\end{enumerate}
\end{corollary}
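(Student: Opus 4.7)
The plan is to reduce the $\sigma$-finite $F$-invariant setting to the probability $F$-quasi-invariant setting already handled by Theorem~\ref{thm:decompositionSchmidt}, following Schmidt's argument for Theorem~6.9 in \cite{MR0578731}. Since $\nu$ is $\sigma$-finite we can choose a probability measure $\mu$ on $(X,\mathscr{B})$ equivalent to $\nu$. Then $\mu$ is $F$-quasi-invariant because $F$-invariance of $\nu$ forces $F$ to preserve $\nu$-null sets, which coincide with $\mu$-null sets. Applying Theorem~\ref{thm:decompositionSchmidt} to $\mu$ produces the standard Borel space $(Y,\mathscr{Y})$, the map $\psi : X \to Y$, the measure $\rho := \psi\mu$, and the family $y\mapsto q_y$ of ergodic $F$-quasi-invariant probability measures satisfying \ref{conc:borel} through \ref{conc:map}.

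I would next upgrade the $q_y$ to the desired $F$-invariant $\sigma$-finite components by rescaling with the Radon-Nikodym derivative $h := d\nu/d\mu$, defining $p_y(A) := \int_A h\intd q_y$. Because $h$ is $\mu$-almost-everywhere finite and $\mu = \int q_y\intd\rho(y)$, Fubini implies $h$ is $q_y$-almost-everywhere finite for $\rho$-almost every $y$, so each such $p_y$ is $\sigma$-finite. The disintegration identity \ref{sch:disint} is then Fubini again:
\[
\int p_y(A)\intd\rho(y) = \int\!\int_A h\intd q_y\intd\rho(y) = \int_A h\intd\mu = \nu(A).
\]
Property \ref{sch:mble} is inherited from its analogue for the $q_y$, property \ref{sch:carried} follows from \ref{conc:all points}, and property \ref{sch:invar} follows from \ref{conc:invarAlg} combined with $\nu\sim\mu$. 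Ergodicity of each $p_y$ transfers from that of $q_y$ since $p_y\sim q_y$ and ergodicity of a quasi-invariant measure depends only on its null sets; the uniqueness clause \ref{sch:mapInf} is inherited from \ref{conc:map} via the same equivalences.

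The main obstacle is $F$-invariance of each $p_y$. Because $q_y$ is ergodic as an $F$-quasi-invariant measure, any $F$-invariant $\sigma$-finite measure absolutely continuous with respect to $q_y$ is unique up to a positive scalar. Pushing the disintegration forward by $F$ and using $F_*\nu = \nu$ yields $\nu = \int F_*p_y\intd\rho(y)$, a second decomposition of $\nu$ whose components $F_*p_y$ are equivalent to $q_y$ (since $F_*q_y\sim q_y$ by quasi-invariance). The uniqueness just noted forces $F_*p_y = c(y)p_y$ for a measurable positive scalar $c(y)$, and $F_*\nu = \nu$ then compels $c \equiv 1$ $\rho$-almost everywhere, completing the construction.
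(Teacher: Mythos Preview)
Your approach is exactly the paper's: pick a probability measure $\mu\sim\nu$, apply Theorem~\ref{thm:decompositionSchmidt} to $\mu$ to obtain $(Y,\mathscr{Y},\psi,q)$, and set $p_y = h\,q_y$ with $h=d\nu/d\mu$ (the paper writes this as $p_y=\tfrac{1}{f}q_y$ with $\mu=f\nu$). The verification of \ref{sch:mble}, \ref{sch:disint}, \ref{sch:carried}, \ref{sch:invar}, \ref{sch:mapInf} and of ergodicity proceeds just as you indicate; the paper in fact says less, declaring these checks ``straightforward''.

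One point deserves tightening. In your argument for $F$-invariance you invoke ``the uniqueness just noted'', namely that an $F$-\emph{invariant} $\sigma$-finite measure in the class of $q_y$ is unique up to scalar, to conclude $F_*p_y=c(y)p_y$. But at that stage neither $p_y$ nor $F_*p_y$ is yet known to be $F$-invariant, so that uniqueness statement does not apply and the step is circular as written. The intended mechanism is rather uniqueness of the \emph{disintegration}: since $\psi^{-1}(y)$ is $q_y$-almost $F$-invariant (by ergodicity of $q_y$ and $q_y(\psi^{-1}(y))=1$), the family $y\mapsto F_*p_y$ is again a disintegration of $\nu=F_*\nu$ over $\psi$ with fibers supported on $\psi^{-1}(y)$, and standard disintegration uniqueness forces $F_*p_y=p_y$ for $\rho$-almost every $y$. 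Alternatively one can check directly that the Radon--Nikodym cocycle $d(F_*\mu)/d\mu$ restricts to $d(F_*q_y)/dq_y$ for $\rho$-almost every $y$, from which $F_*(h\,q_y)=h\,q_y$ follows by a short computation. Either route closes the gap; the overall strategy is sound and matches the paper.
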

\begin{proof}
Let $\mu$ be  a probability measure that is equivalent to $\nu$.
Write $\mu = f \nu$ where $f$ is a positive, measurable function.
Theorem~\ref{thm:decompositionSchmidt} applied to $(X,\mathscr{B},F,\mu)$ gives $(Y,\mathscr{Y},\psi,q)$ satisfying \ref{sch:mble} through \ref{sch:invar} of Theorem~\ref{thm:decompositionSchmidt}.
Put $p_y = \frac 1 f q_y$.
It is straightforward to verify that $p$ is a disintegration of $\nu$ and therefore satisfies \ref{sch:disint} and \ref{sch:erg}.
It inherits the other properties from $q_y$. 
\end{proof}

Fix an interval exchange transformation $T$ and $f : [0,1) \to \mathbb{R}$ a mean-zero step function.
Take $X = [0,1) \times \mathbb{R}$ and let $\mathscr{B}$ be the Borel $\sigma$-algebra on $X$.
Apply Corollary~\ref{cor:decompositionSchmidt} with $F = T_f$ and $\nu = \haar$ to get $(Y,\mathscr{Y})$, the map $\psi$ and the family $y \mapsto p_y$ with the stated properties.
Write $\rho = \psi \haar$.
Let $\mathscr{Z}$ be the $\sigma$-algebra of $T_f$ invariant sets and let $\mathscr{C} = \{ \psi^{-1}(B) : B \in \mathscr{Y} \}$.
For $b \in \mathbb{R}$ define $V^b : X \to X$ by $V^b(x,t) = (x,t+b)$.
In preparation for the proof of Theorem~\ref{thm:gettingInvariance} we verify the following lemmas.

\begin{lemma}
\label{lem:transEquivar}
For every $b \in \mathbb{R}$ and $\haar$ almost every $(x,t)$ the measures $V^b p_{\psi(x,t)}$ and $p_{\psi(x,t+b)}$ are equivalent.
\end{lemma}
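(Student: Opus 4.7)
The plan is to build a second ergodic decomposition of $\haar$ under $T_f$ by translating the first through $V^b$, and then to invoke the uniqueness clause \ref{sch:mapInf} of Corollary~\ref{cor:decompositionSchmidt}. The key elementary facts that I would use throughout are that $V^b$ preserves $\haar$ and that $V^b$ commutes with $T_f$: indeed $V^b T_f(x,t) = (Tx, t + f(x) + b) = T_f V^b(x,t)$.

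First I would define $\tilde{\psi} := \psi \circ V^{-b}$ and $\tilde{p}_y := V^b p_y$ for each $y \in Y$, and then verify that $(Y, \mathscr{Y}, \tilde{\psi}, \tilde{p})$ is another ergodic decomposition of $\haar$ in the sense of Corollary~\ref{cor:decompositionSchmidt}. Each $\tilde{p}_y$ is $T_f$-invariant and ergodic because $V^b$ conjugates $T_f$ to itself; it is carried on $V^b \psi^{-1}(y) = \tilde{\psi}^{-1}(y)$; and $\tilde{\psi} \haar = \rho$ together with $\int \tilde{p}_y \, d\rho(y) = V^b \haar = \haar$ are immediate. The one mildly nontrivial condition is \ref{sch:invar}, for which one checks that the $\sigma$-algebra $\tilde{\mathscr{C}} = V^b \mathscr{C}$ is $\haar$-equivalent to $\mathscr{Z}$; this follows because $\mathscr{Z}$ is stable (set-theoretically) under $V^b$ by commutativity, and $\mathscr{C}$ is $\haar$-equivalent to $\mathscr{Z}$ by construction.

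Next, \ref{sch:mapInf} will furnish a measurable isomorphism $\Theta: (Y, \mathscr{Y}, \rho) \to (Y, \mathscr{Y}, \rho)$ with $\tilde{p}_y \sim p_{\Theta(y)}$ for $\rho$-almost every $y$. To identify what $\Theta$ does I would use the support constraint: $\tilde{p}_y$ is carried on $\tilde{\psi}^{-1}(y)$ while $p_{\Theta(y)}$ is carried on $\psi^{-1}(\Theta(y))$, so equivalence of these nonzero measures forces $\tilde{p}_y$ to live on the intersection $\tilde{\psi}^{-1}(y) \cap \psi^{-1}(\Theta(y))$, where by definition $\psi(x,t) = \Theta(y) = \Theta(\tilde{\psi}(x,t))$. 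Integrating this identity against $\rho$ via \ref{sch:disint} gives $\psi(x,t) = \Theta(\psi(x, t-b))$ for $\haar$-almost every $(x,t)$, and shifting by $V^b$ (which preserves $\haar$) rewrites this as $\Theta(\psi(x,t)) = \psi(x, t+b)$ for $\haar$-almost every $(x,t)$. Substituting into $\tilde{p}_{\psi(x,t)} \sim p_{\Theta(\psi(x,t))}$ yields the desired $V^b p_{\psi(x,t)} \sim p_{\psi(x, t+b)}$.

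The only step that is not purely mechanical is the identification of $\Theta$ on the nose from the support information; here the hard part is organizing the two null sets (one where $\tilde{p}_y \not\sim p_{\Theta(y)}$ in $Y$, and one where $\tilde{p}_y$ escapes the intersection of supports in $X$) so that everything can be aggregated into a single $\haar$-null exceptional set in $X$. Everything else is a commutativity bookkeeping exercise.
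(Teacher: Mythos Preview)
Your proposal is correct and follows essentially the same approach as the paper: define a second decomposition $(\tilde\psi,\tilde p)=(\psi\circ V^{-b},\,V^b p)$, verify \ref{sch:mble}--\ref{sch:invar}, apply the uniqueness clause \ref{sch:mapInf} to obtain $\Theta$, and then use the carrier condition \ref{sch:carried} to identify $\Theta\circ\tilde\psi$ with $\psi$ on an $\haar$-conull set. Your explicit remark that \ref{sch:invar} follows because $V^b$ preserves $\mathscr{Z}$ set-theoretically is a point the paper leaves implicit, but otherwise the arguments coincide step for step.
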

\begin{proof}
Define $p'_y = V^b p_y$ and $\psi'(x,t) = \psi(V^{-b}(x,t)) = \psi(x,t-b)$.
We claim that $(Y,\mathscr{Y},\psi',p')$ satisfies \ref{sch:mble} through \ref{sch:invar} of Theorem~\ref{cor:decompositionSchmidt}.
This is easily verified: we only check \ref{sch:disint} here by observing that $\psi' \haar = \psi \haar$ and calculating
\[
\iint 1_B \intd p'_y \intd \rho(y)
=
\iint 1_{V^{-b}B} \intd p_y \intd \rho(y)
=
\int 1_{V^{-b} B} \intd \haar
=
\int 1_B \intd \haar
\]
for all $B$ in $\mathscr{B}$.

We get from \ref{sch:mapInf} an automorphism $\Theta : Y \to Y$ such that $p'_{\Theta(y)} \sim p_y$ for $\rho$ almost every $y$.
Thus $p'_{\Theta(\psi(x,t))} \sim p_{\psi(x,t)}$ for $\haar$ almost every $(x,t)$.
So for $\haar$ almost every $(x,t)$ we have that the intersection $\psi^{-1}(\psi(x,t)) \cap (\psi')^{-1}(\Theta(\psi(x,t)))$ is co-null for both measures.
It follows that for $\haar$ almost every $(x,t)$ we have both $p_{\psi(z)} = p_{\psi(x,t)}$ and $p'_{\psi'(z)} = p'_{\Theta(\psi(x,t))}$ for $p_{\psi(x,t)}$ almost every $z$.
In conclusion $p'_{\psi'(z)} = p_{\psi(z)}$ for $\haar$ almost every $z \in [0,1) \times \mathbb{R}$.
In other words $V^b \psi_{\psi(x,t+b)} \sim p_{\psi(x,t)}$ for $\haar$ almost every $(x,t)$.
\end{proof}

Extending $T$ to $[0,1) \times \mathbb{R}$ by $T(x,t) = (Tx,t)$ we have the following lemma as well, whose proof is almost identical to that of the previous lemma.

\begin{lemma}
\label{lem:ietEquivar}
For $\haar$ almost every $(x,t)$ the measures $T p_{\psi(x,t)}$ and $p_{\psi(Tx,t)}$ are equivalent.
\end{lemma}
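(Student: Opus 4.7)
The plan is to mimic the proof of Lemma~\ref{lem:transEquivar} with the extended map $T$ in place of the translation $V^b$. First I would define $p'_y = T p_y$ and $\psi'(x,t) = \psi(T^{-1}(x,t)) = \psi(T^{-1}x,t)$, and verify that the collection $(Y,\mathscr{Y},\psi',p')$ satisfies conditions \ref{sch:mble} through \ref{sch:invar} of Corollary~\ref{cor:decompositionSchmidt}. Measurability, the carrier property, and the disintegration property transfer essentially verbatim from the $V^b$ case using the $\haar$-invariance of $T$; for instance
\[
\iint 1_B \intd p'_y \intd \rho(y) = \iint 1_{T^{-1}B} \intd p_y \intd \rho(y) = \haar(T^{-1}B) = \haar(B)
\]
and $\psi'\haar = \psi\haar = \rho$.

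The delicate point is \ref{sch:erg}, namely that $Tp_y$ is $T_f$-invariant and ergodic, and I expect this to be the main obstacle since, unlike $V^b$, the extended $T$ does not commute with $T_f$. A useful identity is $T = V^{-f\circ T^{-1}}\circ T_f$, where $V^{-f\circ T^{-1}}$ denotes the measurable vertical shift $(x,t)\mapsto (x,t-f(T^{-1}x))$, which together with $T_f$-invariance of $p_y$ gives $Tp_y = V^{-f\circ T^{-1}}p_y$. Combining this factorisation with Lemma~\ref{lem:transEquivar}, applied to the vertical-shift ingredient, should yield the required $T_f$-invariance of $Tp_y$ and allow one to match the $\sigma$-algebra of $\psi'$-level sets with $\mathscr{Z}$ modulo $\haar$-null sets, as required for \ref{sch:invar}.

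Once the corollary applies, its uniqueness clause \ref{sch:mapInf} produces an automorphism $\Theta:Y\to Y$ with $p'_{\Theta(y)}\sim p_y$ for $\rho$-almost every $y$. Exactly as in the derivation of Lemma~\ref{lem:transEquivar}, comparing the supports of $Tp_{\Theta(y)}$ and $p_y$ identifies $\Theta(\psi(x,t)) = \psi(T^{-1}x,t)$ for $\haar$-almost every $(x,t)$, so that $Tp_{\psi(T^{-1}x,t)}\sim p_{\psi(x,t)}$. Replacing $x$ by $Tx$ yields the stated equivalence $Tp_{\psi(x,t)}\sim p_{\psi(Tx,t)}$.
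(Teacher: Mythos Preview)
Your overall plan coincides with the paper's: it too says the proof is ``almost identical'' to that of Lemma~\ref{lem:transEquivar}, i.e.\ set $p'_y=Tp_y$, $\psi'=\psi\circ T^{-1}$, verify \ref{sch:mble}--\ref{sch:invar}, and invoke \ref{sch:mapInf}. You also correctly isolate the one place where the argument is \emph{not} identical: unlike $V^b$, the extended map $T$ does not commute with $T_f$, so \ref{sch:erg} (and with it \ref{sch:invar}) is no longer automatic.

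The gap is in how you propose to repair this. From $T=V^{-f\circ T^{-1}}\circ T_f$ you correctly deduce $Tp_y=V^{-f\circ T^{-1}}p_y$, but then you appeal to Lemma~\ref{lem:transEquivar} ``applied to the vertical-shift ingredient'' to conclude $T_f$-invariance of $Tp_y$. That lemma concerns \emph{constant} shifts $V^b$ and gives an equivalence $V^bp_{\psi(x,t)}\sim p_{\psi(x,t+b)}$ between ergodic components; it says nothing about $T_f$-invariance of the pushforward of a single $p_y$ by a \emph{fibre-dependent} shift. Concretely, $T_f$-invariance of $Tp_y$ is equivalent (via $T^{-1}T_fT=T_{f\circ T}$) to $T_{f\circ T}$-invariance of $p_y$, and since $T_{f\circ T}=V^{f}T_fV^{-f}$ this asks that $V^{-f}p_y$ be $T_f$-invariant --- not something Lemma~\ref{lem:transEquivar} provides. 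The paper does not address this point either; it simply asserts the proofs are almost identical.

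A clean way to get exactly what is needed later (in Lemma~\ref{lem:allSame}) avoids \ref{sch:erg} altogether. Since $\psi$ is $T_f$-invariant a.e., one has $p_{\psi(Tx,\,t+f(x))}=p_{\psi(x,t)}$ for a.e.\ $(x,t)$. As $f$ takes only finitely many values, applying Lemma~\ref{lem:transEquivar} with $b=-f(x)$ on each level set of $f$ gives
\[
p_{\psi(Tx,t)}\;\sim\;V^{-f(x)}\,p_{\psi(x,t)}\qquad\text{for $\haar$-a.e.\ }(x,t).
\]
Because $V^{-f(x)}$ commutes with $V^b$ and is a measure-class isomorphism, this immediately yields $p_{\psi(Tx,t)}\perp V^bp_{\psi(Tx,t)}\Leftrightarrow p_{\psi(x,t)}\perp V^bp_{\psi(x,t)}$, which is precisely the $T$-invariance of $H$ used in Lemma~\ref{lem:allSame}. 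This bypasses the question of whether $Tp_y$ is $T_f$-invariant.
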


We also need the following lemmas.

\begin{lemma}
\label{lem:allSame}
Fix $b \in \mathbb{R}$.
If $p_y$ and $V^b p_y$ are not mutually singular for a set of positive $\rho$ measure then $p_y$ and $V^b p_y$ are not mutually singular for $\rho$ almost every $y$.
\end{lemma}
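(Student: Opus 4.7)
Set $A = \{y : p_y \not\perp V^b p_y\}$ and $\tilde A = \psi^{-1}(A)$. The plan is to show $\tilde A$ is either $\haar$-null or $\haar$-conull, which by $\rho = \psi \haar$ gives the desired dichotomy for $A$. Assuming $A \in \mathscr{Y}$ (justified below), the set $\tilde A$ lies in $\mathscr{C}$, so by \ref{sch:invar} it is $T_f$-invariant modulo $\haar$-null. The core of the argument is to promote this to invariance under the full $\mathbb{R}$-action $\{V^c : c \in \mathbb{R}\}$.

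For this promotion I would use Lemma~\ref{lem:transEquivar} together with the mutual singularity of distinct $p_y$'s (whose supports $\psi^{-1}(y)$ are disjoint by \ref{sch:carried}) to produce, for each $c \in \mathbb{R}$, a $\rho$-almost-everywhere-defined measurable map $\sigma_c : Y \to Y$ satisfying $V^c p_y \sim p_{\sigma_c y}$; in particular $A = \{y : \sigma_b y = y\}$, so measurability of $A$ reduces to measurability of $\sigma_b$. Because $V^{c+d} = V^c V^d$ the maps satisfy $\sigma_{c+d} = \sigma_c \circ \sigma_d$ $\rho$-a.e., so in particular they pairwise commute. For every $c$ and every $y \in A$,
\[
\sigma_b(\sigma_c y) = \sigma_c(\sigma_b y) = \sigma_c y,
\]
showing $A$ is $\sigma_c$-invariant $\rho$-a.e. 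Through the identity $\psi(x, t+c) = \sigma_c \psi(x,t)$ (valid $\haar$-a.e.\ by Lemma~\ref{lem:transEquivar}) this transfers to $V^c \tilde A = \tilde A$ modulo $\haar$-null, for every $c \in \mathbb{R}$.

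With $V^c$-invariance in hand for all $c$, a Fubini argument yields $\tilde A = B \times \mathbb{R}$ modulo $\haar$-null for some measurable $B \subset [0,1)$: since $\haar(\tilde A \symdiff V^c \tilde A) = 0$ for every $c$, Tonelli gives that for $\lambda$-almost every $x \in [0,1)$ the slice $\{t : (x,t) \in \tilde A\}$ is Lebesgue-translation invariant modulo null for almost every $c \in \mathbb{R}$, and a standard Steinhaus-type argument then forces each such slice to be Lebesgue-null or conull. Finally, $T_f$-invariance of $\tilde A$ forces $T^{-1} B = B$ modulo Lebesgue-null, and ergodicity of $T$ implies $B$, and hence $\tilde A$, is null or conull.

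The main technical obstacle is the Fubini quantifier reversal. $V^c$-invariance of $\tilde A$ is available for each $c$ only modulo a $c$-dependent null set, yet to carry out the slicewise analysis one needs that for $\lambda$-a.e.\ $x$ the slice is translation-invariant for $\lambda$-a.e.\ $c$. Once this reversal is in place, the standard fact that a Lebesgue-measurable subset of $\mathbb{R}$ almost-invariant under almost every real translation must be null or conull completes the argument.
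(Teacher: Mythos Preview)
Your argument is correct, but it takes a longer route than the paper and bypasses a lemma that was put there precisely to make this step short. The paper works directly with the set $H=\{(x,t):p_{\psi(x,t)}\not\perp V^b p_{\psi(x,t)}\}$ on $X$ (which is exactly your $\tilde A$) and never descends to $Y$ or builds the maps $\sigma_c$. Instead of using $T_f$-invariance via \ref{sch:invar}, it invokes Lemma~\ref{lem:ietEquivar} to see that $H$ is invariant under the \emph{horizontal} map $(x,t)\mapsto(Tx,t)$; ergodicity of $T$ on $[0,1)$ then gives immediately that each horizontal slice $\{x:(x,t)\in H\}$ is null or conull, so $H$ agrees modulo null with $[0,1)\times G$ for some $G\subset\mathbb{R}$ of positive measure. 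Only then does it use Lemma~\ref{lem:transEquivar} to get $V^a$-invariance of $H$ for every $a$, which forces $G$ to be Lebesgue-conull. Your approach reverses the order (vertical invariance first, then base ergodicity via $T_f$), and the price is the extra machinery of $\sigma_c$, the measurability checks you flag, and a slightly more delicate Fubini step. What the paper's order buys is that horizontal slices are already $\{0,1\}$-valued before you touch the vertical direction, so the Steinhaus-type conclusion is immediate; what your order buys is a self-contained argument that avoids Lemma~\ref{lem:ietEquivar} altogether, at the cost of reproving its content inside the $\sigma_c$ formalism.
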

\begin{proof}
Put $H = \{(x,t) \in X : p_{\psi(x,t)} \not\perp V^b p_{\psi(x,t)} \}$.
We have $\haar(H) > 0$ by \ref{sch:carried} and our hypothesis.
It follows from Lemma~\ref{lem:ietEquivar} that $H$ is $T$ invariant.
Since $T$ is ergodic on $[0,1)$ the set $\{ x \in [0,1) : (x,t) \in H \}$ has either null of full Lebesgue measure for every $t \in \mathbb{R}$.
So our hypothesis gives a positive measure set of $t \in \mathbb{R}$ such that $\{ x \in [0,1) : (x,t) \in H \}$ has full Lebesgue measure.
But Lemma~\ref{lem:transEquivar} also implies that $H$ is $V^a$ invariant for all $a \in \mathbb{R}$.
So $H$ must be a co-null set.
\end{proof}

\begin{lemma}
If $\eta_y$ is the $\rho$ almost-surely defined normalized restriction of $p_y$ to $X_B$ then $y \mapsto \eta_y$ is an ergodic decomposition of $\haar_B$ for the transformation $S_{f,B}$.
\end{lemma}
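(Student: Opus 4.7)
The plan is to verify the three constituents of an ergodic decomposition of $\haar_B$ under $S_{f,B}$: (i) that $\eta_y$ is a well-defined Borel probability on $X_B$ for $\rho$-almost every $y$; (ii) that $\haar_B$ disintegrates as $\int \eta_y \intd \rho'(y)$ for a suitable probability $\rho'$ on $Y$; and (iii) that $\eta_y$ is $S_{f,B}$-invariant and $S_{f,B}$-ergodic for $\rho$-almost every $y$.

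For (i), integrating \ref{sch:disint} against $1_{X_B}$ gives $\haar(X_B) = \int p_y(X_B) \intd \rho(y)$, which is finite; hence $p_y(X_B) < \infty$ for $\rho$-almost every $y$. After discarding the $\rho$-null collection of $y$ with $p_y(X_B) = 0$ (which contributes nothing to $\haar$ on $X_B$), the normalized restriction $\eta_y = p_y|_{X_B}/p_y(X_B)$ is well-defined. For (ii), setting
\[
\rho'(E) = \frac{1}{\haar(X_B)} \int_E p_y(X_B) \intd \rho(y)
\]
yields a Borel probability on $Y$, and specializing \ref{sch:disint} to Borel $A \subset X_B$ gives $\haar_B(A) = \int \eta_y(A) \intd \rho'(y)$ at once.

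For (iii), I would apply the classical induced transformation theorem of Kakutani to each triple $(X, p_y, T_f)$ with return set $X_B$. Each $p_y$ is $T_f$-invariant and $T_f$-ergodic by \ref{sch:erg}, and $T_f$ is $\haar$-conservative by Atkinson's theorem since $f$ has mean zero. Conservativity transfers to $p_y$ for $\rho$-almost every $y$: if a wandering Borel set $W$ for $T_f$ had $p_y(W) > 0$ on a $\rho$-positive set of $y$, then \ref{sch:disint} would force $\haar(W) > 0$, contradicting conservativity. Kakutani's theorem then provides that the first-return map of $T_f$ to $X_B$ preserves $\eta_y$ and is $\eta_y$-ergodic. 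One final compatibility check is that this first-return map agrees $p_y|_{X_B}$-almost everywhere with $S_{f,B}$; this is immediate because any $\haar_B$-null exceptional set $N \subset X_B$ satisfies $\int p_y(N) \intd \rho(y) = \haar(N) = 0$ and is therefore $p_y$-null for $\rho$-almost every $y$.

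There is no deep obstacle; the only care required is in handling conservativity of the ergodic components $p_y$ and in identifying $S_{f,B}$ with the first-return map from the viewpoint of each component. Both reduce to the fact that $\haar$-null sets are $p_y$-null $\rho$-almost surely, which is a direct consequence of \ref{sch:disint}.
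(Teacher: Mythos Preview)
Your argument is correct and tracks the paper's own (two-sentence) proof: the disintegration of $\haar_B$ comes from \ref{sch:disint}, and invariance and ergodicity of $\eta_y$ under $S_{f,B}$ come from \ref{sch:erg} together with the standard behaviour of induced transformations. Your write-up is in fact more careful than the paper's, notably in introducing the reweighted probability $\rho'$ on $Y$.

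One small tightening: the sentence ``conservativity transfers to $p_y$ for $\rho$-almost every $y$'' has a quantifier slip as written, since the wandering set $W$ witnessing non-conservativity could a priori depend on $y$, and your contradiction only handles a single fixed $W$. However, you do not actually need full conservativity of $p_y$ to invoke Kakutani---only that the forward and backward first-return times to $X_B$ are finite $p_y$-a.e. Each of these failure sets is a \emph{single} $\haar$-null Borel set (by $\haar$-conservativity of $T_f$), so your own final observation that $\haar$-null sets are $p_y$-null for $\rho$-a.e.\ $y$ disposes of them directly.
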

\begin{proof}
Since almost every $p_y$ is ergodic for $T_f$ the restriction $\eta_y$ is almost surely ergodic for the induced transformation $S_{f,B}$.
That $y \mapsto p_y$ is a disintegration of $\haar_B$ follows from \ref{sch:disint}.
Thus $y \mapsto \eta_y$ is an ergodic decomposition of $\haar_B$.
\end{proof}

We are now ready to prove Theorem~\ref{thm:gettingInvariance}.

\begin{proof}
[Proof of Theorem~\ref{thm:gettingInvariance}]
Fix $b \in \mathbb{R}$ such that the set
\[
\{ (x,t) \in X_B : \mu_{f,B,(x,t)} \not\perp V^b \mu_{f,B,(x,t)} \}
\]
has positive $\haar_B$ measure.
Uniqueness in the ergodic decomposition~\cite[Theorem~6.2]{MR2723325} implies
\begin{equation}
\label{eqn:nonSingularSet}
\{ (x,t) \in X_B : p_{\psi(x,t)}|X_B \not\perp V^b \big( p_{\psi(x,t)}|X_B \big) \}
\end{equation}
has positive $\haar_B$ measure.
If $p_{\psi(x,t)}|X_B \not\perp V^b \big( p_{\psi(x,t)}|X_B \big)$ then we get from the Lebesgue decomposition theorem a measure $\lambda$ on $[0,1) \times \mathbb{R}$ which is absolutely continuous with respect to both measures.
So we have $\lambda \ll p_{\psi(x,t)}|X_B \ll p_{\psi(x,t)}$ and $\lambda \ll V^b (p_{\psi(x,t)}|X_B) \ll V^b p_{\psi(x,t)}$ whence $p_{\psi(x,t)}$ and $V^b p_{\psi(x,t)}$ are not mutually singular.
Since \eqref{eqn:nonSingularSet} has positive measure Lemma~\ref{lem:allSame} implies $p_{\psi(x,t)}\not \perp V^b p_{\psi(x,t)}$ for almost every $(x,t)$. 
Since ergodic measures are either mutually singular or equivalent we have 
\begin{equation}
\label{eq:equiv meas}
p_{\psi(x,t)} \sim V^bp_{\psi(x,t)}
\end{equation}
for almost every $(x,t)$. 

Our goal is to prove that $b$ is an essential value of $f$.
Fix $Z \in \mathscr{Z}_f$ a $T_f$ invariant set.
We wish to prove that $\haar(Z \triangle (V^b)^{-1} Z) = 0$.
By \ref{sch:invar} we may assume there exists $S \subset Y$ with $Z = \psi^{-1}(S)$.
For such $Z$ we have $p_y(Z) \in \{0,1\}$ for $\rho$ almost every $y$.
But then \eqref{eq:equiv meas} implies $p_y(Z) = p_y(V^{-b} Z)$ for $\rho$ almost every $y$.
Finally
\[
\haar( Z \triangle V^{-b} Z )
=
\int p_y (Z \triangle V^{-b} Z) \intd \rho(y)
=
0
\]
as desired.
\end{proof}

\section{Quantitative unique ergodicity}
\label{apdx:quantitativeUniqueErgodicity}

It follows from work of Boshernitzan~\cite[Theorem~1.7]{MR808101} that every linearly recurrent interval exchange transformation is uniquely ergodic.
In this section we prove the following quantitative version of Boshernitzan's result.
Throughout this section we use $b$ to denote the number of intervals of an interval exchange transformation.

\begin{theorem}
\label{thm:powerSaving}
Let $T$ be a linearly recurrent interval exchange transformation.
There is $0<\gamma<1$ so that for any mean-zero step function $f$ we have
\begin{equation}
\label{eqn:powerSaving}
\left| \sum_{n=0}^{N-1} f(T^n x) \right| \le N^\gamma
\end{equation}
for all large enough $N$. 
\end{theorem}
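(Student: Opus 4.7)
The plan is to use Rauzy--Veech induction. Since $T$ is linearly recurrent, its Rauzy--Veech orbit remains in a compact region of parameter space and the induction matrices $A_n \in M_b(\mathbb{Z}_{\ge 0})$ are uniformly bounded. Consequently the lengths $|I_n|$ of the induced subintervals decay geometrically, $|I_n| \asymp \rho^n$ for some $\rho < 1$, and the heights of the associated Rokhlin towers over $I_n$ are comparable to $\rho^{-n}$ across all $b$ tower types.

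The technical heart is a Lyapunov gap for the Rauzy--Veech cocycle $Q_n := A_n \cdots A_1$ acting on $\mathbb{R}^b$. The top exponent equals $\lambda_1 = -\log \rho$, with one-dimensional Oseledets direction spanned by the vector of interval lengths of the induced IET $T^{(n)}$. What is needed is a second exponent $\lambda_2 < \lambda_1$ strictly. For bounded (i.e.\ linearly recurrent) cocycles this follows from a Perron--Frobenius argument: some finite concatenation of the $A_n$ must yield a strictly positive matrix, and compactness then furnishes uniform projective contraction on the complement of the leading direction.

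Next I would relate the Birkhoff sums of $f$ to this cocycle. For a mean-zero step function $f$, define its induced step function $f^{(n)}(y) := \sum_{i=0}^{r_n(y)-1} f(T^i y)$ on $I_n$, where $r_n(y)$ is the return time of $y$ to $I_n$ under $T$. When $f$ is piecewise constant on the Rauzy--Veech partition $\mathcal{P}_n$ of $[0,1)$, the vector of values of $f^{(n)}$ is obtained from the vector of values of $f$ by a cocycle closely related to $Q_n$; the mean-zero condition places this vector in the complement of the top Oseledets direction, yielding $\|f^{(n)}\|_\infty \le C e^{\lambda_2 n}$. A general step function is approximated by its piecewise-constant-on-$\mathcal{P}_n$ refinement: the $d$ discontinuities of $f$ meet at most $d$ atoms of $\mathcal{P}_n$, so the approximation error contributes only a bounded extra term.

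To conclude, given $N$ large, choose $n$ with $\rho^{-n} \asymp N$, i.e.\ $n \asymp \log N$. Linear recurrence guarantees that the orbit of $x$ of length $N$ consists of a bounded number of complete passes through the level-$n$ Rokhlin towers plus a remainder of comparable length, so $S_N f(x)$ is a bounded number of values of $f^{(n)}$ together with one partial sum of the same order. Both contribute at most $C\|f^{(n)}\|_\infty \le C N^{\lambda_2/\lambda_1}$, so any $\gamma \in (\lambda_2/\lambda_1, 1)$ works, uniformly over all mean-zero step functions $f$ (since the cocycle estimate is uniform once $f$ is projected onto the contracting subspace). The main obstacle is the Lyapunov gap $\lambda_2 < \lambda_1$; while folklore for bounded Rauzy--Veech cocycles (and implied by the Veech--Forni theorem in full generality), it must be proved by a careful Perron--Frobenius-type argument uniform along the trajectory.
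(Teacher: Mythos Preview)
Your broad strategy---renormalization combined with a Perron--Frobenius gap for the induction cocycle---is the same one the paper uses, though the paper induces on the leftmost interval rather than via Rauzy--Veech and phrases the gap as an angle-contraction estimate (Fact~\ref{fac:angleDecay}) rather than in Lyapunov-exponent language.

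The genuine gap is in your handling of step functions whose discontinuities do not lie at the breakpoints of $T$. The cocycle $Q_n$ acts on $\mathbb{R}^b$ and controls $\|f^{(n)}\|_\infty$ only when $f$ is piecewise constant on the $b$ intervals of $T$; for such $f$ the mean-zero condition puts the value vector in the hyperplane annihilated by the length vector and the gap gives sub-top-exponential growth. Your proposed fix---approximate $f$ by $\tilde f$ constant on the level-$n$ tower partition $\mathcal{P}_n$---does not close this: while the Birkhoff sum of $f-\tilde f$ is indeed $O(d\|f\|_\infty)$, the approximant $\tilde f$ is still not constant on the level-$0$ intervals, so $Q_n$ does not act on any ``vector of values'' of $\tilde f$, and bounding $\|\tilde f^{(n)}\|_\infty$ is the same problem as bounding $\|f^{(n)}\|_\infty$ up to $O(1)$. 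One can repair this by observing that the $d$ extra discontinuities make $f^{(k)}$ vary by at most $2d\|f\|_\infty$ on each interval of $T^{(k)}$, giving a recursion $v^{(k+1)}=B_k^{\mathsf T}v^{(k)}+O(1)$ with $\langle v^{(k)},\lambda^{(k)}\rangle=O(|I_k|)$, and then checking that the $O(1)$ perturbations, projected off the top direction at each level, stay in the contracting hyperplane and sum geometrically---but that is not the argument you wrote. The paper sidesteps all of this: it proves a visit-count discrepancy bound for the tower atoms $T^s I_{k,j}$ directly (Corollary~\ref{cor:hitDifference}) and then covers an \emph{arbitrary} interval $[a_{i-1},a_i)$ by roughly $x_i/|I_k|$ such atoms at a scale $k$ chosen so that $|I_k|\sqrt{N}$ is about $\sqrt{x_i}$, balancing the boundary error against the per-atom discrepancy. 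A smaller omission: your claim that the partial-tower remainder ``contributes at most $C\|f^{(n)}\|_\infty$'' needs the standard multi-scale telescoping through levels $n,n-1,\dots,0$, since a partial pass can have length comparable to $N$.
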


In fact, this result follows from Section~4 of \cite{chaikaConstantine}.
Most of this section constitutes a self-contained proof of Theorem~\ref{thm:powerSaving}, which we give for completeness.
Our interest in Theorem~\ref{thm:powerSaving} is in deducing from it that Property~\ref{fr:5} holds for every mean-zero step function $f : [0,1) \to \mathbb{R}$ and almost every $x$.
We begin with some notation for the induction scheme we will use throughout the proof of Theorem~\ref{thm:powerSaving}.

Fix a linearly recurrent interval exchange transformation $T$ with $b-1$ discontinuities.
Write $I_0$ for $[0,1)$ and let $I_{0,1},\dots,I_{0,b}$ be the intervals of continuity of $T$.
Define inductively $I_n = I_{n-1,1}$ and $I_{n,1},\dots,I_{n,b}$ as the intervals of the induced transformation $T|I_n$ on $I_n$. (See Figure~\ref{fig:inductionScheme} for a schematic.)
Since  we require defined linearly recurrent interval exchange transformations to satisfy the Keane condition, the induced transformation $T|I_n$ is also an exchange of $b$ intervals.
Given $\ell > k \ge 0$ define
\[
r_{k,\ell}(j) = \min \{ n \in \mathbb{N} : (T|I_k)^n I_{\ell,j} \subset I_\ell \}
\]
for all $1 \le j \le b$.
This is the first time the $T|I_k$ orbit of $I_{\ell,j}$ returns to $I_\ell$.
Write
\[
r_{k,\ell}(x) = \min \{ n \in \mathbb{N} : (T|I_k)^n x \in I_\ell \}
\]
for all $x \in I_k$.
Note that $r_{k,\ell}(j) = r_{k,\ell}(x)$ for all $x \in I_{\ell,j}$.
Define also for each $k \in \mathbb{N}$ a matrix $B_k$ with entries
\[
B_k(i,j) = \sum_{n=0}^{r_{k,k+1}(j)-1} 1_{I_{k,i}} \Big( (T|I_k)^n I_{k+1,j} \Big)
\]
for all $1 \le i,j \le b$ that count the number of visits of the $T|I_k$ orbit of $I_{k+1,j}$ to $I_{k,i}$ before the orbit visits $I_{k+1}$.
Therefore $B_k(1,j) = 1$ for all $1 \le j \le b$.
Note also that
\[
\nbar B_k e_j \nbar_1 = B_k(1,j) + \cdots + B_k(b,j) = r_{k,k+1}(j)
\]
for all $1 \le j \le b$ where $e_1,\dots,e_b$ is the standard basis of $\mathbb{R}^b$.

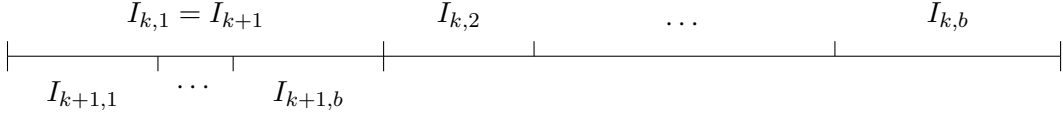
\begin{figure}[h]
\centering
\begin{tikzpicture}
\draw (-7,0) -- (7,0);
\draw (-7,-0.2) -- (-7,0);
\node [anchor=north] at (-6,-0.2) {$I_{k+1,1}$};
\draw (-5,-0.2) -- (-5,0);
\node [anchor=north] at (-4.5,-0.2) {$\cdots$};
\draw (-4,-0.2) -- (-4,0);
\node [anchor=north] at (-3,-0.2) {$I_{k+1,b}$};
\draw (-2,-0.2) -- (-2,0);
\draw (7,-0.2) -- (7,0);
\draw (-7,0) -- (-7,0.2);
\node [anchor=south] at (-4.5,0.2) {$I_{k,1} = I_{k+1}$};
\draw (-2,0) -- (-2,0.2);
\node [anchor=south] at (-1,0.2) {$I_{k,2}$};
\draw (0,0) -- (0,0.2);
\node [anchor=south] at (2,0.2) {$\cdots$};
\draw (4,0) -- (4,0.2);
\node [anchor=south] at (5.5,0.2) {$I_{k,b}$};
\draw (7,0) -- (7,0.2);
\end{tikzpicture}
\caption{The interval $I_k$ and some of its subintervals.}
\label{fig:inductionScheme}
\end{figure}
The entries of the matrix $B_{k,r} := B_k B_{k+1} \cdots B_{k+r}$ are
\[
B_{k,r}(i,j) = \sum_{n=0}^{r_{k,k+r+1}(j) - 1} 1_{I_{k,i}} \Big( (T|I_k)^n I_{k+r+1,j} \Big)
\]
and they count the number of visits of $I_{k+r+1,j}$ to $I_{k,i}$ under $T|I_k$ before it returns to $I_{k+r+1}$.
Therefore
\[
\nbar B_{k,r} e_j \nbar_1 = B_{k,r}(1,j) + \cdots + B_{k,r}(b,j) = r_{k,k+r+1}(j)
\]
for all $1 \le j \le b$.
Our proof of Theorem~\ref{thm:powerSaving} relies on the following facts.

\begin{fact}
\label{fact:returnRatio}
There is a constant $D_1 > 0$ such that $\frac{1}{D_1} < \frac{r_{k,l}(i)}{r_{k,l}(j)} < D_1$ for all $k > l\geq 0$ and all $1 \le i,j \le b$.
\end{fact}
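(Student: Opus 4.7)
The approach is to first prove the estimate at the base level $k=0$ using linear recurrence to pin down return times in terms of $|I_\ell|$, and then lift to arbitrary $k$ via the Rokhlin tower identity connecting returns under $T$ and under $T|I_k$.

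For the first stage, fix any $x \in I_{\ell,j}$ and set $r = r_{0,\ell}(j)$. The orbit segment $x, Tx, \dots, T^r x$ has length $r+1$, so by linear recurrence it is $c_2/(r+1)$-separated; since both $x$ and $T^r x$ lie in $I_\ell$, this forces $|I_\ell| \ge c_2/(r+1)$, whence $r_{0,\ell}(j) \ge c_2/|I_\ell| - 1$. For the matching upper bound, apply the $c_1/n$-density of orbits to the orbit of $Tx$: once $n > 2c_1/|I_\ell|$, this orbit must meet the interior of $I_\ell$, giving $r_{0,\ell}(j) \le 2c_1/|I_\ell| + 2$. These bounds are uniform in $j$, so for all $\ell$ with $|I_\ell|$ small enough we get a uniform bound on $r_{0,\ell}(i)/r_{0,\ell}(j)$.

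For the second stage, observe that because $T|I_k$ is the first-return map of $T$ to $I_k$ and $I_\ell \subset I_k$, for any $x \in I_{\ell,j}$ the first return to $I_\ell$ under $T$ is precisely
\[
r_{0,\ell}(j) = \sum_{n=0}^{r_{k,\ell}(j)-1} r_{0,k}\bigl( (T|I_k)^n x \bigr).
\]
Sandwiching each summand between $\min_i r_{0,k}(i)$ and $\max_i r_{0,k}(i)$ gives
\[
\frac{r_{0,\ell}(j)}{\max_i r_{0,k}(i)} \le r_{k,\ell}(j) \le \frac{r_{0,\ell}(j)}{\min_i r_{0,k}(i)}.
\]
Consequently
\[
\frac{r_{k,\ell}(i)}{r_{k,\ell}(j)} \le \frac{\max_m r_{0,k}(m)}{\min_m r_{0,k}(m)} \cdot \frac{r_{0,\ell}(i)}{r_{0,\ell}(j)},
\]
and by the first stage each factor on the right is bounded whenever $k$ and $\ell$ are large enough.

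The only technical nuisance, which is also the main obstacle, is bookkeeping at small indices: the lower bound in Stage~1 is useful only once $|I_\ell|$ is small compared to $c_2$. However, the Keane condition forces $|I_\ell| \to 0$, so only finitely many $\ell$ (and hence finitely many $k$) fall in the exceptional regime. The return-time data for these finitely many indices yields finitely many ratios, all of which are bounded; absorbing them into the constant from the large-index regime produces the uniform bound $D_1$.
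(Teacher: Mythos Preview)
Your proof is correct and follows essentially the same route as the paper: both establish the base-level estimate $r_{0,\ell}(j) \asymp |I_\ell|^{-1}$ from linear recurrence (your Stage~1 is a careful version of the paper's inequality $c_2 \le r_{0,\ell}(x)|I_\ell| \le c_1$), and both lift to general $k$ via the tower identity $r_{0,\ell}(j) = \sum_{n<r_{k,\ell}(j)} r_{0,k}((T|I_k)^n x)$ (your Stage~2 is the paper's inequality~\eqref{eqn:returnBreakdown} with $m=0$). The only cosmetic difference is that the paper asserts the clean two-sided bound directly and extracts the explicit constant $D_1 = c_1^2/c_2^2$, whereas you carry $\pm O(1)$ corrections and absorb the finitely many small indices into $D_1$ at the end.
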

\begin{proof}
Fix $x \in I_l$.
First note that
\begin{equation}
\label{eqn:returnBreakdown}
\min \{ r_{m,k}(z) : z \in I_m \}  r_{k,l}(x) \le r_{m,l}(x)  \le \max \{ r_{m,k}(z) : z \in I_m \} r_{k,l}(x)
\end{equation}
for all $l > k > m$ because each step in the $T|I_k$ orbit of $x$ involves a return of some point in $I_k$ to $I_k$ under $T|I_m$.
Now
\begin{equation}
\label{eqn:linRecIntervals}
c_2 \le r_{0,l}(x) |I_l| \le c_1
\end{equation}
for all $l$ by linear recurrence so
\begin{equation}
\label{eqn:cantThinkOfName}
\frac{c_2/|I_l|}{c_1/|I_k|} \le \frac{r_{0,l}(x)}{\max \{ r_{0,k}(z) : z \in I_0 \}} \le \frac{r_{0,l}(x)}{\min \{ r_{0,k}(z) : z \in I_0 \}} \le \frac{c_1/|I_l|}{c_2/|I_k|}
\end{equation}
for all $k$ and $l$.
Taking $m = 0$  in \eqref{eqn:returnBreakdown} and combining with \eqref{eqn:cantThinkOfName} shows that $D_1 = \dfrac{c_1^2}{c_2^2}$ works.
\end{proof}

\begin{fact}
\label{fact:intervalRatio}
There is a constant $D_2 > 0$ such that $\frac{1}{D_2} < \frac{|I_{k,j}|}{|I_{k,i}|} < D_2$ for all $1 \le i,j \le b$ and all $k \in \mathbb{N}$.
\end{fact}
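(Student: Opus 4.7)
The plan is to prove the stronger statement $c\,|I_k| \le |I_{k,j}| \le |I_k|$ for all $k \in \mathbb{N}$ and all $1 \le j \le b$, where $c > 0$ is a constant depending only on $T$; from this the conclusion follows immediately with $D_2 = 1/c$. The upper bound is automatic because the $I_{k,j}$'s partition $I_k$, so only the lower bound requires work.

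As in the proof of Fact~\ref{fact:returnRatio}, linear recurrence in the form \eqref{eqn:linRecIntervals} already yields $R := \max_{1 \le j \le b} r_{0,k}(j) \le c_1/|I_k|$. The key observation is that each interior endpoint of the partition $\{I_{k,j}\}_{j=1}^b$ of $I_k$ is a discontinuity of the induced map $T|I_k$, and at such a point $x$ there must exist some $0 \le i < r_{0,k}(x) \le R$ with $T^i x \in D$ or $T^i x \in \partial I_k$. Up to slightly enlarging the partition to absorb the preimages of $\partial I_k$ that appear, every such endpoint lies in $\mathcal{P}_N := \bigcup_{i=0}^{N} T^{-i} D$ for some $N$ bounded by a universal multiple $C R$ of $R$. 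Linear recurrence then gives $\eta(N) \ge c_3/N \ge c_3 |I_k|/(C c_1)$ for the minimum gap between consecutive points of $\mathcal{P}_N$, so $|I_{k,j}| \ge c_3 |I_k|/(C c_1)$ for every $j$, giving the result with $D_2 = C c_1/c_3$.

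The main subtlety will be verifying that $\partial I_k$ really is absorbed into $\mathcal{P}_N$ at a cost bounded uniformly in $k$. Unwinding the recursion $I_l = I_{l-1,1}$ expresses $|I_k|$ as $T^{-j}\beta$ for some $\beta \in D$ and $j$ at most a telescoping sum of earlier return times $r_{0,l}$ with $l < k$. Using $r_{0,l} \le c_1/|I_l|$ together with the well-known geometric decay of $|I_l|$ along the induction (a standard consequence of linear recurrence, equivalent to uniform boundedness of the Rauzy--Veech matrices) bounds this total by $C R$ for a universal $C$, which is exactly what the argument needs.
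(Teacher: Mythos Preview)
Your argument has a circularity problem. The ``geometric decay of $|I_l|$'' you invoke to bound the telescoping sum $\sum_{l<k} r_{0,l}$ by a constant multiple of $R$ is exactly Fact~\ref{fact:geometricInterval} in the paper, and the paper's proof of that fact \emph{uses} Fact~\ref{fact:intervalRatio}. So within the paper's logical order you cannot appeal to it here. You describe geometric decay as ``equivalent to uniform boundedness of the Rauzy--Veech matrices,'' but the induction scheme in this appendix is not Rauzy--Veech induction: one always induces on the leftmost subinterval $I_{k,1}$, not on the interval dictated by the Rauzy--Veech algorithm, and boundedness results for the latter do not transfer automatically. Without an independent proof of geometric decay (which you do not supply) you have no control on $N$, and the handling of the right endpoint of $I_k$ --- which you correctly flag as the main subtlety --- collapses.

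The paper's argument is shorter and avoids tracing $\partial I_k$ back through the recursion at all. It simply observes that $T^{r_{0,k}(j)}$ is continuous on $I_{k,j}$ while $T|I_k$ is discontinuous at its endpoints, and reads off
\[
\frac{c_2}{\max_j r_{0,k}(j)} \le |I_{k,j}| \le \frac{c_1}{\min_j r_{0,k}(j)}
\]
directly from the $c_1/n$-density and $c_2/n$-separation of the discontinuities of powers of $T$. Fact~\ref{fact:returnRatio}, already available, bounds $\max_j r_{0,k}(j)/\min_j r_{0,k}(j)$ by $D_1$, giving $D_2 = c_1 D_1/c_2$ in one line. The point is that comparing $|I_{k,j}|$ to the return times $r_{0,k}(j)$ (which are already known to be mutually comparable) sidesteps the need to know where $e_k$ sits in the discontinuity filtration.
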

\begin{proof}
Linear recurrence implies that the discontinuities of $T^{r_{0,k}(j)}$ are $c_1/r_{0,k}(j)$ dense and $c_2/r_{0,k}(j)$ separated.
Since $T|I_k$ is continuous on the interior of $I_{k,j}$ and has discontinuities at its endpoints we must have
\[
\frac{c_2}{\max \{ r_{0,k}(j) : 1 \le j \le b \} } \le |I_{k,j}| \le \frac{c_1}{\min \{ r_{0,k}(j) : 1 \le j \le b \} }
\]
so Fact~\ref{fact:returnRatio} implies  $D_2$ can be chosen to be $c_1 D_1/c_2$.
\end{proof}

\begin{fact}
\label{fact:geometricInterval}
There are constants $\rho_1, \rho_2 > 1$ such that $\rho_1 |I_{k,j}| \le |I_k| \le \rho_2 |I_{k,j}|$ for all $k \in \mathbb{N}$ and all $1 \le j \le b$.
In particular $\rho_1 |I_{k+1}| \le |I_k| \le \rho_2 |I_{k+1}|$ for all $k \in \mathbb{N}$.
\end{fact}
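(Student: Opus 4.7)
The statement to prove is Fact~\ref{fact:geometricInterval}, which asserts two–sided bounds between $|I_k|$ and any $|I_{k,j}|$, and then specializes to $j = 1$ to compare $|I_k|$ with $|I_{k+1}|$. The plan is to derive it directly from Fact~\ref{fact:intervalRatio} by summing over the subintervals.

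First I would note that the intervals $I_{k,1},\dots,I_{k,b}$ partition $I_k$, so that
\[
|I_k| = \sum_{i=1}^b |I_{k,i}|.
\]
Fix $j$. By Fact~\ref{fact:intervalRatio} each summand satisfies $|I_{k,j}|/D_2 \le |I_{k,i}| \le D_2|I_{k,j}|$. Summing over $i$ gives
\[
\frac{b}{D_2}\,|I_{k,j}| \;\le\; |I_k| \;\le\; b D_2\,|I_{k,j}|.
\]
The upper bound already gives $\rho_2 := b D_2 > 1$. For the lower bound I would refine the argument slightly so as to actually obtain $\rho_1 > 1$: isolate the term $i = j$ and only apply Fact~\ref{fact:intervalRatio} to the remaining $b-1$ summands, yielding
\[
|I_k| \;=\; |I_{k,j}| + \sum_{i \ne j} |I_{k,i}| \;\ge\; \left(1 + \frac{b-1}{D_2}\right)|I_{k,j}|,
\]
so that $\rho_1 := 1 + (b-1)/D_2 > 1$ works (here we use $b \ge 2$, which holds since we are considering genuine interval exchange transformations with at least two intervals).

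The specialization in the "In particular" clause is immediate: by construction $I_{k+1} = I_{k,1}$, so applying the bound with $j=1$ gives $\rho_1 |I_{k+1}| \le |I_k| \le \rho_2 |I_{k+1}|$. There is no real obstacle here; the only subtlety is arranging the lower bound so that $\rho_1$ is strictly greater than $1$ rather than merely positive, which the isolate-one-term trick handles cleanly and uniformly in $k$ and $j$.
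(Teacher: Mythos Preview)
Your proof is correct and follows essentially the same approach as the paper: both sum the partition $|I_k|=\sum_i |I_{k,i}|$ and use Fact~\ref{fact:intervalRatio}, isolating the $j$th term to get $\rho_1 = 1 + (b-1)/D_2 > 1$. The only cosmetic difference is that the paper also isolates the $j$th term for the upper bound, obtaining the slightly sharper $\rho_2 = 1 + (b-1)D_2$ instead of your $\rho_2 = bD_2$, but either constant works.
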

\begin{proof}
We have
\begin{equation}
\label{eqn:geometricIntervalLength}
|I_k|
=
|I_{k,1}| + |I_{k,2}| + \cdots + |I_{k,b}|
\le
|I_{k,j}| + (b-1) D_2 |I_{k,j}|
=
(1 + (b-1)D_2) |I_{k,j}|
\end{equation}
and
\[
|I_k|
=
|I_{k,1}| + |I_{k,2}| + \cdots + |I_{k,b}|
\ge
|I_{k,j}| + \frac{b-1}{D_2} |I_{k,j}|
=
\left( 1 + \frac{b-1}{D_2} \right) |I_{k,j}|
\]
by Fact~\ref{fact:intervalRatio}. Applying this when $j=1$ we can take $\rho_1 = 1 + (b-1)/D_2$ and $\rho_2 = 1 + (b-1)D_2$.
\end{proof}

\begin{fact}
\label{fact:normBound}
There is a constant $D_3 > 0$ such that $\nbar B_k \nbar_{1} \le D_3$ for all $k$.
\end{fact}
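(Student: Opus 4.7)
The plan is to unwind the definitions and read off the bound directly from Facts~\ref{fact:returnRatio} through \ref{fact:geometricInterval}; this is a short corollary rather than an argument with a real obstacle.

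First I would note that since $B_k$ has non-negative entries, the operator $\ell^1$ norm equals the maximum column sum, so
\[
\nbar B_k \nbar_1 = \max_{1 \le j \le b} \nbar B_k e_j \nbar_1 = \max_{1 \le j \le b} r_{k,k+1}(j).
\]
It therefore suffices to bound the return times $r_{k,k+1}(j)$ uniformly in $k$ and $j$.

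Next I would apply the telescoping inequality \eqref{eqn:returnBreakdown} with $m = 0$ and $\ell = k+1$ to a point $x \in I_{k+1,j}$, which gives
\[
r_{k,k+1}(j) \;\le\; \frac{r_{0,k+1}(x)}{\min \{ r_{0,k}(z) : z \in I_0 \}}.
\]
Linear recurrence in the form \eqref{eqn:linRecIntervals} bounds the numerator by $c_1/|I_{k+1}|$ and the denominator from below by $c_2/|I_k|$, so
\[
r_{k,k+1}(j) \;\le\; \frac{c_1}{c_2} \cdot \frac{|I_k|}{|I_{k+1}|}.
\]
Finally, Fact~\ref{fact:geometricInterval} says $|I_k|/|I_{k+1}| \le \rho_2$, which yields
\[
\nbar B_k \nbar_1 \;\le\; \frac{c_1 \rho_2}{c_2},
\]
so $D_3 = c_1 \rho_2 / c_2$ works for every $k$.

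There is no genuine obstacle here; the only thing to check carefully is the direction of the inequalities in \eqref{eqn:returnBreakdown} and \eqref{eqn:linRecIntervals} so that one lands on an upper bound for $r_{k,k+1}(j)$ rather than a lower bound. Conceptually the statement just says that the return times from one stage of Rauzy induction to the next are bounded because $|I_k|$ and $|I_{k+1}|$ are within a uniform geometric factor of each other (Fact~\ref{fact:geometricInterval}) and linear recurrence pins each $r_{0,k}$ to $1/|I_k|$ up to multiplicative constants.
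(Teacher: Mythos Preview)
Your argument is correct and follows the same route as the paper: bound $r_{k,k+1}(j)$ via \eqref{eqn:returnBreakdown} with $m=0$, then use \eqref{eqn:linRecIntervals} and Fact~\ref{fact:geometricInterval}. The only cosmetic difference is that the paper bounds each individual entry $B_k(i,j)\le r_{k,k+1}(j)$ and then multiplies by $b$ to get the column sum (obtaining $D_3 = b c_1 \rho_2 / c_2$), whereas you use directly that the column sum equals $r_{k,k+1}(j)$ and arrive at the sharper $D_3 = c_1 \rho_2 / c_2$.
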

\begin{proof}
We have
\[
B_k(i,j)
\le
r_{k,k+1}(j)
\le
\frac{r_{0,k+1}(j)}{\min \{ r_{0,k}(l) : 1 \le l \le b \}}
\le
\frac{c_1}{c_2} \frac{|I_{k}|}{|I_{k+1}|}
\le
\frac{c_1}{c_2} \rho_2
\]
by taking $m = 0$ and $l = k+1$ in \eqref{eqn:returnBreakdown} and then using \eqref{eqn:cantThinkOfName} and Fact~\ref{fact:geometricInterval}.
It then follows that $\nbar B_k \nbar_{1} \le b c_1 \rho_2 / c_2$ for all $k$.
\end{proof}

\begin{fact}
\label{fac:productPositive}
There is $r \in \mathbb{N}$ such that $B_k B_{k+1} \cdots B_{k+r}$ is positive for all $k \in \mathbb{N}$.
\end{fact}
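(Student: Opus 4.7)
The plan is to observe that $B_{k,r}(i,j) > 0$ precisely when the $T|I_k$-orbit of $I_{k+r+1,j}$ visits $I_{k,i}$ at least once before returning to $I_{k+r+1}$. Since iterates of $T|I_k$ are exactly those iterates of $T$ that land in $I_k$, and since $I_{k,i} \subset I_k$, this is equivalent to the statement that for any $z \in I_{k+r+1,j}$ the $T$-orbit $z,Tz,\dots,T^{r_{0,k+r+1}(j)-1}z$ meets $I_{k,i}$. Thus it suffices to choose $r$ (uniformly in $k$) large enough that every such $T$-orbit meets every piece $I_{k,i}$.

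For this I would invoke linear recurrence twice. First, a $T$-orbit of length $n$ is $c_1/n$-dense in $[0,1)$, so it meets any subinterval of length greater than $2c_1/n$. Second, the bound \eqref{eqn:linRecIntervals} applied at level $k+r+1$ gives $r_{0,k+r+1}(j) \ge c_2/|I_{k+r+1}|$. Combining, the orbit in question meets every subinterval of length exceeding $2c_1|I_{k+r+1}|/c_2$.

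It remains to compare this density to the minimum piece length $\min_i |I_{k,i}|$. Fact~\ref{fact:intervalRatio} together with $I_{k+1}=I_{k,1}$ yields $\min_i |I_{k,i}| \ge |I_{k+1}|/D_2$, and iterating Fact~\ref{fact:geometricInterval} gives $|I_{k+r+1}| \le |I_{k+1}|/\rho_1^r$. Hence the orbit hits every $I_{k,i}$ as soon as
\[
\frac{2c_1}{c_2}\cdot\frac{|I_{k+1}|}{\rho_1^r} < \frac{|I_{k+1}|}{D_2},
\]
that is, $\rho_1^r > 2c_1 D_2/c_2$. This condition depends only on constants attached to $T$, so any sufficiently large $r$ works uniformly in $k$.

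There is no real obstacle here; the proof is essentially a bookkeeping exercise combining Facts~\ref{fact:returnRatio}--\ref{fact:geometricInterval} with the density half of linear recurrence. The one point that requires a moment of care is the correspondence between orbit visits under $T$ and under $T|I_k$, but this is immediate from the definition of the induced transformation.
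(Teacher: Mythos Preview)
Your proof is correct and follows essentially the same route as the paper: both reduce to showing that a $T$-orbit of length $r_{0,k+r+1}(j)$ is dense enough to hit every $I_{k,i}$, then use \eqref{eqn:linRecIntervals} and the geometric decay of $|I_{k+r+1}|$ from Fact~\ref{fact:geometricInterval} to make this uniform in $k$. The only cosmetic difference is that the paper bounds $|I_{k,i}|$ below via $|I_k|/\rho_2$ (Fact~\ref{fact:geometricInterval}) rather than $|I_{k+1}|/D_2$ (Fact~\ref{fact:intervalRatio}), leading to slightly different but equivalent constants.
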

\begin{proof}
We must produce $r \in \mathbb{N}$ such that, for every $i,j,k$ the $T|I_k$ orbit of $I_{k+r+1,j}$ visits $I_{k,i}$ before returning to $I_{k+r+1}$.
It is enough to find $r$ such that, for every $i,j,k$ the $T$ orbit of any point $x \in I_{k+r+1,j}$ visits $I_{k,i}$ before time $r_{0,k+r+1}(j)$.
By linear recurrence this is the case if $r_{0,k+r+1}(j) |I_{k,i}| \ge c_1$.
Using Fact~\ref{fact:geometricInterval} repeatedly and then \eqref{eqn:linRecIntervals} we have
\[
r_{0,k+r+1}(j) |I_{k,i}|
\ge
r_{0,k+r+1}(j) \frac{|I_k|}{\rho_2}
\ge
r_{0,k+r+1}(j) |I_{k+r+1}| \frac{\rho_1^{r+1}}{\rho_2}
\ge
c_2 \frac{\rho_1^{r+1}}{\rho_2}
\]
and, independent of $i,j,k$, this will be at least $c_1$ if $r$ is large enough.
\end{proof}

\begin{fact}
\label{fac:angleDecay}
There are constants $D_4 \in \mathbb{R}$ and $\gamma < 1$ such that
\[
\Theta(B_{k,r} e_j, B_{k,r} e_\ell) < D_4 \gamma^r
\]
for all $j,\ell,k,r$ where $\Theta$ denotes the angle between two vectors in $\mathbb{R}^b$ and $e_1,\dots,e_b$ is the standard basis of $\mathbb{R}^b$.
\end{fact}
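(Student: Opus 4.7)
The goal is to establish that the matrices $B_{k,r}$, which have non-negative integer entries, act as strong contractions on the projectivization of the positive cone in $\mathbb{R}^b$. This is the setting of the Birkhoff--Hopf theorem on the Hilbert projective metric.

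Let $r_0$ be as in Fact~\ref{fac:productPositive}, so that $M_k := B_k B_{k+1} \cdots B_{k+r_0}$ is strictly positive for every $k$. Since each $B_k$ has non-negative integer entries bounded above by $D_3$ (Fact~\ref{fact:normBound}), each entry of $M_k$ is a positive integer at most $C_0 := b^{r_0} D_3^{r_0+1}$. Consequently the Birkhoff projective diameter
\[
\Delta(M_k) = \log \max_{i,j,p,q} \frac{M_k(i,j)\, M_k(p,q)}{M_k(i,q)\, M_k(p,j)}
\]
is uniformly bounded by some $\Delta_0 \le 2\log C_0$, and the Birkhoff--Hopf theorem supplies a uniform constant $\tau_0 = \tanh(\Delta_0/4) < 1$ satisfying $d_H(M_k u, M_k v) \le \tau_0\, d_H(u,v)$ for all $u,v$ in the interior of the positive cone, with $d_H$ denoting the Hilbert projective metric.

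Given $r \ge r_0$, set $q = \lfloor (r+1)/(r_0+1) \rfloor$ and factor
\[
B_{k,r} = M^{(1)} M^{(2)} \cdots M^{(q)} \cdot R
\]
into $q$ consecutive blocks of $r_0+1$ matrices each (each strictly positive) followed by a remainder $R$ consisting of fewer than $r_0+1$ of the $B_k$. Applied to $e_j$, the vector $R e_j$ is non-negative and nonzero, so $M^{(q)} R e_j$ and $M^{(q)} R e_\ell$ both lie in the interior of the positive cone as non-negative combinations of the columns of $M^{(q)}$; in particular their Hilbert distance is bounded by $\Delta(M^{(q)}) \le \Delta_0$. Each subsequent multiplication by $M^{(i)}$ with $i < q$ then contracts the Hilbert distance by $\tau_0$, so
\[
d_H(B_{k,r} e_j, B_{k,r} e_\ell) \le \tau_0^{q-1} \Delta_0,
\]
which is exponential decay in $r$ uniform in $k, j, \ell$.

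To pass from Hilbert distance to angle, normalize the vectors to the probability simplex: a Hilbert bound $d_H(\hat u, \hat v) \le \epsilon$ forces every ratio $\hat u_i/\hat v_i$ to lie in $[e^{-\epsilon}, e^{\epsilon}]$, yielding $\|\hat u - \hat v\|_2 \le 2\sinh(\epsilon)$. Because the normalized vectors $B_{k,r} e_j / \|B_{k,r} e_j\|_1$ for $r \ge r_0$ all sit within a fixed Hilbert neighborhood of one another, the Hilbert metric is Lipschitz comparable to Euclidean distance on that neighborhood, and Euclidean closeness of probability vectors bounds the angle. This gives $\Theta(B_{k,r} e_j, B_{k,r} e_\ell) \le D_4 \gamma^r$ with $\gamma = \tau_0^{1/(r_0+1)}$ and an appropriate constant $D_4$ depending on $b$, $D_3$, and $r_0$. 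The main technical point is the uniformity of $\tau_0$ in $k$, which is secured by combining the uniform positivity from Fact~\ref{fac:productPositive} with the uniform entry bound from Fact~\ref{fact:normBound}; the rest is a routine application of Birkhoff--Hopf.
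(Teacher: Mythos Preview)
Your proof is correct and follows essentially the same route as the paper: both invoke the Birkhoff--Hopf contraction of the Hilbert projective metric by the strictly positive blocks guaranteed by Fact~\ref{fac:productPositive}, then iterate to obtain exponential decay. Your version is more explicit on two points the paper leaves implicit --- using Fact~\ref{fact:normBound} to secure a uniform (in $k$) contraction ratio, and carrying out the passage from Hilbert distance to Euclidean angle --- but the argument is the same.
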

\begin{proof}
Because positive matrices of a fixed size act as definite contractions in the Hilbert projective metric, Fact~\ref{fac:productPositive} implies that there exists $\rho<1$ so that $\Theta(B_{k,k+r}v,B_{k,k+r}w)<\rho \Theta(v,w)$ for any $v,r \in \mathbb{R}_+^b$. Iterating we have $\Theta(B_{k,k+nr}v,B_{k,k+nr}w)<\rho^n$. 
Letting $\gamma=\rho^{\frac 1 r}$ and choosing $D_4=\rho^{-1}$ we obtain the fact.
\end{proof}

We use these facts to prove the following lemmas.

\begin{lemma}
There exists $0<\zeta_2$ and $E_2$ such that 
\[
\left| \sum_{n=0}^{N-1} 1_{I_k}(T^nx)- 1_{I_k}(T^n y) \right|
\leq
E_2\max\{1,(|I_k|N)^{\zeta_2}\}
\]
for all $x,y \in [0,1)$ and all $k,N \in \mathbb{N}$.
\end{lemma}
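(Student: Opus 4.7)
The plan is to prove this quantitative equidistribution estimate by tower decomposition at an intermediate level together with the projective contraction of the cocycle $(B_k)$ from Fact~\ref{fac:angleDecay}. Set $M:=N|I_k|$. In the trivial regime $M\le 1$, linear recurrence makes the first $N$ iterates of $T$ at least $c_2/N$-separated, so each of the two Birkhoff sums is at most $N|I_k|/c_2+O(1)=O(1)$ and the stated bound holds with any $\zeta_2>0$. Assume from now on that $M$ is large.

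Choose an intermediate level $\ell>k$ with $\ell-k\asymp\log M$ so that $|I_k|/|I_\ell|\asymp M^\delta$ for some $\delta\in(0,1)$ to be optimized; by Fact~\ref{fact:geometricInterval} this is a consistent choice. Decompose the $T$-orbit $x,Tx,\ldots,T^{N-1}x$ into an initial partial piece, a sequence of complete return blocks to $I_\ell$, and a final partial piece; each complete block that begins in $I_{\ell,j}$ contributes exactly $r_{k,\ell}(j)=\nbar B_{k,\ell-k-1}e_j\nbar_1$ visits to $I_k$, while each partial piece contributes at most $\max_j r_{k,\ell}(j)\lesssim|I_k|/|I_\ell|\asymp M^\delta$ by Facts~\ref{fact:returnRatio} and \ref{fact:geometricInterval}. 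Writing $c_j(x)$ for the number of complete blocks of the $x$-orbit starting in $I_{\ell,j}$, and similarly $c_j(y)$, one obtains
\[
\sum_{n=0}^{N-1}1_{I_k}(T^nx)-\sum_{n=0}^{N-1}1_{I_k}(T^ny)=\sum_{j=1}^b(c_j(x)-c_j(y))\,r_{k,\ell}(j)+O(M^\delta).
\]

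For the main term, apply Fact~\ref{fac:angleDecay} to $B_{k,\ell-k-1}$ and, using Fact~\ref{fac:productPositive} to see that the transpose is eventually positive, also to $B_{k,\ell-k-1}^\top$. This factorizes the return-time vector as $r_{k,\ell}(j)=\alpha w_j(1+O(\gamma^{\ell-k}))$ for a probability vector $(w_j)$ and a common amplitude $\alpha=\alpha(k,\ell)$ pinned down by Kac's identity $\sum_j|I_{\ell,j}|r_{k,\ell}(j)=|I_k|$. Substituting, and using the trivial linear-recurrence bound $|c_j(x)-c_j(y)|\le 2(N|I_{\ell,j}|/c_2+1)$ together with the cancellation $\sum_j|I_{\ell,j}|w_j=|I_k|/\alpha$, controls the main term by $O(\gamma^{\ell-k}M+M^\delta)$. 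Balancing the decaying spectral contribution $\gamma^{\ell-k}M$ against the geometrically growing partial-block error $M^\delta$ through the choice of $\ell-k\asymp\log M$ yields some $\zeta_2\in(0,1)$ for which both are dominated by $M^{\zeta_2}$, completing the proof.

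The main obstacle is the spectral factorization step. Fact~\ref{fac:angleDecay} as stated only gives pairwise closeness of columns of $B_{k,\ell-k-1}$ in projective distance, so to extract the clean factorization $r_{k,\ell}(j)\approx\alpha w_j$ of the column $\ell^1$-norms one needs to apply the contraction on both sides (Fact~\ref{fac:productPositive} makes this legal) and combine it with Fact~\ref{fact:returnRatio} to control the relative magnitudes of these norms. Threading the resulting estimates through while keeping the exponent $\zeta_2$ uniform in $k$ and $N$, and verifying that the balancing of spectral versus partial-block errors can be done uniformly in the data, is the quantitative heart of the argument.
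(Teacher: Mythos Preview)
Your overall strategy --- pass to an intermediate level $\ell$, decompose into complete return blocks plus two partial pieces, and use the projective contraction of the cocycle to control the block contributions --- is exactly the paper's approach. The trivial regime and the boundary estimate $O(M^\delta)$ are fine.

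The gap is in your ``cancellation'' step. After writing the main term as $\sum_j (c_j(x)-c_j(y))\,r_{k,\ell}(j)$ and factorizing $r_{k,\ell}(j)=\alpha w_j(1+O(\gamma^{\ell-k}))$, you are left with a leading piece $\alpha\sum_j (c_j(x)-c_j(y))\,w_j$. The only tools you invoke are the trivial bound $|c_j(x)-c_j(y)|\lesssim N|I_{\ell,j}|$ and the Kac identity $\sum_j |I_{\ell,j}|\,w_j=|I_k|/\alpha$. Feeding the trivial bound in gives
\[
\Big|\alpha\sum_j (c_j(x)-c_j(y))\,w_j\Big|\;\lesssim\;\alpha\sum_j N|I_{\ell,j}|\,w_j\;=\;N|I_k|\;=\;M,
\]
not $M^\delta$. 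Kac only pins down the size of $\alpha$; it produces no sign cancellation in $\sum_j (c_j(x)-c_j(y))\,w_j$, and there is no a~priori relation between the counts $c_j(x)$ and the lengths $|I_{\ell,j}|$ beyond the crude linear-recurrence bound you already used.

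What actually makes the argument close is a different constraint you never invoke: the total time spent in complete blocks is $\sum_j c_j(x)\,r_{0,\ell}(j)=N+O(\max_j r_{0,\ell}(j))$, and the same for $y$, so $\sum_j (c_j(x)-c_j(y))\,r_{0,\ell}(j)=O(1/|I_\ell|)$. Column angle decay for $B_{k}\cdots B_{\ell-1}$ (not its transpose) then gives $r_{k,\ell}(j)=\lambda\,r_{0,\ell}(j)(1+O(\gamma^{\ell-k}))$ with $\lambda\asymp|I_k|$, and substituting yields
\[
\sum_j (c_j(x)-c_j(y))\,r_{k,\ell}(j)=\lambda\cdot O(1/|I_\ell|)+O(\gamma^{\ell-k})\cdot\lambda\sum_j|c_j(x)-c_j(y)|\,r_{0,\ell}(j)=O(M^\delta)+O(\gamma^{\ell-k}M).
\]
This is the cancellation you need. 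The paper packages the same idea slightly differently: rather than comparing $x$ and $y$, it shows that for every $x$ the ratio (visits to $I_k$)/(time) over each complete block is $\lambda(1+O(\gamma^{\ell-k}))$ independently of the block type, so $\frac{1}{N}\sum_{n<N}1_{I_k}(T^nx)$ is close to the $x$-independent value $\lambda$. Your transpose contraction and the vector $(w_j)$ are not needed.
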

\begin{proof} 
Let $r=\frac 1 2 c\log(N|I_k|)$.  We first consider $x \in I_{k+r}$.
Let $M$ be the maximal number so that $M < N$ and $T^Mx=(T|I_{k+r})^ax \in I_{k+r}$.
Put
\[
\upsilon = \sum_{i=0}^M 1_{I_k} (T^ix) = |C_{i_1}(B_k \cdots B_{k+r})|+ \cdots +|C_{i_{a-1}}(B_k \cdots B_{k+r})|
\]
where $i_j$ is defined by $(T|I_{k+r})^jx\in I_{k+r,i_j}$.
By Fact~\ref{fac:angleDecay}
\[
\left| \frac{C_j(B_k \cdots B_{k+r})}{|C_j(B_k \cdots B_{k+r})|} - \frac{\upsilon}{M} \right|
\]
is exponentially small (in $\log(N|I_k|)$) for each $j$.
Now 
\[
\upsilon \leq \sum_{i=0}^{N-1} 1_{I_k}(T^ix)\leq |C_{\max}(B_k \cdots B_{k+r})| + \upsilon
\]
and by our choice of $r$ we have that $|C_{\max}(B_k \dots B_{k+r})|/M$ is exponentially small in $\log(N|I_k|)$ so
\[
\left| \frac 1 N\sum_{i=0}^{N-1} 1_{I_k}(T^ix)-\frac{C_j(B_k \cdots B_{k+r})}{|C_j(B_k \cdots B_{k+r})|} \right|
\]
is exponentially small, establishing the lemma for $x \in I_{k+r}$.
A general $x \in [0,1)$ gives another error of at most $|C_{\max}(B_k \cdots B_{k+r})|$.
\end{proof}

The proof of the next lemma is similar and omitted. 

\begin{lemma}
\label{lem:hitDifference3}
There is $0 < \zeta_3 < 1$ and $E_3 > 0$ such that
\[
\left| \sum_{n=0}^{N-1} 1_{I_{k,j}} (T^n x) - 1_{I_{k,j}} (T^n y) \right| < E_3 \max \{ 1, ( |I_{k,j}| N )^{\zeta_3} \}
\]
for all $k,j,N,x,y$.
\end{lemma}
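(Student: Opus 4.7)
The plan is to mimic the proof of the previous lemma, replacing $1_{I_k}$ by $1_{I_{k,j}}$ throughout. The key observation is that the matrix $B_{k,r} = B_k B_{k+1}\cdots B_{k+r}$ already records visits to each individual subinterval $I_{k,j}$ via its $j$th row entries, not just to their union $I_k$ via column sums.

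Concretely I would set $r = \tfrac{1}{2}c\log(N|I_{k,j}|)$ for a suitably small constant $c > 0$ and first consider $x \in I_{k+r+1}$. Let $M$ be the largest time $\le N$ with $T^M x \in I_{k+r+1}$, let $a$ be the number of returns of $x$ to $I_{k+r+1}$ up to $T$-time $M$, and let $l_s$ be the index with $(T|I_{k+r+1})^s x \in I_{k+r+1, l_s}$. The identity
\[
\sum_{n=0}^{M-1} 1_{I_{k,j}}(T^n x) = \sum_{s=0}^{a-1} (B_{k,r})(j, l_s)
\]
takes the place of the column-sum identity used in the previous lemma. By Fact~\ref{fac:angleDecay} the probability vectors $B_{k,r} e_l / |C_l(B_{k,r})|$ are within angle $D_4 \gamma^r$ of each other as $l$ varies, so their $j$th coordinates satisfy $|(B_{k,r})(j,l)/|C_l(B_{k,r})| - \lambda_j| \le D_4 \gamma^r$ for some $\lambda_j$ independent of $l$. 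Summing against $s$ gives
\[
\sum_{s=0}^{a-1} (B_{k,r})(j, l_s) = \lambda_j \sum_{s=0}^{a-1} |C_{l_s}(B_{k,r})| + O(\gamma^r N),
\]
and the same identity at $y$, subtracted from this, leaves a dominant difference of size $\lambda_j$ times the $I_k$-visit difference (which the argument of the previous lemma already controls by a power of $|I_k|N \asymp |I_{k,j}|N$), plus $O(\gamma^r N)$ from angle decay, plus $O(|C_{\max}(B_{k,r})|)$ from incomplete returns between $T$-times $M$ and $N$ and from $x,y$ not initially in $I_{k+r+1}$.

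Finally I would use Fact~\ref{fact:returnRatio} and Fact~\ref{fact:geometricInterval} (which bound $|C_{\max}(B_{k,r})|$ by a multiple of $\rho_2^{r+1}$) together with Fact~\ref{fact:intervalRatio} and Fact~\ref{fact:geometricInterval} (which give $|I_{k,j}| \asymp |I_k|$ uniformly in $j$ and $k$) to turn each of the three error terms into a power of $|I_{k,j}|N$ strictly smaller than $(|I_{k,j}|N)^1$, and then choose $\zeta_3 < 1$ and $E_3$ large enough to absorb them. The main obstacle will be the bookkeeping: one has to verify that the cumulative angle-decay error across up to $N$ returns and the boundary error $O(\rho_2^{r+1})$ remain dominated by $(|I_{k,j}|N)^{\zeta_3}$ for a single choice of $r$, uniformly in $k$ and $j$.
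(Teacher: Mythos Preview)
Your proposal is correct and follows essentially the same approach as the paper, which simply states that the proof is similar to the preceding lemma and omits it. You have correctly identified the one substantive change needed: the $j$th row entries $B_{k,r}(j,l)$ of the product matrix count visits to the single subinterval $I_{k,j}$, whereas the column sums $|C_l(B_{k,r})|$ used previously count visits to all of $I_k$, and Fact~\ref{fac:angleDecay} controls both quantities in exactly the same way. Your reduction of the $I_{k,j}$-visit difference to $\lambda_j$ times the $I_k$-visit difference (then invoking the previous lemma) is a clean way to organise the bookkeeping, though a direct adaptation---comparing $\tfrac{1}{N}\sum 1_{I_{k,j}}(T^n x)$ to the reference value $B_{k,r}(j,l)/|C_l(B_{k,r})|$ for each of $x$ and $y$ separately, as the previous proof does for $I_k$---would work equally well and is likely what the authors had in mind by ``similar''.
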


\begin{corollary}\label{cor:hitDifference}
There is $0 < \zeta_3 < 1$ and $E_3 > 0$ such that
\[
\left| \sum_{n=0}^{N-1} 1_{T^s I_{k,j}} (T^n x) - 1_{T^s I_{k,j}} (T^n y) \right| < E_3 \max \{ 1, ( |I_{k,j}| N )^{\zeta_3} \}
\]
for all $k,j,N,x,y,s$.
\end{corollary}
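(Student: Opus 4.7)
The plan is to deduce Corollary~\ref{cor:hitDifference} from Lemma~\ref{lem:hitDifference3} via a simple change of variables. Since every interval exchange transformation is a bijection on $[0,1)$ away from a finite set, each iterate $T^s$ has an inverse $T^{-s}$ defined off a finite set of points, and both $T^s$ and $T^{-s}$ preserve Lebesgue measure. In particular $|T^s I_{k,j}| = |I_{k,j}|$, so the right-hand side of the claimed inequality equals the right-hand side of the inequality in Lemma~\ref{lem:hitDifference3}.

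The key identity is purely set-theoretic: for any $z \in [0,1)$ that is not on the boundary of $T^s I_{k,j}$,
\[
1_{T^s I_{k,j}}(z) = 1_{I_{k,j}}(T^{-s} z).
\]
Specializing $z = T^n x$ gives $1_{T^s I_{k,j}}(T^n x) = 1_{I_{k,j}}(T^{n-s} x)$, valid for all but finitely many $x$. I would then set $x' = T^{-s} x$ and $y' = T^{-s} y$ and rewrite
\[
\sum_{n=0}^{N-1} 1_{T^s I_{k,j}}(T^n x) = \sum_{n=0}^{N-1} 1_{I_{k,j}}(T^{n-s} x) = \sum_{n=0}^{N-1} 1_{I_{k,j}}(T^n x'),
\]
and similarly for $y$. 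At this point Lemma~\ref{lem:hitDifference3}, applied to the pair $(x', y')$ in place of $(x, y)$, immediately gives
\[
\left| \sum_{n=0}^{N-1} 1_{T^s I_{k,j}}(T^n x) - 1_{T^s I_{k,j}}(T^n y) \right| = \left| \sum_{n=0}^{N-1} 1_{I_{k,j}}(T^n x') - 1_{I_{k,j}}(T^n y') \right| < E_3 \max\{1, (|I_{k,j}| N)^{\zeta_3}\},
\]
with the same constants $E_3$ and $\zeta_3$, which is precisely the desired bound.

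There is no real obstacle here; the only point that requires a brief comment is that the identity $1_{T^s I_{k,j}}(T^n x) = 1_{I_{k,j}}(T^{n-s} x)$ fails on the finite set of $x$ for which some $T^n x$ sits on the boundary of $T^s I_{k,j}$ (equivalently, $T^{n-s} x$ is an endpoint of $I_{k,j}$). Since this exceptional set is finite and each indicator contributes at most one, the finitely many bad $x$ can be absorbed into the constant $E_3$ exactly as in the proof of Lemma~\ref{lem:hitDifference3} — indeed, the same issue is already present there, so no new ingredient is needed.
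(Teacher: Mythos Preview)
Your proposal is correct and follows exactly the paper's approach: the paper's entire proof is the single sentence ``This is immediate from Lemma~\ref{lem:hitDifference3} because $x$ and $y$ therein can be any point,'' which is precisely the substitution $x' = T^{-s}x$, $y' = T^{-s}y$ that you spell out. Your caveat about exceptional boundary points is harmless but in fact unnecessary here, since with the half-open interval convention the interval exchange $T$ is a genuine bijection of $[0,1)$ and the identity $1_{T^s I_{k,j}}(z) = 1_{I_{k,j}}(T^{-s}z)$ holds for every $z$.
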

\begin{proof}
This is immediate from Lemma~\ref{lem:hitDifference3} because $x$ and $y$ therein can be any point.
\end{proof}

With these lemmas we can prove Theorem~\ref{thm:powerSaving}.

\begin{proof}[Proof of Theorem~\ref{thm:powerSaving}]
Fix a mean-zero step function \eqref{eqn:simpleFunction}.
Set $a_0 = 0$ and $a_{d+1} = 1$.
Put $a_i = x_1 + \cdots + x_i$ for all $1 \le i \le d$.
For each $k \in \mathbb{N}$ the partition
\[
\mathcal{P}_k = \{ T^n I_{k,j} : 0 \le n < r_{0,k}(j), 1 \le j \le b \}
\]
of $[0,1)$ is $|I_k|$ dense.

Let $\zeta_3$ be as in Corollary~\ref{cor:hitDifference} and fix $\frac{1}{2} + \frac{\zeta_3}{2} < \gamma < 1$.
Fix $N \in \mathbb{N}$.
For each $1 \le i \le d+1$ choose $k$ minimal with $|I_k| \sqrt{N} < \sqrt{x_i}$.
Therefore $\sqrt{x_i} \le \rho_2 |I_k| \sqrt{N}$ by Fact~\ref{fact:geometricInterval}.
Write the interval $[a_{i-1},a_i)$ as a union of at most $x_i/|I_k|$ intervals from $\mathcal{P}_k$ together with an interval of length at most $|I_k|$ at each end.
Fix $x,y \in [0,1)$.
By estimating the hits of $x$ and $y$ to the end intervals using linear recurrence, and by comparing hits of $x$ and $y$ to the other intervals using Corollary~\ref{cor:hitDifference}, we obtain
\begin{align*}
\left| \sum_{n=0}^{N-1} 1_{[a_{i-1},a_i)} (T^n x) - \sum_{n=0}^{N-1} 1_{[a_{i-1},a_i)} (T^n y) \right|
&
\le
\frac{4}{c_2} \sqrt{N} \sqrt{x_i} + \frac{x_i}{|I_k|} E_3 \max \{1, (|I_k| N)^{\zeta_3} \}
\\
&
\le
\frac{4 \sqrt{N}}{c_2}
+
E_3 \rho_2 \sqrt{N}
\max \left\{ 1, N^{\frac{\zeta_3}2} \right\}
\end{align*}
for all $1 \le i \le d+1$ using $\sqrt{x_i} < 1$.
Combined with \eqref{eqn:simpleFunction} gives \eqref{eqn:powerSaving} by our choice of $\gamma$.
\end{proof}

\printbibliography

\end{document}